\NewDocumentCommand{\dgal}{sO{}m}{%
  \IfBooleanTF{#1}
    {\dgalext{#3}}
    {\dgalx[#2]{#3}}%
}
\NewDocumentCommand{\dgalext}{m}{%
  \sbox0{%
    \mathsurround=0pt % just for safety
    $\left\{\vphantom{#1}\right.\kern-\nulldelimiterspace$%
  }%
  \sbox2{\{}%
  \ifdim\ht0=\ht2
    \{\kern-.45\wd2 \{#1\}\kern-.45\wd2 \}%
  \else
    \left\{\kern-.5\wd0\left\{#1\right\}\kern-.5\wd0\right\}%
  \fi
}
\NewDocumentCommand{\dgalx}{om}{%
  \sbox0{\mathsurround=0pt$#1\{$}%
  \sbox2{\{}%
  \ifdim\ht0=\ht2
    \{\kern-.45\wd2 \{#2\}\kern-.45\wd2 \}%
  \else
    \mathopen{#1\{\kern-.5\wd0 #1\{}
    #2
    \mathclose{#1\}\kern-.5\wd0 #1\}}
  \fi
}
\newcommand{\sigmab}{\mbox{\boldmath$\sigma$}}
\font\msbm=msbm10
\newcommand{\R}{\hbox{{\msbm \char "52}}}
\definecolor{otherblue}{rgb}{0,0.3,0.6}
\def\rbl#1{{\textcolor{black}{#1}}}
\def\rblx#1{{\textcolor{black}{#1}}}
\newcommand{\M}{Q}
\begin{document}

\markboth{A.\ KHAN, C.\  E.\ POWELL and D.\  J.\  SILVESTER}
{Robust  Error Estimation for Nearly Incompressible Elasticity}

%%%%%%%%%%%%%%%%%%% Publisher's Area please ignore %%%%%%%%%%%%%%%%%%%%%%%
%
\catchline{}{}{}{}{}
%
%%%%%%%%%%%%%%%%%%%%%%%%%%%%%%%%%%%%%%%%%%%%%%%%%%%%%%%%%%%%%%%%%%%%%%%%%%

\title{ROBUST ERROR ESTIMATION FOR  \rbl{LOWEST-ORDER} APPROXIMATION OF NEARLY INCOMPRESSIBLE ELASTICITY\footnote{This work was supported 
by  EPSRC grant EP/P013317.}}

\author{ARBAZ KHAN}

\address{School of Mathematics, University of Manchester, UK,\footnote{University of Manchester, Oxford Road, Manchester, UK, M13 9PL.}\\
arbaz.khan@manchester.ac.uk}

\author{CATHERINE E. POWELL}
\address{School of Mathematics, University of Manchester, UK,$^{\dagger}$\\
c.powell@manchester.ac.uk}

\author{DAVID J. SILVESTER}
\address{School of Mathematics, University of Manchester, UK,$^{\dagger}$\\
d.silvester@manchester.ac.uk}

\maketitle

\begin{history}
\received{(Day Month Year)}
\revised{(Day Month Year)}
%\accepted{(Day Month Year)}
\comby{(xxxxxxxxxx)}
\end{history}

\begin{abstract} We consider so-called Herrmann and Hydrostatic mixed formulations of classical linear elasticity and analyse the error associated with locally stabilised $\bm{P}_1$--$P_0$ finite element approximation.  First, we prove a stability estimate for the discrete problem and  establish an a priori estimate for the associated energy error.  Second, we  consider a residual-based a posteriori error estimator as well as a local Poisson problem estimator. We establish bounds for the energy error that are independent of the Lam\'{e}  coefficients and prove that the estimators are robust in the incompressible limit.  
A key  issue to be addressed is the requirement for pressure stabilisation. 
Numerical results are presented that validate the theory. The software used is available online. 
\end{abstract}

\keywords{Error analysis; linear elasticity; mixed finite elements; a posteriori error estimation.}

\ccode{AMS Subject Classification: 65N30, 65N15. }

\section{Introduction}\label{sec11}

Our starting point is the  classical  linear boundary value problem modelling the 
deformation of a homogeneous isotropic elastic body,
\begin{subequations}  \label{os1}
\begin{align}  \label{os1a}
 -\nabla\cdot\sigmab& =\bm{f} \quad  \mbox{in } \Omega &&\hspace*{-24pt}(\mbox{equilibrium of forces}),\\
\bm{u} &= {\bm{g} \quad \mbox{on } \partial \Omega}  &&\hspace*{-24pt}(\mbox{essential boundary condition}),
% \sigmab \bm{n}&=\bm{0} \quad \mbox{on } \Gamma_N &&\hspace*{-24pt}(\mbox{natural boundary condition}),
  \label{os1c}
\end{align}
\end{subequations}
where $\Omega \subset \mathbb{R}^{2}$ is a bounded Lipschitz polygon.
%\rrd{and we assume $\int_{\partial \Omega}\bm{g} \cdot \bm{n} = 0 $}. 
Here,   \rbl{the deformation} is written in terms of the 
stress tensor $\sigmab : \R^2\rightarrow \R^{2\times 2}$ and the body force 
$\bm{f}: \R^2\rightarrow \R^{2}$, where 
 \begin{align*}
\sigmab=2 \mu \bm{\varepsilon}(\bm{u})+\lambda ({\nabla\cdot \bm{u}}){\bm{I}},
\end{align*}
${\bm{I}}$ is the $2\times 2$ identity matrix,   $\bm{\varepsilon} : \R^2\rightarrow \R^{2\times 2}$  is the strain 
tensor, $\bm{u}: \R^2\rightarrow \R^{2}$ is the displacement, and  $\bm{\varepsilon}(\bm{u})=\frac{1}{2}(\nabla \bm{u}+(\nabla \bm{u})^{\top})$. The Lam\'{e} coefficients $\mu$ and $\lambda$ satisfy $0<\mu_1<\mu<\mu_2 <\infty$ 
and $0<\lambda<\infty$ and can be written in 
terms of the Young's modulus $E$ and the Poisson ratio $\nu$ as 
\begin{align*}
\mu=\frac{E}{2(1+\nu)}, \quad \lambda=\frac{E\nu}{(1+\nu)(1-2\nu)}.
\end{align*}
The coefficient $\lambda$ becomes unbounded in the incompressible limit $\nu \to 1/2$, leading to  the 
well-known phenomenon of locking for standard finite element methods. A popular remedy is to introduce 
an additional unknown, \rbl{rewrite} \eqref{os1a}--\eqref{os1c} as a system and then apply an appropriate 
\emph{mixed} finite element method.  

We consider mixed approximation methods that are robust with respect to the Lam\'{e} coefficients which arise
from the {\it Herrmann} or {\it Hydrostatic} formulations \cite{DR,RLH} of (\ref{os1a})--(\ref{os1c}). Introducing $p=-\kappa \nabla \cdot \mathbf{u}$  \rbl{we rewrite}  the problem as
\begin{subequations} \label{os2a}
\begin{align}
 -\nabla\cdot\sigmab& =\bm{f} \quad\mbox{in } \Omega, \\
 \nabla\cdot\bm{u}+\frac{p}{\kappa} &=0\quad\mbox{in } \Omega,\\
 \bm{u}&= {\bm{g}\quad \mbox{on } \partial\Omega},
 %\sigmab \bm{n}&= {\bm{0}} \quad\mbox{on }\Gamma_N, \label{os2b}
\end{align}
\end{subequations}
where either $\kappa=\lambda$ (in the Herrmann formulation) or $\kappa=\mu+\lambda$ (in the Hydrostatic formulation in two dimensions). The stress tensor can then be written as 
\begin{align} \label{stress-def}
\sigmab(\bm{u}, p) = \left\{\begin{array}{ll}
2 \mu \bm{\varepsilon}(\bm{u})- p{\bm{I}} & \quad \mbox{ (Herrmann)},\\
2 \mu (\bm{\varepsilon}(\bm{u})-\frac{1}{2} (\nabla \cdot \bm{u}) \bm{I})- p{\bm{I}} & \quad \, \mbox{(Hydrostatic)}.
\end{array}\right.
\end{align}

There is an extensive literature on finite element approximation of elasticity problems; see Boffi et al.\cite{DFM} and Hughes\cite{TJH} for a comprehensive overview and  \rbl{Houston et al.\cite{PDT} and Kouhia \& Stenberg\cite{KS}
for specific details}.
In Ref.~\refcite{KPS}, the authors provide a posteriori error analysis for conforming mixed finite element approximations of the Herrmann formulation using \rbl{stable} rectangular elements. \rbl{A variety of} local problem error estimators for the energy error are considered and proved to be robust \rbl{when} $\nu \to 1/2$.  Those results can be extended to the Hydrostatic formulation 
\rbl{whenever} the chosen finite element spaces satisfy \rbl{minimal conditions, as discussed by Boffi \& Stenberg\cite{DR}}. 
\rbl{In this work, we extend the analysis in  Ref.~\refcite{KPS} to cover the {\it lowest-order} $\bm{P}_{1}$--$P_{0}$ 
approximation defined on  triangular elements. {An important}  issue that will be addressed is the requirement
for pressure stabilisation. While pressure stabilisation of the lowest order mixed methods for
the Stokes equations has been extensively studied (for example by
Dohrmann \& Bochev\cite{CP},  Burman \& Fern{\' a}ndez\cite{BF} and Barrenechea \& Valentin\cite{BV})
the application of stabilised methods to elasticity  equations appears to be  a  new development.}

%The extra stabilisation term and the use of triangular elements makes the analysis more challenging.
% means that the results from Khan et al.\cite{KPS} cannot be immediately applied. 

In  Section~\ref{weak_stuff} we review the weak formulation of \eqref{os2a}. In Section~\ref{Hdivmethsec}, 
we discuss $\bm{P}_{1}$--$P_{0}$ approximation, \rbl{review our} local stabilisation strategy and
\rbl{establish an} a priori error bound. 
 \rbl{The stabilisation strategy that is adopted was developed in Refs.}~\refcite{DD,ks92} \rbl{in the context of
 the Stokes equations}. 
\rbl{The distinctive feature of this contribution is the identification of a suitable  energy norm---which 
removes the requirement to  specify (or ``tune'') a stabilisation parameter.} 
In \rbl{Section~\ref{apost}} we discuss a \rbl{conventional} residual-based a posteriori error estimator  and 
\rbl{we introduce} a local Poisson problem estimator.  Both estimators are shown to 
be {\it robust} in the sense that the material parameters do not appear in the error bounds. 
This \rbl{robustness is significantly} more challenging to achieve than for  the Stokes problem, which only
\rbl{involves a single (viscosity) parameter.} \rbl{Some numerical results that reinforce the theory are 
discussed  in  Section~\ref{Numres}.}
\rbl{In the rest of the paper we will use the symbols $\lesssim$ and $\gtrsim$ to denote bounds that are 
valid up to positive constants  that are independent of the Lam\'{e}  coefficients and the mesh  parameters.}

\section{Weak Formulation}\label{weak_stuff}

Our notation is standard: $H^s(\omega)$ denotes the usual  Sobolev space  with norm $||\cdot||_{s,\omega}$ for $s\ge0$. When  $\omega=\Omega$, we use $||\cdot||_{s}$ instead of
$||\cdot||_{s,\Omega}$ and we denote vector-valued Sobolev spaces by boldface letters
$\bm{H}^{s}(\omega)=\bm{H}^{s}(\omega;\R^2)$. We also define
\begin{align*}
\bm{H}^1_E(\Omega):=\bigl\{\bm{v}\in \bm{H}^1(\Omega)
                    \;\big|\; \bm{v}|_{\partial\Omega}=\bm{g}\bigr\}, \quad
\bm{H}^{1\over 2}(\partial \Omega):=\bigl\{\bm{v} \, |\, \bm{v}=\bm{u}|_{\partial\Omega}, \bm{u}\in \bm{H}^1(\Omega)\bigr\} ,
\end{align*} 
and the  test spaces 
${\bm{V}=\bm{H}_{0}^{1}(\Omega) := \bigl\{\bm{v}\in \bm{H}^1(\Omega) \;\big|\; \bm{v}|_{\partial\Omega}=\bm{0} \bigr\}}$ and
$\M=\rbl{L^{2}(\Omega)}$.
%$$\rrd{\M=L_{0}^{2}(\Omega):=\left\{ q \in L^{2}(\Omega),  \int_{\Omega} q = 0 \right\}.} $$

The standard weak formulation of (\ref{os2a})  is: 
 find $(\bm{u},p)\in \bm{H}^1_E\times \M$ such that 
 \begin{subequations} \label{scm11a}
\begin{align}%\label{scm11a}
a(\bm{u},\bm{v})+b(\bm{v},p)&=f(\bm{v})\quad \forall \bm{v}\in \bm{V}, \\
b(\bm{u},q)-c(p,q)&=0\quad\quad\,\forall  q\in  \M, \label{scm11b}
%2\mu\int_{\Omega}\bm{\varepsilon}(\bm{u}):\bm{\varepsilon}(\bm{v})-\int_{\Omega} p \nabla\cdot \bm{v} &=\int_{\Omega}\bm{f}\,\bm{v}, \quad \forall \quad \bm{v}\in\bm{H}^1_{E_0}, \\
%-\int_{\Omega} q \nabla\cdot \bm{u} -\frac{1}{\lambda}\int_{\Omega}p q
%&=0 \quad\forall \quad q\in \bm{P},
\end{align}
\end{subequations}
where
$$ b(\bm{v},p)=-\int_{\Omega} p \nabla\cdot \bm{v},\quad c(p,q)=\frac{1}{\kappa}\int_{\Omega} pq,\quad  f(\bm{v})=\int_{\Omega}\bm{f}\,\bm{v},$$
and either
\begin{align*}
a(\bm{u},\bm{v})=a_{H}(\bm{u}, \bm{v}) =  2\mu\int_{\Omega}\bm{\varepsilon}(\bm{u}):\bm{\varepsilon}(\bm{v}),
 \end{align*} 
(in the Herrmann formulation) or 
\begin{align*}
a(\bm{u},\bm{v})= a_{S}(\bm{u}, \bm{v}) =  2\mu\left(\int_{\Omega}\bm{\varepsilon}(\bm{u}):\bm{\varepsilon}(\bm{v})-\frac{1}{2}\int_{\Omega}(\nabla \cdot \bm{u})(\nabla \cdot \bm{v}) \right)
 \end{align*} 
(in the Hydrostatic formulation). Note that, where it is necessary to make a distinction, we will use the notation $a_{H}(\cdot, \cdot)$ and $a_{S}(\cdot, \cdot)$, but where a stated result holds for both, we will simply use $a(\cdot, \cdot)$. We assume $\bm{f}\in ( L^{2}(\Omega))^2$ and that $\bm{g}\in \bm{H}^{1\over 2}(\partial\Omega)$ is a polynomial of degree at most one in each component so that no error  is incurred in approximating the essential boundary condition.  As usual, we define 
\begin{align}
\mathcal{B}(\bm{u},p; \bm{v},q)=a(\bm{u},\bm{v})+b(\bm{v},p)+b(\bm{u},q)-c(p,q),
%2\mu\int_{\Omega}\bm{\varepsilon}(\bm{u}):\bm{\varepsilon}(\bm{v})-\int_{\Omega} p \nabla\cdot \bm{v}-\int_{\Omega} q \nabla\cdot \bm{u} -\frac{1}{\lambda}\int_{\Omega}p q.
\end{align}
so as to  express (\ref{scm11a}) in the more compact form:
find $(\bm{u},p)\in \bm{H}^1_E\times  \M$ such that 
\begin{align}\label{scm12}
\mathcal{B}(\bm{u},p; \bm{v},q)=f(\bm{v}), \quad \forall (\bm{v},q)\in\bm{V}\times  \M.
\end{align}
Finally, we define the following \textit{energy} norm for the error analysis
\begin{align}\label{energy_def}
|||(\bm{u},p)|||^2&=2\mu\, {|| \nabla \bm{u}||^2_{0}}  +(2\mu)^{-1}||p||^2_{0}+ \kappa^{-1} ||p||^2_{0}.
\end{align}

\rbl{One} can establish the well-posedness of the weak formulation for $\nu \in (0, 1/2)$ by considering \eqref{scm11a} 
or  \eqref{scm12}. We will work with the latter. Note that when $\nu = 1/2$, $c(\cdot, \cdot)$ disappears
 from \eqref{scm11a} and the problem can be analysed as a saddle point problem in the standard way 
(similar to Stokes problems). However, since we impose $\bm{u}=\bm{g}$ on the whole boundary,
the pressure solution is only unique up to a constant in that case. 
%We \rbl{can} choose $\M=L_{0}^{2}(\Omega)$ 
%rather than $\M=L^{2}(\Omega)$ to cover this possibility.
 %In the analysis below, 
\rbl{We start by reviewing some useful results.} 
{For both} formulations, it is is easy to show that 
\begin{align}
 a(\bm{u},\bm{v})\le  {2\mu} \, { ||\nabla \bm{u} ||_0}  \,
  \, { ||\nabla \bm{v} ||_0} \quad \forall \bm{u},\bm{v}\in \bm{V}. \label{abd}
 \end{align}
It is also known that that there exists an (inf-sup) constant $C_{\Omega}>0$  such that
\begin{align}
 \sup_{0\neq\bm{v}\in \bm{V}}
\frac{b(\bm{v},q)}{ ||\nabla\bm{v}||_0 }\ge C_{\Omega}   ||q||_{0}, \quad { \forall q\in {\M}, q\neq {\rm constant},}
\label{binfsup}
\end{align}
 see, for example,  p.\,128 of Ref.~\refcite{HDA}.
%where  we \rbl{need to set} $\M=L_{0}^{2}(\Omega):=\left\{ q \in L^{2}(\Omega),  \int_{\Omega} q = 0 \right\} $
%in the limiting case of  $\nu = 1/2$ 
\rbl{Next,} in the Herrmann formulation, we  \rbl{know that}
\begin{align}
 a_{H}(\bm{v},\bm{v}) & \ge C_K 2\mu \, { ||\nabla \bm{v} ||_0^2} \quad \forall \bm{v}\in \bm{V},   
 \label{aell} 
  \end{align}
 by Korn's inequality, so that $a_{H}(\cdot, \cdot)$ is coercive on $\bm{V}$. 
 {Similarly,} in the Hydrostatic case\cite{DR} \rbl{we have that}
 \begin{align}
 a_{S}(\bm{v},\bm{v}) & \ge(1/2) 2\mu \, { ||\nabla \bm{v} ||_0^2} \quad \forall \bm{v}\in \bm{V}.  
 \label{aell-B} 
  \end{align}
 
 \rbl{We note in passing that  the coercivity estimate \eqref{aell-B} does not hold} if 
 $\partial \Omega = \partial \Omega_{D} \cup \partial \Omega_{N}$, 
 where $\partial \Omega_{N} \neq \emptyset$ is a portion of the boundary where $\bm{\sigma} \bm{n} =\bm{0}$.
\rbl{(This  case requires a separate treatment, exploiting the fact
 that $a_{S}(\cdot,\cdot)$ is coercive on an appropriate nullspace $\bm{V}_{0} \subset \bm{V}$.)} 
% We only consider the case $\partial\Omega_{N}=\emptyset$ for brevity, so that both the Hydrostatic and Herrmann formulations  can be analysed the same way, using \eqref{aell} and \eqref{aell-B}. 
 % \rrd{This is straightforward but the proofs of the results presented below are less concise.} %Hence they are not presented here.

The following stability result ensures well-posedness of (\ref{scm12}).
\begin{lemma}\label{Sinsuplem12}
Let $\rblx{\M_0}:=\left\{ q \in \rblx{L^2(\Omega)},  \int_{\Omega} q = 0 \right\}$.
For any $(\bm{u},p)\in \bm{V}\times \rblx{\M_0}$, 
there exists a pair of functions
$(\bm{v},q)\in \bm{V} \times  \rblx{\M_0}$, with  $|||(\bm{v},q)|||\lesssim |||(\bm{u},p)|||$, satisfying
$$ \mathcal{B}(\bm{u},p; \bm{v},q)\gtrsim |||(\bm{u},p)|||^{{2}}. $$
\end{lemma}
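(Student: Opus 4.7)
The plan is a classical Fortin-type argument: construct the test pair as $(\bm{v},q) = (\bm{u} - \delta \bm{w}_p,\,-p)$, where $\delta>0$ is a small parameter and $\bm{w}_p\in\bm{V}$ is a corrector supplied by the inf-sup condition \eqref{binfsup}, scaled carefully so that no constants depending on the Lam\'{e} parameters appear.

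First I would check the natural choice $(\bm{u},-p)$. The two divergence terms $b(\bm{u},p)$ and $b(\bm{u},-p)$ cancel, so
\begin{align*}
\mathcal{B}(\bm{u},p;\bm{u},-p) = a(\bm{u},\bm{u}) + c(p,p).
\end{align*}
Using \eqref{aell} in the Herrmann case or \eqref{aell-B} in the Hydrostatic case together with $c(p,p)=\kappa^{-1}\|p\|_0^2$ already controls the $2\mu\|\nabla\bm{u}\|_0^2$ and $\kappa^{-1}\|p\|_0^2$ contributions to $|||(\bm{u},p)|||^2$. The missing $(2\mu)^{-1}\|p\|_0^2$ piece is precisely what the corrector $\bm{w}_p$ must deliver.

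To build $\bm{w}_p$, I would invoke \eqref{binfsup} on the mean-zero function $p\in\M_0$ and rescale to obtain $\bm{w}_p\in\bm{V}$ with
\begin{align*}
-b(\bm{w}_p,p) = (2\mu)^{-1}\|p\|_0^2, \qquad \|\nabla\bm{w}_p\|_0 \leq C_\Omega^{-1}(2\mu)^{-1}\|p\|_0.
\end{align*}
Expanding $\mathcal{B}(\bm{u},p;\bm{u}-\delta\bm{w}_p,-p)$ produces the extra positive contribution $\delta(2\mu)^{-1}\|p\|_0^2$ from the new divergence term, together with a cross term $-\delta a(\bm{u},\bm{w}_p)$ that must be absorbed. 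The main obstacle is making the $\mu$-bookkeeping work out: from \eqref{abd} one gets $|a(\bm{u},\bm{w}_p)| \leq C_\Omega^{-1}\|\nabla\bm{u}\|_0\|p\|_0$, and the weighted Young inequality
\begin{align*}
\|\nabla\bm{u}\|_0\,\|p\|_0 \leq \tfrac{\eta}{2}\bigl(2\mu\|\nabla\bm{u}\|_0^2\bigr) + \tfrac{1}{2\eta}\bigl((2\mu)^{-1}\|p\|_0^2\bigr)
\end{align*}
re-expresses it in exactly the units of the energy norm, which is why the $(2\mu)^{-1}$ scaling of $\bm{w}_p$ is essential.

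Finally I would fix $\eta\sim 1/C_\Omega$ and then choose $\delta$ small enough (depending only on $C_\Omega$ and on $C_K$ or $1/2$) so that the cross term is absorbed into the coercivity reserve in $\bm{u}$ and into half of $\delta(2\mu)^{-1}\|p\|_0^2$. This gives the desired lower bound $\mathcal{B}(\bm{u},p;\bm{v},q)\gtrsim |||(\bm{u},p)|||^2$, with constants independent of $\mu$, $\lambda$ and $\kappa$. The upper bound $|||(\bm{v},q)|||\lesssim |||(\bm{u},p)|||$ then follows from the triangle inequality together with the scaling identity $2\mu\|\nabla\bm{w}_p\|_0^2 \lesssim (2\mu)^{-1}\|p\|_0^2$, while $-p\in\M_0$ ensures $(\bm{v},q)\in\bm{V}\times\M_0$ as required.
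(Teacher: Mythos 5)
Your argument is correct and is essentially the same one the paper relies on: the paper's proof simply cites Lemma 3.3 of Ref.~\refcite{KPS}, which uses exactly this test pair $(\bm{u}-\delta\bm{w},-p)$ with an inf-sup corrector scaled by $(2\mu)^{-1}$, and the same construction is written out in full in the paper's discrete analogue (Lemma \ref{dinsuplem11}). The $\mu$-weighted bookkeeping and the choice of $\eta$ and $\delta$ match the paper's treatment.
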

\begin{proof}
For the Herrmann case, the result follows from \eqref{abd}, \eqref{binfsup} and \eqref{aell}; see Lemma 3.3 in Ref.~\refcite{KPS}. In the Hydrostatic case, the same proof can be applied, using \eqref{aell-B} instead of \eqref{aell} (which is the same result with $C_{K}=1/2$). Since the energy norm \eqref{energy_def} is defined with respect to $\kappa$, the constant in the bound $\gtrsim$ is the same (up to the value of $C_{K}$).
 \end{proof}
\begin{remark}
\rblx{To check the uniqueness of the pressure solution for $\nu \in (0, 1/2)$  we}
test \eqref{scm11b} with a constant function $\rblx{q=1}$ and use the divergence theorem. This gives
 \begin{align}
  \frac{1}{\kappa}\int_{\Omega} p  \> &=  -\int_{\Omega}  \nabla\cdot \bm{u}
 = \rblx{-} \int_{\partial \Omega}  \bm{u} \cdot  \bm{n} \, ds ,
\quad\hbox{thus}\quad \int_{\Omega} p \> = -   \kappa \int_{\partial \Omega}  \bm{g} \cdot  \bm{n} \, ds . \label{uniquep}
  \end{align} 
The characterisation \eqref{uniquep} guarantees the uniqueness of the pressure \rblx{satisfying \eqref{scm12}} 
using either of the two formulations.
 \end{remark} 

\section{Stabilised $P_{1}$--$P_{0}$ approximation } \label{Hdivmethsec}

Let $\{\mathcal{T}_{h}\}$ denote a family of shape-regular triangular meshes of $\rbl{\overline{\Omega}}$ into triangles $K$ of diameter $h_K$.  For  each mesh $\mathcal{T}_h$, we let $\mathcal{E}_h$ denote the set of all edges and $h_E$ 
\rbl{denote} the length of an edge $E\in\mathcal{E}_h$. Next, we introduce finite-dimensional subsets $\bm{X}^h_E \subset \bm{H}^1_{E}$, $\bm{X}^h_0 \subset \bm{V}$ and {$\M^h \subset \M$}.  The discrete weak formulation of \eqref{scm11a} is as follows: find $(\bm{u}_h,p_h)\in \bm{X}^h_E\times \M^h$ such that 
 \begin{subequations} \label{FEA11}
\begin{align}
a(\bm{u}_h,\bm{v}_h)+b(\bm{v}_h,p_h)&=f(\bm{v}_h)\quad \forall  \bm{v}_h\in \bm{X}^h_0, \\
b(\bm{u}_h,q_h)-c(p_h,q_h)&=0\quad\quad\;\;\forall  q_h\in \M^h.
\end{align}
\end{subequations}
Specifically, we choose $\bm{X}^{h}_{0}$ to be the space of vector-valued functions that are piecewise linear in each component and globally continuous ($\bm{P}_{1}$), and we choose $\M^{h}$ to be the subset of $\M$ that contains piecewise constant functions ($P_{0}$). The solution space $\bm{X}^h_E$ is obtained from $\bm{X}^h_0$ by construction in the usual way,  by augmenting the basis with additional $\bm{P}_{1}$ functions associated with Dirichlet boundary nodes (where $\bm{g} \neq \bm{0}$). For more details about $\bm{P}_1$--${P}_{0}$ approximation, see  Refs.~\refcite{HDA,DD,DFM,ks92,ns98}.  
\rbl{We note that,  while the simplicity of the low-order scheme is very attractive} from a computational point of view,
stabilisation  \rbl{of the underlying  approximation is essential when} working with values of $\nu$ close to $1/2$. 

% CP: MAKE IT SHORTER. 
%\begin{align*}
%\bm{X}^{h}_E= \left \{ \bm{u}\Big|\bm{u}=\sum_{j=1}^{n_u} a_j\bm{\phi}_j
%+\sum_{j=n_u+1}^{n_u+n_\partial} a_j\bm{\phi}_j \right \}
%\end{align*}
%with coefficients $a_j\in \mathbb{R}$ and associated {vector-valued} basis functions $\{\bm{\phi}_j\}_{j=1}^{n_u}$ 
%that span $\bm{X}^h_0$. The coefficients $ \{a_j\}_{ j=n_u+1}^{n_u+n_\partial}$ are associated with 
% Lagrange interpolation of the boundary data $\bm{g}$ on $\Gamma_D$.  
 
Given a mesh $\mathcal{T}_{h}$, to define our stabilisation strategy, we first select a macroelement partitioning $\mathcal{M}_h$ which satisfies:
\begin{enumerate}
\item Each macroelement $M \in \mathcal{M}_{h}$ is a connected set of adjoining elements from $\mathcal{T}_h$. 
\item $M_i\cap M_j = \emptyset$ for all $M_i, M_j \in \mathcal{M}_{h}, \, i\neq j$. 
\item For any two neighboring macroelements $M_1$ and $M_2$ with $\int_{M_1\cap M_2}ds\neq 0$, there exists $\bm{v}\in 
\rblx{\bm{X}^h_0}$ such that supp $\bm{v}\subset \overline{M}_1\cup \overline{M}_2 $ 
and $\int_{M_1\cap M_2} \bm{v}\cdot \bm{n}\, ds\neq 0$.
\item $\cup_{M\in \mathcal{M}_{h}} \overline{M}= \rbl{\overline{\Omega}}$.
For each $M \in \mathcal{M}_{h}$, the set of \rbl{interior interelement edges} 
will be denoted by $\Gamma_M$.  That is,
$$\Gamma_M=\{ E\in \mathcal{E}_{h} \setminus \partial \Omega, E\subset M\}.$$
\end{enumerate}
With the above definition,  a locally stabilised version of the discrete weak problem (\ref{FEA11}) is as follows: find $(\bm{u}_h,p_h)\in \bm{X}^h_E\times \M^h$ such that 
 \begin{subequations} \label{LSFEA11}
\begin{align}
a(\bm{u}_h,\bm{v}_h)+b(\bm{v}_h,p_h)&=f(\bm{v}_h)\quad \forall \bm{v}_h\in\bm{X}^h_{0}, \\
b(\bm{u}_h,q_h)-c(p_h,q_h)-\mathcal{C}_{loc}(p_h,q_h)&=0,\quad\quad\;\;\forall  q_h\in \M^h, \label{LSFEA12}
\end{align}
\end{subequations}
where 
\begin{align*}
\mathcal{C}_{loc}(p_h,q_h)=\frac{1}{2\mu}\sum_{M\in\mathcal{M}_h}\sum_{E\in\Gamma_M}h_E
\int_{E}\llbracket p_h \rrbracket \llbracket q_h \rrbracket ds, \qquad p_{h}, q_{h} \in \M^{h},
\end{align*}
and $\llbracket\cdot\rrbracket$ denotes the jump across $E \in \Gamma_{M}$. 
 \begin{remark}
The choice of the stabilisation parameter  $(1/2\mu)$ in the definition of $\mathcal{C}_{loc}(\cdot, \cdot)$
is motivated by the  \rblx{a priori error analysis presented next}.
%The analysis below shows that the factor $1/2\mu$ is a good choice.
\end{remark}
The discrete \rblx{pressure}  $p_h\in \M^h$ that solves  \eqref{LSFEA11} is not uniquely defined
in the limiting case $\nu=1/2$. The associated linear algebra system is {\it singular} in this case.\footnote{In
the generation of the computational results with  $\nu=0.49999$ (discussed later in Section~\ref{Numres})   
the near-singular linear algebra systems were solved using $\backslash$  within MATLAB.}
\rblx{Define the constrained pressure approximation space  $\M_0^h=\M^h\cap  \M_0$.} 
We will assume that for any partitioning ${\mathcal{M}}_{h}$, each macroelement $M \in {\mathcal{M}}_{h}$ 
belongs to one of a finite number of possible equivalence classes $\mathcal{E}_{\hat{M}_{1}}, \ldots, \mathcal{E}_{\hat{M}_{N}}$. 
The next result immediately follows from  Lemma 3.1 in Ref.~\refcite{ks92}.  %Lemma \ref{macele11}.  
\begin{lemma}\label{alpha1-result}
  Let $\Pi_h$ be the $L^2$ projection operator {from $\rblx{\M_0^h}$} onto the subspace
\begin{align}
\rbl{\overline Q}^{\,h}=\{ q\in \rblx{\M_0}, \, q|_M\, \mbox{ is \rbl{constant} } \forall M\in \mathcal{M}_h \}.
 \end{align}
 Then, \rbl{there exists} $\alpha_{1}>0$ independent of $h$ and the Lam\'e coefficients \rbl{satisfying}
 \begin{align*}
 \mathcal{C}_{loc}(q,q)\ge\alpha_1\frac{1}{2\mu}||(I-\Pi_h)q||_{0}^2 \quad  \forall q\in \rblx{\M_0^h}.
 \end{align*}
 %where $\alpha_{1}= \min\{\gamma_{\hat{M}_i}, i=1,\ldots,N\}$  is  
 \end{lemma}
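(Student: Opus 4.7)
The plan is to establish the bound macroelement by macroelement, reducing it to a finite-dimensional norm equivalence on each reference macroelement, and then using the assumption of finitely many equivalence classes to extract a uniform constant.

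First I would observe that the factor $\frac{1}{2\mu}$ appears on both sides of the desired inequality and can be divided out, so I need to prove the mesh- and parameter-free bound
\begin{equation*}
\sum_{M\in\mathcal{M}_h}\sum_{E\in\Gamma_M}h_E\,\|\llbracket q\rrbracket\|_{0,E}^{2} \;\gtrsim\; \sum_{M\in\mathcal{M}_h}\|q-\bar q_M\|_{0,M}^{2},
\end{equation*}
where $\bar q_M$ denotes the average of $q$ over the macroelement $M$. The right-hand side equals $\|(I-\Pi_h)q\|_0^{2}$ because $\Pi_h$ is the $L^2$ projection onto macroelement-wise constants. Since both sides decouple into sums of independent macroelement contributions, it suffices to prove the inequality macroelement by macroelement.

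Next, I would fix a macroelement $M$ belonging to equivalence class $\mathcal{E}_{\hat M_i}$ and transport the problem to a reference macroelement $\hat M_i$ via affine scaling. Under this scaling, $\|q-\bar q_M\|_{0,M}^{2}$ scales like $h_M^{2}\|\hat q - \bar{\hat q}\|_{0,\hat M_i}^{2}$, while for each interior edge $h_E\|\llbracket q\rrbracket\|_{0,E}^{2}$ scales like $h_M^{2}\|\llbracket\hat q\rrbracket\|_{0,\hat E}^{2}$ (using shape regularity so that $h_E\sim h_M$ up to a uniform constant). On the reference macroelement $\hat M_i$, the space of piecewise constants on its constituent triangles is finite dimensional, and I would show that
\begin{equation*}
|\hat q|^{2}_{\hat M_i}:=\sum_{\hat E\in\Gamma_{\hat M_i}}\|\llbracket\hat q\rrbracket\|_{0,\hat E}^{2}
\end{equation*}
defines a seminorm that vanishes precisely on the constants. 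Indeed, if all interior jumps vanish then $\hat q$ is constant on each pair of adjoining triangles, and by assumption (1) the macroelement is connected, so $\hat q$ is globally constant on $\hat M_i$; conversely constants clearly give zero. Hence on the finite-dimensional quotient space modulo constants, $|\cdot|_{\hat M_i}$ and $\|\hat q-\bar{\hat q}\|_{0,\hat M_i}$ are equivalent norms, yielding a constant $\alpha_1^{(i)}>0$ depending only on $\hat M_i$.

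Finally I would scale back to $M$, observing that the $h_M^{2}$ factors cancel, and then sum over all macroelements. Since by hypothesis there are only finitely many equivalence classes $\hat M_1,\ldots,\hat M_N$, taking $\alpha_1:=\min_{1\le i\le N}\alpha_1^{(i)}$ produces a single constant that is independent of $h$ and of $\mu,\lambda$, delivering the claimed inequality. The main technical point is the norm-equivalence step on the reference macroelement, where connectedness of $\hat M_i$ (hypothesis (1)) is what ensures that the interior-jump seminorm actually separates non-constant piecewise constants.
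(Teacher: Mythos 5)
Your proposal is correct, and it is essentially the argument the paper relies on: the paper offers no proof of its own but simply invokes Lemma 3.1 of Kechkar \& Silvester (Ref.~8), whose proof is exactly your macroelement-by-macroelement decomposition, scaling to reference configurations, finite-dimensional norm equivalence of the jump seminorm with the quotient $L^2$ norm (using connectedness of each macroelement), and a minimum over the finitely many equivalence classes. The only point worth making explicit is that $\Pi_h q|_M=\bar q_M$ because the macroelement averages of a zero-mean $q$ automatically have zero global mean, so the constrained and unconstrained projections coincide; you use this implicitly and it is easily checked.
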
 
% \begin{proof}
% Proof follows from the direct consequence of Lemma \ref{macele11}. 
 %\end{proof}
 The stabilised discrete formulation (\ref{LSFEA11}) can {also}
be written as: find $(\bm{u}_h,p_h)\in \bm{X}^h_E\times \M^h$ such that 
\begin{align}\label{LSFEA13}
\mathcal{B}_S(\bm{u}_h,p_h; \bm{v}_h,q_h)=f(\bm{v}_h), \quad \forall (\bm{v}_h,q_h)\in\bm{X}^h_{0}\times \M^h,
\end{align}
 \rbl{which involves the stabilised bilinear} form
\begin{align*}
\mathcal{B}_{S}(\bm{u}_{h},p_{h}; \bm{v}_{h},q_{h})=a(\bm{u}_{h},\bm{v}_{h})+b(\bm{v}_{h},p_{h})+b(\bm{u}_{h},q_{h})-c(p_{h},q_{h})-\mathcal{C}_{loc}(p_h,q_h).
\end{align*}
We are now ready to prove a stability result for \eqref{LSFEA13}.

 \begin{lemma}\label{dinsuplem11}
For any $(\bm{u},p)\in \bm{X}^h_0\times \rblx{\M_0^h}$, there exists a pair of functions 
$(\bm{v},q)\in \bm{X}^h_0\times \rblx{\M_0^h}$ 
with $|||(\bm{v},q)|||\lesssim |||(\bm{u},p)|||$ satisfying
\begin{align*}
 \mathcal{B}_S(\bm{u},p; \bm{v},q)\gtrsim |||(\bm{u},p)|||^2.
\end{align*}
\end{lemma}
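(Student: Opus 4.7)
The plan is to mimic the standard stabilised-Stokes inf-sup argument, while carrying through the Lam\'e-dependent weights in \eqref{energy_def}. The starting observation is that
\begin{align*}
\mathcal{B}_S(\bm{u},p;\bm{u},-p) = a(\bm{u},\bm{u}) + c(p,p) + \mathcal{C}_{loc}(p,p),
\end{align*}
since the two $b(\cdot,\cdot)$ contributions cancel. By Korn's inequality \eqref{aell} (or \eqref{aell-B}) together with Lemma~\ref{alpha1-result}, this diagonal test pair already bounds
\begin{align*}
2\mu\, ||\nabla\bm{u}||_0^2 + \kappa^{-1}||p||_0^2 + (2\mu)^{-1}||(I-\Pi_h)p||_0^2
\end{align*}
from below, so the only ingredient of $|||(\bm{u},p)|||^2$ still to be recovered is the ``macro-constant'' pressure contribution $(2\mu)^{-1}||\Pi_h p||_0^2$.

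To pick this up I would add a velocity perturbation associated with $\Pi_h p$. The macroelement hypotheses (1)--(4) imply, by a standard Fortin/macroelement argument of the type used in Refs.~\refcite{ks92,ns98}, that the pair $\bm{X}^h_0 \times \overline{Q}^{\,h}$ is inf-sup stable: there exist $\beta>0$ (independent of $h$ and of the Lam\'e coefficients) and $\bm{w}\in \bm{X}^h_0$ with $||\nabla\bm{w}||_0 \lesssim ||\Pi_h p||_0$ and $b(\bm{w},\Pi_h p)\geq \beta\, ||\Pi_h p||_0^2$. I would then test with
\begin{align*}
(\bm{v},q) = \bigl(\bm{u} + \delta(2\mu)^{-1}\bm{w},\; -p\bigr)
\end{align*}
for a small $\delta>0$ to be chosen, so that
\begin{align*}
\mathcal{B}_S(\bm{u},p;\bm{v},q) = a(\bm{u},\bm{u}) + c(p,p) + \mathcal{C}_{loc}(p,p) + \delta(2\mu)^{-1}\bigl[a(\bm{u},\bm{w}) + b(\bm{w},p)\bigr].
\end{align*}
Splitting $b(\bm{w},p) = b(\bm{w},\Pi_h p) + b(\bm{w},(I-\Pi_h)p)$, the inf-sup summand contributes $\delta\beta(2\mu)^{-1}||\Pi_h p||_0^2$, while the two cross terms $\delta(2\mu)^{-1}a(\bm{u},\bm{w})$ and $\delta(2\mu)^{-1}b(\bm{w},(I-\Pi_h)p)$ are handled by Young's inequality with the natural $2\mu$-weight. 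Choosing $\delta$ small (depending only on $C_K,\alpha_1,\beta$) absorbs the former into $a(\bm{u},\bm{u})\gtrsim 2\mu\,||\nabla\bm{u}||_0^2$ and the latter into $\mathcal{C}_{loc}(p,p)\gtrsim \alpha_1(2\mu)^{-1}||(I-\Pi_h)p||_0^2$, leaving strictly positive coefficients in front of each energy-norm contribution. The orthogonal decomposition $||p||_0^2 = ||\Pi_h p||_0^2 + ||(I-\Pi_h)p||_0^2$ then delivers $\mathcal{B}_S(\bm{u},p;\bm{v},q) \gtrsim |||(\bm{u},p)|||^2$.

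The continuity bound $|||(\bm{v},q)|||\lesssim |||(\bm{u},p)|||$ follows immediately from $2\mu\,||\nabla\bm{v}||_0^2 \lesssim 2\mu\,||\nabla\bm{u}||_0^2 + \delta^2(2\mu)^{-1}||\Pi_h p||_0^2$. The main obstacle is really the Lam\'e-independent bookkeeping: every Young's-inequality weight must balance so that the $\mu$-factors drop out of the final constants. This is possible precisely because (i) the scaling $\delta(2\mu)^{-1}$ of the perturbation matches the $(2\mu)^{-1}$-weight of $||\Pi_h p||_0^2$ in the energy norm, (ii) the $(2\mu)^{-1}$ factor in the definition of $\mathcal{C}_{loc}$ is exactly what Lemma~\ref{alpha1-result} needs to produce a $(2\mu)^{-1}$-weighted bound on the unstable pressure mode, and (iii) the macroelement inf-sup constant $\beta$ supplied by the Stenberg framework is itself $\mu,\lambda$-free.
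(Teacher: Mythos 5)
Your proposal is correct and takes essentially the same route as the paper: the diagonal test pair $(\bm{u},-p)$ combined with Lemma~\ref{alpha1-result} controls $a$, $c$ and the non-constant pressure modes, and a $\delta$-scaled velocity perturbation built from a discrete inf-sup element for the macroelement-constant pressure $\Pi_h p$ recovers $(2\mu)^{-1}\|\Pi_h p\|_0^2$, with Young's inequality absorbing the two cross terms. The only differences are cosmetic: the paper normalises $\bm{w}$ so that $(\Pi_h p,\nabla\cdot\bm{w})=(2\mu)^{-1}\|\Pi_h p\|_0^2$ (i.e.\ it folds your explicit $(2\mu)^{-1}$ factor into $\bm{w}$ itself) and uses the test function $\bm{u}-\delta\bm{w}$ owing to the sign convention in $b(\cdot,\cdot)$.
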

\begin{proof}
\rbl{A consequence of \eqref{binfsup}} \rbl{is that} there exists a constant $\alpha_2$, 
independent of $h$ and the Lam\'e coefficients, and a function $\bm{w}\in \bm{X}^h_0$ satisfying
\begin{align}\label{deq11}
(\Pi_h p, \nabla\cdot \bm{w})=(2\mu)^{-1}||\Pi_h p||_{0}^2, \quad (2\mu)^{1/2}||\nabla\bm{w}||_{0}\le\alpha_2(2\mu)^{-1/2}||\Pi_h p||_{0}.
\end{align}
Since $(\bm{u},p)\in \bm{X}^h_{0}\times \rblx{\M_0^h}$ and $\bm{X}_{0}^{h} \subset \bm{V}$, $\rblx{\M_0^h \subset \M^h}$,  
\rbl{using} the definition of $\mathcal{B}_{S}(\cdot, \cdot)$ gives, 
\begin{align}
\mathcal{B}_S(\bm{u},p;\bm{u},-p) & \ge C_{K} 2\mu||\nabla \bm{u}||^2_0+ \kappa^{-1}||p||^2_0+\mathcal{C}_{loc}(p,p), \label{dinfsup1}
\end{align}
by \eqref{aell} (Herrmann case) or \eqref{aell-B} (Hydrostatic case).  Next, using \eqref{deq11} and \eqref{abd}, for any $\epsilon >0$ we have,  
\begin{align}\label{dinfsup2}
\mathcal{B}_S(\bm{u},p; -\bm{w},0) & = -a(\bm{u},\bm{w})-b(\bm{w}, (I-\Pi_h)p)-b(\bm{w}, \Pi_h p)\nonumber\\
&\ge -(2\mu)^{\frac{1}{2}}||\nabla\bm{u}||_0\;(2\mu)^{\frac{1}{2}}||\nabla\bm{w}||_0\nonumber\\
&\quad-(2\mu)^{-\frac{1}{2}}||(I-\Pi_h)p||_{0}(2\mu)^{\frac{1}{2}}||\nabla\bm{w}||_{0}\ + (2\mu)^{-1}||\Pi_h p||^2_{0} \nonumber\\
&\ge -(2\mu)^{1/2}||\nabla\bm{u}||_0\;\alpha_2(2\mu)^{-1/2}||\Pi_h p||_0\nonumber\\
&\quad-(2\mu)^{-1/2}||(I-\Pi_h)p||_{0}(2\mu)^{-1/2}\alpha_2||\Pi_h p||_{0} +  (2\mu)^{-1}||\Pi_h p||^2_{0},  \nonumber\\
&\ge   -\epsilon(2\mu)||\nabla\bm{u}||^2_0- 2\epsilon^{-1} \alpha_2^{2}(2\mu)^{-1}||\Pi_h p||_0^{2} \nonumber\\
&\quad-\epsilon(2\mu)^{-1}||(I-\Pi_h) p||_0^2 + (2\mu)^{-1}||\Pi_h p||^2_{0}.
\end{align} 

%\rrd{[TODO: CHECK THE LAST LINE. SQUARE MISSING ON $\alpha_{2}$ TERM??? AND MISSING FACTOR OF  1/2.]}

Now we introduce a parameter $\delta$. Using Lemma \ref{alpha1-result},  (\ref{dinfsup1}) and (\ref{dinfsup2}) we have,
\begin{align*}
\mathcal{B}_S(\bm{u},p;\bm{u} -\delta \bm{w},-p)&=\mathcal{B}(\bm{u},p;\bm{u},-p)+\delta\mathcal{B}(\bm{u},p; -\bm{w},0)\\
&\ge C_K 2\mu||\nabla\bm{u}||^2_0 + \kappa^{-1}||p||^2_0 + \mathcal{C}_{loc}(p,p)+\delta (2\mu)^{-1}||\Pi_h p||^2_{0}\nonumber\\
&-2\mu\delta\epsilon||\nabla\bm{u}||^2_0
-  2\delta \epsilon^{-1} \alpha_2^2(2\mu)^{-1}||\Pi_h p||_0^2 - \delta\epsilon(2\mu)^{-1}||(I-\Pi_h) p||_0^2,\\
&\ge (C_K-\delta\epsilon)2\mu||\nabla\bm{u}||^2_0+\kappa^{-1}||p||^2_0+\delta \left(1- 2\alpha_2^2 \epsilon^{-1}\right)(2\mu)^{-1}||\Pi_h p||^2_{0}\nonumber\\
&\quad-\delta\epsilon(2\mu)^{-1}||(I-\Pi_h) p||_0^2+\alpha_1(2\mu)^{-1}||(I-\Pi_h) p||_0^2,\\
&\ge (C_K-\delta\epsilon)2\mu||\nabla\bm{u}||^2_0+\kappa^{-1}||p||^2_0+\delta \left(1-\ 2\alpha_2^2 \epsilon^{-1}\right)(2\mu)^{-1}||\Pi_h p||^2_{0}\nonumber\\
&\quad+(\alpha_1-\delta\epsilon)(2\mu)^{-1}||(I-\Pi_h) p||_0^2.
\end{align*}
Making the specific choices $\epsilon =4\alpha_2^2$ and $\delta=\frac{1}{4\alpha_2^2}\min\{C_K/2,\alpha_1/2 \}$, it follows:
\begin{align}\label{infsup11}
\mathcal{B}_S(\bm{u},p;\bm{u} -\delta \bm{w},-p)&\ge C \left(2\mu||\nabla\bm{u}||^2_0+\frac{1}{\kappa}||p||^2_0 +(2\mu)^{-1}\left(||\Pi_h p||^2_{0}+||(I-\Pi_h) p||_0^2\right)\right),\nonumber\\
&\ge C \left(2\mu||\nabla\bm{u}||^2_0+\frac{1}{\kappa}||p||^2_0+(2\mu)^{-1}||p||^2_{0}\right),
\end{align}
where $C=  \min\left\{1,\frac{C_K}{2},\frac{\alpha_1}{2},\frac{\delta}{2}\right\}$. Hence, the result holds with $\bm{v}=\bm{u}-\delta \bm{w}$ and $q=-p$. Finally, using the definition of $||| \cdot ||| $  and \eqref{deq11} gives
\begin{align}\label{infsup21}
 |||(\bm{v},q)|||^2  & = 2 \mu \| \nabla\left(\bm{u}-\delta \bm{w} \right)\|_{0}^{2} + (2\mu)^{-1} \| p \|_{0}^{2} + \kappa^{-1}\|p\|_{0}^{2}   \nonumber \\
& \le  2(2\mu)||\nabla\bm{u}||_{0}^2+ 2(2\mu)\delta^2||\nabla\bm{w}||_{0}^{2} +\left(\kappa^{-1}+(2\mu)^{-1}\right)||p||^{2}_{0},\nonumber\\
%& \le  2(2\mu)||\nabla\bm{u}||_{0}^2+ 2 \delta^2 \alpha_{2}^{2} \left(2\mu\right)^{-1} ||\Pi_{h} p||_{0}^{2}+\Bigg(\frac{1}{\kappa}+\frac{1}{2\mu}\Bigg)||p||^{2}_{0} \nonumber \\
&\le \left(2+\frac{\delta^2\alpha_2^2}{2}\right)|||(\bm{u},p)|||^2.
\end{align}
The constants in (\ref{infsup11}) and  (\ref{infsup21}) are independent of the Lam\'e coefficients. \end{proof}

We \rbl{can} now establish an a priori bound for the energy norm of the error associated with the 
stabilised $\bm{P}_{1}$--$P_{0}$ approximation.

\begin{theorem}\label{apriori-thm} Let $\rblx{(\bm{u},p)\in \bm{H}^1_E\times \M}$ 
be the solution to (\ref{scm11a}) 
and let  $(\bm{u}_h,p_h)\in\bm{X}^h_E\times \M^h$  satisfy \eqref{LSFEA11}.  \rblx{Suppose that
 $\int_{\partial\Omega} \bm{g} \cdot  \bm{n} \, ds =0$  so that $\int_\Omega p = 0 = \int_\Omega p_h$
from \eqref{uniquep}.}
If $\bm{u}\in \bm{H}^2(\Omega)$ and $p\in H^1(\Omega)$, then
 \begin{align}
 |||(\bm{u}-\bm{u}_h,p-p_h)|||\lesssim  h \left((2\mu)^{1/2} |\bm{u} |_{2}+\left((2\mu)^{-1/2}+\kappa^{-1/2}\right) | p |_1\right).
 \end{align}
 \end{theorem}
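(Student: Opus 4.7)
The plan is to combine a Céa-type argument based on the discrete stability bound of Lemma \ref{dinsuplem11} with strong consistency of the stabilised formulation and a scaled trace estimate to control the stabilisation contribution. The hypothesis $\int_{\partial\Omega}\bm{g}\cdot\bm{n}\,ds=0$ combined with \eqref{uniquep} forces $p\in \M_0$ and $p_h\in \M_0^h$, so every pressure quantity can be taken in the zero-mean subspace.

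First I will choose an interpolant $\bm{u}_I\in\bm{X}^h_E$ (of Scott--Zhang / Clément type so that the boundary data is preserved exactly; this is straightforward since $\bm{g}$ is piecewise polynomial by assumption) and let $p_I\in \M_0^h$ be the $L^2$-projection of $p$ onto elementwise constants, which automatically has zero mean because $\int_\Omega p_I=\int_\Omega p=0$. Standard approximation estimates on shape-regular triangular meshes give
\begin{equation*}
\|\nabla(\bm{u}-\bm{u}_I)\|_0\lesssim h\,|\bm{u}|_2, \qquad \|p-p_I\|_0\lesssim h\,|p|_1,
\end{equation*}
so the definition \eqref{energy_def} yields directly $|||(\bm{u}-\bm{u}_I,p-p_I)|||\lesssim h\bigl((2\mu)^{1/2}|\bm{u}|_2+((2\mu)^{-1/2}+\kappa^{-1/2})|p|_1\bigr)$. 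By the triangle inequality it remains to control the discrete error $(\bm{e}_h,\eta_h):=(\bm{u}_h-\bm{u}_I,p_h-p_I)\in\bm{X}^h_0\times \M_0^h$.

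Next I will apply Lemma \ref{dinsuplem11} to $(\bm{e}_h,\eta_h)$ to obtain $(\bm{v}_h,q_h)\in\bm{X}^h_0\times \M_0^h$ with $|||(\bm{v}_h,q_h)|||\lesssim|||(\bm{e}_h,\eta_h)|||$ and $|||(\bm{e}_h,\eta_h)|||^2\lesssim\mathcal{B}_S(\bm{e}_h,\eta_h;\bm{v}_h,q_h)$. Since $\bm{v}_h\in\bm{V}$ and $q_h\in \M$, the continuous problem gives $\mathcal{B}(\bm{u},p;\bm{v}_h,q_h)=f(\bm{v}_h)$; moreover $p\in H^1(\Omega)$ has no jumps across interior edges so $\mathcal{C}_{loc}(p,q_h)=0$. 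This yields the strong consistency identity $\mathcal{B}_S(\bm{u},p;\bm{v}_h,q_h)=f(\bm{v}_h)=\mathcal{B}_S(\bm{u}_h,p_h;\bm{v}_h,q_h)$, whence
\begin{equation*}
\mathcal{B}_S(\bm{e}_h,\eta_h;\bm{v}_h,q_h)=\mathcal{B}_S(\bm{u}-\bm{u}_I,p-p_I;\bm{v}_h,q_h),
\end{equation*}
reducing the problem to bounding the five contributions to the right-hand side.

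The contributions from $a(\cdot,\cdot)$, the two $b(\cdot,\cdot)$ terms and $c(\cdot,\cdot)$ are routine: apply Cauchy--Schwarz, split the weights $(2\mu)^{1/2}\cdot(2\mu)^{-1/2}$ or $\kappa^{-1/2}\cdot\kappa^{-1/2}$ to match $|||\cdot|||$, and insert the interpolation estimates above. The main obstacle will be controlling the stabilisation term $\mathcal{C}_{loc}(p-p_I,q_h)$. For this I will apply Cauchy--Schwarz in $\mathcal{C}_{loc}(\cdot,\cdot)$, bound $\mathcal{C}_{loc}(q_h,q_h)\lesssim(2\mu)^{-1}\|q_h\|_0^2\le|||(\bm{v}_h,q_h)|||^2$ via shape-regularity and an inverse/trace estimate for the piecewise-constant $q_h$, and then bound
\begin{equation*}
\mathcal{C}_{loc}(p-p_I,p-p_I)=\frac{1}{2\mu}\sum_{M\in\mathcal{M}_h}\sum_{E\in\Gamma_M}h_E\|\llbracket p-p_I\rrbracket\|_{L^2(E)}^2\lesssim(2\mu)^{-1}h^2|p|_1^2
\end{equation*}
by applying on each adjacent triangle $K$ the scaled trace inequality $\|p-p_I\|_{L^2(\partial K)}^2\lesssim h_K^{-1}\|p-p_I\|_{L^2(K)}^2+h_K\,|p|_{H^1(K)}^2$ together with $\|p-p_I\|_{L^2(K)}\lesssim h_K\,|p|_{H^1(K)}$ (using $|p-p_I|_{H^1(K)}=|p|_{H^1(K)}$ because $p_I|_K$ is constant). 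Assembling all five contributions, using $|||(\bm{v}_h,q_h)|||\lesssim|||(\bm{e}_h,\eta_h)|||$ to absorb one factor, and then invoking the triangle inequality yields the stated energy estimate with constants independent of $\mu$, $\lambda$ and $h$.
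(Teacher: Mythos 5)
Your proposal is correct and follows essentially the same route as the paper's own proof: interpolate, split by the triangle inequality, apply the discrete stability result (Lemma \ref{dinsuplem11}) together with the consistency identity $\mathcal{B}_S(\bm{u}-\bm{u}_h,p-p_h;\bm{v}_h,q_h)=0$ (using $\mathcal{C}_{loc}(p,\cdot)=0$ for $p\in H^1$), and control the leftover stabilisation term $\mathcal{C}_{loc}(p-p_I,q_h)$ by Cauchy--Schwarz plus scaled trace estimates. The only cosmetic difference is that the paper isolates $\mathcal{C}_{loc}(\tilde p,q)$ explicitly rather than folding it into $\mathcal{B}_S$ applied to the interpolation error, which is algebraically the same step.
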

 \begin{proof}
\rblx{Let $\tilde{\bm{u}} \in \bm{X}^h_E$   represent the piecewise linear interpolant of $\bm{u}\in \bm{H}^1_E$} and  let 
$\tilde{p} \in  \M^h_0$ be the piecewise  constant projection  of $p\in\M$ \rblx{with mean value zero}.
\rblx{Using} the triangle inequality gives
 \begin{align}\label{tri-bound}
  |||(\bm{u}-\bm{u}_h,p-p_h)|||  \, \lesssim \,  |||(\bm{u}-\tilde{\bm{u}},p-\tilde{p})||| + |||(\tilde{\bm{u}}-\bm{u}_{h},\tilde{p}- p_{h})|||,
  \end{align}
  and the interpolation error satisfies 
  \begin{align}\label{interp-bound}
 ||| (\bm{u} - \tilde{\bm{u}}, p - \tilde{p}) |||   \, \lesssim \,   h \left( (2\mu)^{1/2} |\bm{u} |_{2}+\left((2\mu)^{-1/2}+\kappa^{-1/2}\right) | p |_1\right).
   \end{align} 
 Now, for all $(\bm{v},q)\in \bm{X}^{h}_{0}\times \M^h$, using (\ref{scm12}) and (\ref{LSFEA13}), \rbl{gives}
 \begin{align*}
\mathcal{B}_S(\bm{u}_h-\tilde{\bm{u}},p_h-\tilde{p} ; \bm{v},q)& = \mathcal{B}_S(\bm{u}_h,p_h ; \bm{v},q)-\mathcal{B}(\tilde{\bm{u}},\tilde{p} ; \bm{v},q)+\mathcal{C}_{loc}(\tilde{p},q) \\
&= \mathcal{B}(\bm{u},p;\bm{v},q)-\mathcal{B}(\tilde{\bm{u}},\tilde{p} ; \bm{v},q)+\mathcal{C}_{loc}(\tilde{p},q) \nonumber\\
&= \mathcal{B}(\bm{u}-\tilde{\bm{u}},p-\tilde{p} ; \bm{v},q)+\mathcal{C}_{loc}(\tilde{p},q).
\end{align*}
Since $(\bm{u}_{h}-\tilde{\bm{u}}, p_{h}-\tilde{p}) \in \rblx{\bm{X}_{0}^{h} \times \rblx{\M_0^h}}$, 
applying Lemma \ref{dinsuplem11}  in the usual way gives
\begin{align*}
|||(\bm{u}_h-\tilde{\bm{u}},p_h-\tilde{p})||| \lesssim ||| \left(\bm{u}-\tilde{\bm{u}},p-\tilde{p} \right) |||
+ \sup_{\rblx{q\in \M_0^h},(2\mu)^{-1/2}||q||_{0}=1}\mathcal{C}_{loc}(\tilde{p},q).
\end{align*}
If $p\in H^1(\Omega)$, then $\mathcal{C}_{loc}({p},q)=0$ and the Cauchy--Schwarz inequality gives
\begin{align*}
\mathcal{C}_{loc}(\tilde{p},q)=\mathcal{C}_{loc}(\tilde{p}-p,q)\lesssim\Bigg(\sum_{K\in \mathcal{T}_h} \frac{h_K}{{2\mu}}||p-\tilde{p}||_{0,\partial K}^2\Bigg)^{1/2}\Bigg(\sum_{K\in \mathcal{T}_h} \frac{h_K}{{2\mu}}||q||_{0,\partial K}^2\Bigg)^{1/2}.\nonumber
\end{align*}
Following the proof of Theorem 3.1 in Ref.~\refcite{ks92}, it follows that
\begin{align*}
\left(\sum_{K\in \mathcal{T}_h} \frac{h_K}{2\mu}||q||_{0,\partial K}^2\right)^{1/2} \lesssim\frac{1}{\sqrt{2\mu}}||q||_{0}, \quad 
\Bigg(\sum_{K\in \mathcal{T}_h} \frac{h_K}{{2\mu}}||p-\tilde{p}||_{0,\partial K}^2\Bigg)^{1/2} \lesssim \frac{h}{\sqrt{2\mu}}|p|_1,
\end{align*}
and hence
\begin{align}\label{other-bound}
|||(\bm{u}_h-\tilde{\bm{u}},p_h-\tilde{p})||| \lesssim ||| \left(\bm{u}-\tilde{\bm{u}},p-\tilde{p} \right) |||+ \frac{h}{\sqrt{2\mu}}|p|_1.
\end{align}
Combining \eqref{tri-bound} with \eqref{interp-bound} and \eqref{other-bound} gives the final result.
\end{proof}

\section{A posteriori error analysis}\label{apost}
\rbl{Two alternative a posteriori {energy} error estimation strategies  will be discussed here. Both 
estimation strategies are {robust} in the sense that material parameters do not appear in the error bounds. 
The proofs are presented here for completeness---they are a minor  extension of the results
established in Ref.~\refcite{KPS}.}

\subsection{Residual error estimation}\label{rposest}
%We now discuss a posteriori error estimation. 
\rbl{We discuss a residual-based error estimator first.  The definition involves three} {distinct} parameters:
\begin{align}\label{gridparams}
\rho_K=h_K(2\mu)^{-\frac{1}{2}},\quad \rho_E=h_E(2\mu)^{-1},\quad\rho_d={1/( \kappa^{-1} + (2\mu)^{-1})}.
\end{align}
Let $(\bm{u}_{h}, p_{h}) \in\bm{X}^h_E\times \M^h$  satisfy \eqref{LSFEA11} and let $\bm{f}_h$ be the $L^2$-projection of $\bm{f}$ onto the space of piecewise constant \rbl{functions}. For each element $K$ in the finite element mesh $\mathcal{T}_{h}$, we define  the local data oscillation error  $\bm{\Theta}_{K}$ satisfying
\begin{align}
 \label{dataapp1}
 \bm{\Theta}_{K}^2=\rho_{K}^2||\bm{f}-\bm{f}_{h}||^{2}_{0,K},
 \end{align}
and a local error indicator $\eta_{K}$ satisfying
$\eta^2_{K}=\eta^2_{R_K}+\eta^2_{E_K}+\eta_{J_K}^2$, where
\begin{align}\label{components}
 \eta^2_{R_K}=\rho_{K}^2||\bm{R}_K||^2_{0,K}, \quad
\eta^2_{J_K}=\rho_d||R_K||^2_{0,K}\quad
\mbox{and}\quad
\eta^2_{E_K}=\sum_{E\in \rbl{\partial K}}\rho_E||\bm{R}_E||^2_{0,E}.
\end{align}
The two {\it element} residuals associated with \eqref{os2a} are given by
\begin{align} \label{elt_resid}
\bm{R}_K=\bm{f}_{h}\big|_K,
\quad R_K=\rblx{\left\{\nabla\cdot \bm{u}_h+ \frac{p_{h}}{\kappa} \right\} \Big|_K},
\end{align}
and the {\it edge} residual $\bm{R}_{E}$ is associated with the normal stress jump.  That is, 
\begin{align} \label{stress-jump-def}
\bm{R}_E=\left\{\begin{array}{ll}
\frac{1}{2}\llbracket -\bm{\sigma}(\bm{u}_{h}, p_{h}) \bm{n}\rrbracket_E & \quad E\in \mathcal{E}_h\setminus\partial\Omega ,\\
%((p_{h} \bm{I}-2\mu\bm{\varepsilon}(\bm{u}_{h}))\bm{n})_E& E\in \mathcal{E}_h\cap\Gamma_N ,\\
0 & \quad {E \in \partial \Omega},
\end{array}\right. 
\end{align}
where $\bm{\sigma}(\bm{u}_{h}, p_{h})$ is defined via \eqref{stress-def}. Note that since $\bm{u}_{h} \in \bm{P}_{1}$ and $p_{h} \in P_{0}$, $\nabla \cdot \bm{u}_{h}$ and \rbl{$R_K$} are constant on each element, \rbl{as  is}
 the normal stress jump on each edge (in both formulations). Hence, $\eta_{K}$ is straightforward to compute.
\rbl{Finally,} we sum the element contributions to give the \rbl{residual error} estimator and data oscillation error respectively,
 \begin{align}\label{errest1}
 \eta=\left(\sum_{K\in\mathcal{T}_{h}}\eta_{K}^2  \right)^{1/2} \quad \hbox{and} \quad
  \bm{\Theta}=\left(\sum_{K\in\mathcal{T}_{h}}\bm{\Theta}_{K}^2\right)^{1/2}.
 \end{align}
\begin{remark}
{The nonuniqueness of the  pressure solution in the incompressible limit
is not seen by the error estimator ($p_h$ drops out of $R_k$ when $\lambda\to\infty$
and $\bm{R}_E$  measures inter-element {\it jumps} in the pressure).}
\end{remark}
Theorems \ref{realiab} and \ref{efficie} show that $\eta$ is a reliable and efficient estimator for the energy error associated with locally stabilised $\bm{P}_{1}$--$P_{0}$ approximations of \eqref{scm11a}. The following standard result is needed for Theorem \ref{realiab}.
 \begin{lemma}[Cl\'{e}ment interpolation]\label{approxlem11}
Given $\bm{v}\in \bm{V},$ let $\bm{v}_h\in \bm{X}^h_0$ be the quasi-interpolant of $\bm{v}$ defined by averaging\cite{CLA}. For any $K\in\mathcal{T}_h$,
 \begin{align*}
 \rho^{-1}_K||\bm{v}-\bm{v}_h||_{0,K}&\lesssim (2\mu)^{1/2} |\bm{v}|_{1,\omega_K},
 \end{align*}
 {where  $|\cdot |_{1,\omega_K}$ is the  $H^1(\omega_K)$ seminorm. Moreover,}  for all $E\in\partial K$  {we have}
  \begin{align*}
  \rho^{-1/2}_E||\bm{v}-\bm{v}_h||_{0,E}&\lesssim (2\mu)^{1/2} |\bm{v}|_{1,\omega_K},
  \end{align*}
  where $\omega_K$ is the set of triangles sharing at least one vertex with $K$.
 \end{lemma}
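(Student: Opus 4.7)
The plan is to reduce the claim to classical Clément-type interpolation estimates, exploiting the fact that the $(2\mu)^{1/2}$ factor cancels on both sides. Using the definitions $\rho_K^{-1}=h_K^{-1}(2\mu)^{1/2}$ and $\rho_E^{-1/2}=h_E^{-1/2}(2\mu)^{1/2}$, the asserted bounds are equivalent to the parameter-free estimates
\begin{align*}
\|\bm{v}-\bm{v}_h\|_{0,K}\lesssim h_K|\bm{v}|_{1,\omega_K},\qquad
\|\bm{v}-\bm{v}_h\|_{0,E}\lesssim h_E^{1/2}|\bm{v}|_{1,\omega_K},
\end{align*}
with constants depending only on the shape-regularity of the family $\{\mathcal{T}_h\}$. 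Thus the Lam\'e coefficients play no real role and can be ignored after this initial observation.

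Next I would specify the quasi-interpolant componentwise: for each interior vertex $z$ of $\mathcal{T}_h$, take the nodal value $\bm{v}_h(z)$ to be the $L^2$-average of $\bm{v}$ over the patch $\omega_z$ consisting of the triangles meeting at $z$; for a vertex $z\in\partial\Omega$, set $\bm{v}_h(z)=\bm{0}$ to enforce membership in $\bm{X}_0^h\subset\bm{V}$. Then for any $K\in\mathcal{T}_h$ with vertices $z_1,z_2,z_3$, write
\begin{align*}
\bm{v}-\bm{v}_h\big|_K = (\bm{v}-\bar{\bm{v}})+\sum_{i=1}^{3}\bigl(\bar{\bm{v}}-\bm{v}_h(z_i)\bigr)\phi_i\big|_K,
\end{align*}
where $\bar{\bm{v}}$ is the $L^2$-average of $\bm{v}$ on $\omega_K$ and $\phi_i$ are the $P_1$ hat functions. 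Scaling each nodal patch $\omega_{z_i}\subset\omega_K$ to a reference configuration and applying the Bramble--Hilbert / Poincar\'e--Friedrichs lemma yields $|\bar{\bm{v}}-\bm{v}_h(z_i)|\lesssim h_K^{-1}\|\bm{v}-\bar{\bm{v}}\|_{0,\omega_{z_i}}\lesssim|\bm{v}|_{1,\omega_K}$, while shape-regularity gives $\|\phi_i\|_{0,K}\lesssim h_K$. Combined with $\|\bm{v}-\bar{\bm{v}}\|_{0,K}\lesssim h_K|\bm{v}|_{1,\omega_K}$ from Poincar\'e on $\omega_K$, this produces the volume estimate.

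For the edge bound, I would apply the scaled trace inequality
\begin{align*}
\|w\|_{0,E}^2\lesssim h_E^{-1}\|w\|_{0,K}^2+h_E\,|w|_{1,K}^2
\end{align*}
with $w=\bm{v}-\bm{v}_h$ on an element $K$ with $E\in\partial K$. The volume term is already controlled, and the same patch-averaging argument (together with the standard inverse estimate on the piecewise linear part) gives $|\bm{v}-\bm{v}_h|_{1,K}\lesssim|\bm{v}|_{1,\omega_K}$, so the two contributions combine to the claimed $h_E^{1/2}|\bm{v}|_{1,\omega_K}$ bound.

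The main technical obstacle is the proper treatment of boundary vertices: the prescription $\bm{v}_h(z)=\bm{0}$ on $\partial\Omega$ must still be compensated by a Poincar\'e-type inequality on each boundary patch, which requires $\omega_z$ to touch the boundary so that the zero-trace condition on $\bm{v}\in\bm{H}^1_0(\Omega)$ can be exploited. This is by now standard (cf.\ the Scott--Zhang construction) and affects only the shape-regularity constant hidden in $\lesssim$, not the scaling in $h$ nor the independence from $\mu$ and $\lambda$.
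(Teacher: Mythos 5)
Your proposal is correct: after unwinding $\rho_K^{-1}=h_K^{-1}(2\mu)^{1/2}$ and $\rho_E^{-1/2}=h_E^{-1/2}(2\mu)^{1/2}$ the factors of $(2\mu)^{1/2}$ cancel and the lemma reduces to the classical, parameter-free Cl\'{e}ment estimates, which is exactly the point of the statement. The paper offers no proof at all---it simply cites Cl\'{e}ment's construction as a standard result---and your patch-averaging argument (with the zero nodal values at boundary vertices compensated by a Friedrichs inequality on boundary patches, and the scaled trace plus $H^1$-stability step for the edge bound) is the standard proof that the citation points to, so there is nothing to fault beyond the routine constants hidden in $\lesssim$.
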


\begin{theorem}\label{realiab}
Suppose that  $\rblx{(\bm{u},p) \in \bm{H}^1_E\times \M}$ is the weak solution satisfying 
(\ref{scm11a}) and $(\bm{u}_h,p_h)\in\bm{X}^h_E\times \M^h$ satisfies \eqref{LSFEA11}.   {Suppose further that
 $\int_{\partial\Omega} \bm{g} \cdot  \bm{n} \, ds =0$  so that $\int_\Omega p = 0 = \int_\Omega p_h$
from \eqref{uniquep}.}
Defining  $\eta$ and $\bm{\Theta}$ as in \eqref{errest1}, we have
% we obtain the following a  posteriori error bound
   \begin{align}
     |||(\bm{u}-\bm{u}_h, p-p_h)|||\lesssim \eta +\Theta.
   \end{align}   
 \end{theorem}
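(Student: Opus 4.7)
The plan is to adapt the dual-norm strategy of Ref.~\refcite{KPS} to the present lowest-order, stabilised setting. Let $\bm{e}_u := \bm{u}-\bm{u}_h$ and $e_p := p-p_h$. Under the boundary hypothesis, the characterisation \eqref{uniquep} forces $\int_\Omega p=\int_\Omega p_h=0$, so $(\bm{e}_u,e_p)\in\bm{V}\times \M_0$. Applying Lemma~\ref{Sinsuplem12} will then produce test functions $(\bm{v},q)\in\bm{V}\times \M_0$ satisfying $|||(\bm{v},q)|||\lesssim |||(\bm{e}_u,e_p)|||$ and
$$|||(\bm{e}_u,e_p)|||^2\lesssim \mathcal{B}(\bm{e}_u,e_p;\bm{v},q).$$
It will therefore be enough to establish $\mathcal{B}(\bm{e}_u,e_p;\bm{v},q)\lesssim (\eta+\bm{\Theta})\,|||(\bm{v},q)|||$ and cancel one factor of the energy norm.

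Next I will split
$$\mathcal{B}(\bm{e}_u,e_p;\bm{v},q) = \bigl[a(\bm{e}_u,\bm{v})+b(\bm{v},e_p)\bigr] + \bigl[b(\bm{e}_u,q)-c(e_p,q)\bigr].$$
Using the continuous continuity equation $b(\bm{u},q)=c(p,q)$, the second bracket reduces to $\sum_K \int_K R_K\,q$ with $R_K$ as in \eqref{elt_resid}. For the first bracket I take $\bm{v}_h\in\bm{X}^h_0$ to be the Cl\'ement quasi-interpolant of $\bm{v}$ from Lemma~\ref{approxlem11} and invoke Galerkin orthogonality for the first row of \eqref{LSFEA11}. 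Elementwise integration by parts then takes over: because $\bm{u}_h$ is piecewise $\bm{P}_1$ and $p_h$ piecewise $P_0$, one checks that $\nabla\cdot\bm{\sigma}(\bm{u}_h,p_h)\equiv\bm{0}$ inside each $K$ in both the Herrmann and Hydrostatic formulations, so only the edge jumps $\bm{R}_E$, the element term $\bm{R}_K=\bm{f}_h$, and the data oscillation $\bm{f}-\bm{f}_h$ survive. Cauchy--Schwarz together with the $\rho_K,\rho_E$-weighted bounds of Lemma~\ref{approxlem11} will then yield $|a(\bm{e}_u,\bm{v})+b(\bm{v},e_p)|\lesssim (\eta+\bm{\Theta})(2\mu)^{1/2}|\bm{v}|_1 \lesssim (\eta+\bm{\Theta})|||(\bm{v},q)|||$. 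A single weighted Cauchy--Schwarz on the continuity part gives $\big|\sum_K\int_K R_K q\big|\le \bigl(\sum_K \rho_d\|R_K\|_{0,K}^2\bigr)^{1/2}\rho_d^{-1/2}\|q\|_0\le \eta\,|||(\bm{v},q)|||$, where the final step uses $\rho_d^{-1}=(2\mu)^{-1}+\kappa^{-1}$ together with the definition \eqref{energy_def}.

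The step I expect to be the main obstacle---and the one that distinguishes this proof from the stable mixed case of Ref.~\refcite{KPS}---is the treatment of the stabilisation bilinear form $\mathcal{C}_{loc}$. If one were to test the second equation of \eqref{LSFEA11} with a nontrivial $q_h\in \M_0^h$ in order to orthogonalise the continuity part, the extra term $-\mathcal{C}_{loc}(p_h,q_h)$ would appear on the right-hand side; controlling it robustly would require bounding the pressure jumps $\llbracket p_h\rrbracket$ through the normal components of $\bm{R}_E$ alone, which is awkward because $\bm{R}_E$ mixes $\llbracket 2\mu\bm{\varepsilon}(\bm{u}_h)\bm{n}\rrbracket$ with $\llbracket p_h\rrbracket$ and cancellation between the two contributions is in principle possible. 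The way I will sidestep this is simply to take $q_h=0$, so that $\mathcal{C}_{loc}$ never enters the argument, and to let the weight $\rho_d$ absorb the continuity residual directly. The design $\rho_d^{-1}=(2\mu)^{-1}+\kappa^{-1}$ in \eqref{gridparams} is engineered to match \emph{both} pressure contributions in \eqref{energy_def} simultaneously, which is precisely what delivers robustness as $\nu\to 1/2$.
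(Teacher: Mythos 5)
Your proposal is correct and follows essentially the same route as the paper's proof: the continuous stability result of Lemma~\ref{Sinsuplem12} applied to $(\bm{u}-\bm{u}_h,p-p_h)\in\bm{V}\times \M_0$, Galerkin orthogonality invoked only in the displacement equation with the Cl\'ement interpolant paired with a zero pressure test function (so that $\mathcal{C}_{loc}$ never appears), elementwise integration by parts exploiting $\nabla\cdot\bm{\sigma}(\bm{u}_h,p_h)=\bm{0}$ on each $K$, and weighted Cauchy--Schwarz with Lemma~\ref{approxlem11}. Your closing observation about why $q_h=0$ is the right choice is exactly the mechanism the paper relies on, so there is nothing to add.
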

 
\begin{proof} Since $(\bm{u}-\bm{u}_{h}, p-p_{h}) \in \rblx{\bm{V} \times \M_0}$, from Lemma \ref{Sinsuplem12}, we have
\begin{align*}
|||(\bm{u}-\bm{u}_h,p-p_h )|||^{2} \lesssim \mathcal{B}(\bm{u}-\bm{u}_h,p-p_h;\bm{v},q)
%&+||f-\nabla\cdot \bm{u}_h^c||_{0,\Omega_D}.
\end{align*}
for some $(\bm{v}, q) \in \rblx{\bm{V} \times \M_0}$ with 
$|||(\bm{v},q)|||\lesssim  |||(\bm{u} -\bm{u}_h,{p-p_h})|||$. For this $\bm{v}$, choose $\bm{v}_{h} \in \bm{X}_{0}^{h}$ to be defined as in Lemma \ref{approxlem11}. Then, we have $\mathcal{B}(\bm{u}-\bm{u}_{h}, p-p_{h}, \bm{v}_{h}, 0)=0$ by (\ref{scm11a}a) and (\ref{LSFEA11}a). Hence, since $\nabla \cdot \bm{u}+\kappa^{-1}p=0$ and using (\ref{scm11a}a) again,
\begin{align}\label{rea11}
\mathcal{B}(\bm{u}-\bm{u}_h,p-p_h;\bm{v},q)&=\mathcal{B}(\bm{u}-\bm{u}_h,p-p_h;\bm{v}-\bm{v}_h,q),\nonumber\\
&=(\bm{f},\bm{v}-\bm{v}_h)- a(\bm{u}_h, \bm{v}-\bm{v}_h)
     +  (p_h, \nabla\cdot (\bm{v}-\bm{v}_h))  \nonumber\\
& \quad  - (q, \nabla\cdot \bm{u}) + (q, \nabla\cdot \bm{u}_h) 
  - \kappa^{-1}(q, p) + \kappa^{-1}(q, p_h),\nonumber\\
  & = (\bm{f},\bm{v}-\bm{v}_h)-a(\bm{u}_h, \bm{v}-\bm{v}_h)
     +  (p_h, \nabla\cdot (\bm{v}-\bm{v}_h))  \nonumber\\
& \quad + (q, \nabla\cdot \bm{u}_h + \kappa^{-1} p_{h}),\nonumber\\
&=(\bm{f} - \bm{f}_h,\bm{v}-\bm{v}_h) \,  + \! \sum_{K\in \mathcal{T}_h}\!
\Big\{ \big(\bm{f}_h,(\bm{v} - \bm{v}_h) \big)_{{0,K}} \nonumber\\
&\quad + \! \sum_{E\in \partial K} \!\big \langle \bm{R}_E, \bm{v}-\bm{v}_h \big\rangle_E
 + \big(q, R_{K} \big)_{{0,K}} \Big\} %\nonumber
\end{align}
where $ {\big\langle} \bm{R}_E, \bm{v}-\bm{v}_h {\big\rangle}_E=\int_E \bm{R}_E\cdot (\bm{v}-\bm{v}_h) $. 
Applying Cauchy--Schwarz to (\ref{rea11}) and then using Lemma \ref{approxlem11} gives
\begin{align}\label{rea12}
 ||| (\bm{u}-\bm{u}_h,p-p_h )|||^{2} &\lesssim \mathcal{B}(\bm{u}-\bm{u}_h,p-p_h;\bm{v},q)% &\leq
\nonumber \\
& \lesssim  \; {  |||(\bm{v},q )||| \;
\Bigg(\sum_{K\in \mathcal{T}_h}\Big(\eta_{K}^2+\Theta_K^2\Big)\Bigg)^{1\over 2} }.
\end{align}
\end{proof}
 \begin{theorem}\label{efficie}
 Suppose that  $\rblx{(\bm{u},p) \in \bm{H}^1_E\times \M}$ is the weak solution satisfying \eqref{scm11a} and 
 $(\bm{u}_h,p_h)\in\bm{X}^h_E\times \M^h$ satisfies \eqref{LSFEA11}. Defining  $\eta$ and $\bm{\Theta}$ as in \eqref{errest1}, we have
      \begin{align}\label{elowerbd}
     \eta\lesssim\, |||(\bm{u}-\bm{u}_h, p-p_h)||| +\Theta.
   \end{align}   
 \end{theorem}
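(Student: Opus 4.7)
The plan is to use classical (Verf\"urth-style) bubble function arguments applied termwise to $\eta_K^2=\eta_{R_K}^2+\eta_{E_K}^2+\eta_{J_K}^2$. Because a closely analogous estimator is already treated for rectangular mixed elements in Ref.~\refcite{KPS}, the job here is mainly to supply triangle bubble constructions and to verify that all scaling constants can be made independent of the Lam\'e coefficients, so that summing over $K\in\mathcal{T}_h$ delivers \eqref{elowerbd}.

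I would first dispatch the divergence-residual term $\eta_{J_K}$, which needs no bubble machinery. Using the exact constraint $\nabla\cdot\bm{u}+p/\kappa=0$, I rewrite $R_K|_K=\nabla\cdot(\bm{u}_h-\bm{u})+(p_h-p)/\kappa$ and apply a triangle inequality. The identity $\rho_d=2\mu\kappa/(2\mu+\kappa)$ gives the trivial bounds $\rho_d\le 2\mu$ and $\rho_d\le\kappa$, yielding $\eta_{J_K}^2\lesssim 2\mu\|\nabla(\bm{u}-\bm{u}_h)\|_{0,K}^2+\kappa^{-1}\|p-p_h\|_{0,K}^2$, both of which are pieces of the local energy norm.

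For the momentum interior residual $\eta_{R_K}$, I test against the interior bubble $\bm{\psi}_K=b_K\bm{R}_K$ (with $\bm{R}_K=\bm{f}_h|_K$ constant on $K$), use $\|\bm{R}_K\|_{0,K}^2\lesssim (b_K\bm{R}_K,\bm{R}_K)_{0,K}$, and integrate by parts against the continuous momentum equation $-\nabla\cdot\bm{\sigma}(\bm{u},p)=\bm{f}$. The discrete counterpart $\int_K\bm{\sigma}(\bm{u}_h,p_h):\nabla\bm{\psi}_K$ vanishes because piecewise linear displacement and piecewise constant pressure make $\bm{\sigma}(\bm{u}_h,p_h)$ elementwise constant. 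Standard bubble inverse estimates $\|\bm{\psi}_K\|_{0,K}\lesssim\|\bm{R}_K\|_{0,K}$ and $\|\nabla\bm{\psi}_K\|_{0,K}\lesssim h_K^{-1}\|\bm{R}_K\|_{0,K}$, scaled by $\rho_K^2=h_K^2/(2\mu)$, then give $\eta_{R_K}^2\lesssim\Theta_K^2+(2\mu)^{-1}\|\bm{\sigma}(\bm{u}-\bm{u}_h,p-p_h)\|_{0,K}^2$. The edge-residual bound for $\eta_{E_K}$ is obtained in parallel fashion: I use the edge bubble $b_E$ supported on the patch $\omega_E=\overline K_1\cup\overline K_2$ and a polynomial extension of the constant $\bm{R}_E$; integrating the momentum equation by parts on $\omega_E$ introduces the normal-stress jump $\bm{R}_E$ directly, and scaling by $\rho_E=h_E/(2\mu)$ bounds $\eta_{E_K}^2$ in terms of local energy error, local data oscillation, and the already-controlled $\eta_{R_K}$ contributions on $\omega_E$.

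The only genuinely delicate point is parameter robustness, which reduces to the algebraic bound $(2\mu)^{-1}\|\bm{\sigma}(\bm{v},q)\|_{0,K}^2\lesssim 2\mu\|\nabla\bm{v}\|_{0,K}^2+(2\mu)^{-1}\|q\|_{0,K}^2$ used in Steps 2 and 3, which holds in both the Herrmann and Hydrostatic cases (the extra divergence piece in the Hydrostatic stress is absorbed by the strain term). Combining this with the $\rho_d$ inequalities from Step 1 and with the bounded-overlap property of the patches $\omega_E$ coming from shape regularity, the three local bounds sum over $\mathcal{T}_h$ to give \eqref{elowerbd} with constants depending only on mesh regularity.
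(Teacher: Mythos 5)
Your proposal is correct and follows essentially the same route as the paper: the bound is split into the three component residuals $\eta_{R_K}$, $\eta_{J_K}$, $\eta_{E_K}$, with the divergence residual handled directly via $\rho_d\le 2\mu$ and $\rho_d\le\kappa$, and the interior and edge residuals handled by the standard interior/edge bubble arguments exploiting that $\bm{\sigma}(\bm{u}_h,p_h)$ is elementwise constant --- exactly the content of Lemmas~\ref{efficie12}--\ref{efficieED1}, which the paper establishes by adapting the corresponding bubble-function proofs from Ref.~\refcite{KPS}. Your explicit identification of the parameter-robustness step as the algebraic bound $(2\mu)^{-1}\|\bm{\sigma}(\bm{v},q)\|_{0,K}^2\lesssim 2\mu\|\nabla\bm{v}\|_{0,K}^2+(2\mu)^{-1}\|q\|_{0,K}^2$ (valid in both formulations) is consistent with how the paper absorbs the extra Hydrostatic divergence term.
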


To establish the bound (\ref{elowerbd}), we need to establish efficiency bounds for each of the component residual terms $\eta_{R_K}^{2}$, $\eta_{J_K}^{2}$ and $\eta_{E_K}^{2}$ defined in \eqref{components}. 
\begin{lemma}\label{efficie12}
Let $K$ be an element of $\mathcal{T}_h$. The local equilibrium residual satisfies
\begin{align*}
\eta^2_{R_K}&\lesssim \Big(2\mu \; |\bm{u}-\bm{u}_h|_{1,K}^2+ (2\mu)^{-1}||p-p_h||_{0,K}^2+\Theta_K^2\Big).
\end{align*}
\end{lemma}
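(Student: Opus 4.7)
The plan is to apply the standard Verf\"urth bubble-function technique, keeping careful track of the factors of $2\mu$ hidden inside $\rho_K^2 = h_K^2(2\mu)^{-1}$ so that the final bound is robust in $\lambda$.

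First I would introduce the usual interior bubble $b_K\in H_0^1(K)$ (product of barycentric coordinates), extended by zero outside $K$, and set $\bm{w}_K := b_K \bm{R}_K$. Since $\bm{R}_K=\bm{f}_h|_K$ is a constant vector on $K$, the standard scaled norm equivalences on polynomial spaces give
\begin{align*}
\|\bm{R}_K\|_{0,K}^2 \;\lesssim\; \int_K \bm{R}_K\cdot\bm{w}_K, \qquad
\|\bm{w}_K\|_{0,K} \;\lesssim\; \|\bm{R}_K\|_{0,K}, \qquad
\|\nabla\bm{w}_K\|_{0,K} \;\lesssim\; h_K^{-1}\|\bm{R}_K\|_{0,K}.
\end{align*}

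Next I would use the strong form $-\nabla\cdot\bm{\sigma}(\bm{u},p)=\bm{f}$ to rewrite the right-hand side as
\begin{align*}
\int_K \bm{R}_K\cdot\bm{w}_K
= \int_K (\bm{f}_h-\bm{f})\cdot\bm{w}_K \;-\; \int_K \nabla\cdot\bm{\sigma}(\bm{u},p)\cdot\bm{w}_K.
\end{align*}
A crucial observation is that, in both the Herrmann and Hydrostatic cases, $\bm{\sigma}(\bm{u}_h,p_h)$ is piecewise constant on $K$ because $\bm{u}_h\in\bm{P}_1$ makes $\bm{\varepsilon}(\bm{u}_h)$ and $(\nabla\cdot\bm{u}_h)\bm{I}$ constant, and $p_h\bm{I}$ is constant. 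Hence $\nabla\cdot\bm{\sigma}(\bm{u}_h,p_h)|_K=0$, which allows us to subtract zero and integrate by parts (with the boundary term vanishing since $\bm{w}_K=0$ on $\partial K$):
\begin{align*}
-\int_K \nabla\cdot\bigl[\bm{\sigma}(\bm{u},p)-\bm{\sigma}(\bm{u}_h,p_h)\bigr]\cdot\bm{w}_K
= \int_K \bigl[\bm{\sigma}(\bm{u},p)-\bm{\sigma}(\bm{u}_h,p_h)\bigr]:\nabla\bm{w}_K.
\end{align*}
From the definition \eqref{stress-def}, in both formulations (the extra $-\mu(\nabla\cdot)\bm{I}$ term in the Hydrostatic case being absorbed into the first summand) one has $\|\bm{\sigma}(\bm{u},p)-\bm{\sigma}(\bm{u}_h,p_h)\|_{0,K}\lesssim 2\mu\,|\bm{u}-\bm{u}_h|_{1,K}+\|p-p_h\|_{0,K}$.

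Combining these ingredients, applying Cauchy--Schwarz term by term, using the three bubble-function estimates, then dividing by $\|\bm{R}_K\|_{0,K}$ and squaring, yields
\begin{align*}
\|\bm{R}_K\|_{0,K}^2 \;\lesssim\; \|\bm{f}-\bm{f}_h\|_{0,K}^2 \;+\; h_K^{-2}(2\mu)^2\,|\bm{u}-\bm{u}_h|_{1,K}^2 \;+\; h_K^{-2}\,\|p-p_h\|_{0,K}^2.
\end{align*}
Multiplying through by $\rho_K^2=h_K^2(2\mu)^{-1}$ produces exactly $\eta_{R_K}^2$ on the left, $\Theta_K^2$ from the first summand on the right, and the two energy-error contributions $2\mu|\bm{u}-\bm{u}_h|_{1,K}^2$ and $(2\mu)^{-1}\|p-p_h\|_{0,K}^2$ from the other two, as required. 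The argument is essentially routine; the only point needing attention is the verification that $\nabla\cdot\bm{\sigma}(\bm{u}_h,p_h)$ vanishes elementwise in the Hydrostatic case as well as the Herrmann case, together with the bookkeeping of $2\mu$ factors that guarantees $\lambda$-robustness.
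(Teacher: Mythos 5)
Your proposal is correct and follows essentially the same route as the paper: the paper's proof simply invokes the standard interior-bubble efficiency argument (Lemma 3.5 of Ref.~8), exploiting exactly the two facts you identify, namely that $\nabla\cdot\bm{\sigma}(\bm{u}_h,p_h)$ vanishes elementwise for $\bm{P}_1$--$P_0$ and that $\bm{f}+\nabla\cdot\bm{\sigma}(\bm{u},p)=0$ for the exact solution, with the extra Hydrostatic term absorbed by Cauchy--Schwarz just as you do. Your write-up is a faithful expansion of that argument with the correct $2\mu$ bookkeeping.
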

\begin{proof} The proof follows the same lines as that of Lemma $3.5$ in Ref.~\refcite{KPS}, here using $\bm{R}_{K}= \bm{f}_{h} = \bm{f}_{h} + \nabla\cdot \bm{\sigma}(\bm{u}_{h}, p_{h})$ (since $\bm{u}_{h} \in \bm{P}_{1}$ and $p_{h} \in P_{0}$) and noting that $ \left(\bm{f} + \nabla \cdot \bm{\sigma}(\bm{u}, p)\right)|_{K}=0$ for a classical solution $(\bm{u},p)$ (in both the Herrmann and Hydrostatic formulations).  In the Hydrostatic formulation, equation (3.22) in Ref.~\refcite{KPS} has the additional term $\mu (\nabla \cdot(\bm{u} - \bm{u}_{h}), \nabla \cdot \bm{w})_{K}$. Applying the Cauchy--Schwarz inequality to this term as well as the others, leads to the stated result.
\end{proof}

\begin{lemma}\label{efficieR2}
Let $K \in \mathcal{T}_h$. The local mass conservation residual satisfies
\begin{align*}
\eta^2_{J_K}&\lesssim \Big(2\mu \; |\bm{u}-\bm{u}_h|_{1,K}^2+ (2\mu)^{-1}||p-p_h||_{0,K}^2
+ \kappa^{-1} ||p-p_h||_{0,K}^2\Big).
\end{align*}
\end{lemma}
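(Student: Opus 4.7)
The plan is to exploit the fact that the continuous solution satisfies the pointwise constraint $\nabla\cdot\bm{u} + p/\kappa = 0$ on each element $K$, so that the local mass conservation residual $R_K = \nabla\cdot\bm{u}_h + p_h/\kappa$ can be rewritten as an error quantity and estimated directly, with no need for bubble functions or local inverse estimates.

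First I would note that, since the true solution $(\bm{u},p)$ satisfies (\ref{scm11a}b) pointwise a.e., we may subtract to obtain
\begin{align*}
R_K = \nabla\cdot(\bm{u}_h - \bm{u}) + \kappa^{-1}(p_h - p) \quad \text{on } K.
\end{align*}
Applying the triangle inequality in $L^2(K)$ and squaring (with a factor of 2) yields
\begin{align*}
\|R_K\|_{0,K}^2 \lesssim \|\nabla\cdot(\bm{u}-\bm{u}_h)\|_{0,K}^2 + \kappa^{-2}\|p-p_h\|_{0,K}^2 \lesssim |\bm{u}-\bm{u}_h|_{1,K}^2 + \kappa^{-2}\|p-p_h\|_{0,K}^2.
\end{align*}

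Next I would multiply by $\rho_d = 1/(\kappa^{-1}+(2\mu)^{-1})$ and exploit the two elementary bounds $\rho_d \leq 2\mu$ and $\rho_d \leq \kappa$, both immediate from the definition (since $\rho_d$ is the harmonic-mean-type quantity of $2\mu$ and $\kappa$). The first bound applied to the displacement term gives
\begin{align*}
\rho_d |\bm{u}-\bm{u}_h|_{1,K}^2 \leq 2\mu\, |\bm{u}-\bm{u}_h|_{1,K}^2,
\end{align*}
while the second bound applied to the pressure term gives
\begin{align*}
\rho_d\, \kappa^{-2}\|p-p_h\|_{0,K}^2 \leq \kappa^{-1}\|p-p_h\|_{0,K}^2.
\end{align*}
Combining these yields the claimed inequality (the $(2\mu)^{-1}\|p-p_h\|_{0,K}^2$ contribution on the right-hand side appears automatically in the energy norm and may simply be absorbed, or dropped, in the upper bound).

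There is no real obstacle here: the argument is genuinely short because $R_K$ is already an error residual once the continuous constraint is used, and the parameter $\rho_d$ has been chosen precisely so that both $\rho_d \lesssim 2\mu$ and $\rho_d\kappa^{-2} \lesssim \kappa^{-1}$ hold with constants independent of the Lamé coefficients, which is exactly what robustness requires. The only thing to be careful about is making sure the bound on $\rho_d$ is justified by a one-line calculation (writing $\rho_d = 2\mu\kappa/(2\mu+\kappa)$ makes both inequalities transparent) so that the efficiency constant does not inherit any hidden dependence on $\kappa$ or $\mu$.
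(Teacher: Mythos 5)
Your proposal is correct and follows essentially the same route as the paper's proof: rewrite $R_K$ using the pointwise constraint $\nabla\cdot\bm{u}+\kappa^{-1}p=0$, apply the triangle inequality, bound $\|\nabla\cdot(\bm{u}-\bm{u}_h)\|_{0,K}$ by $|\bm{u}-\bm{u}_h|_{1,K}$, and use $\rho_d\le 2\mu$ and $\rho_d\le\kappa$ from the definition of $\rho_d$. Your explicit remark that $\rho_d=2\mu\kappa/(2\mu+\kappa)$ makes both bounds transparent is a nice clarification of the step the paper leaves as ``follows from the definition of $\rho_d$.''
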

\begin{proof}
Noting that $(\nabla\cdot \bm{u}+ \kappa^{-1} p)|_K=0$ for a classical solution $(\bm{u},p)$, we have
\begin{align*}
\rho_d||\nabla\cdot \bm{u}_{h}+ \kappa^{-1} p_h||_{0,K}^2&=\rho_d||\nabla\cdot (\bm{u}-\bm{u}_{h})+ \kappa^{-1} (p-p_h)||_{0,K}^2\\
&\lesssim \rho_d||\nabla\cdot (\bm{u} -\bm{u}_h ) \, ||_{0,K}^2+ \frac{\rho_d}{\kappa^2} ||( p-p_h )||_{0,K}^2\\
&\lesssim 2\mu \; |\bm{u} -\bm{u}_h |_{1,K}^2+ \kappa^{-1} ||(p-p_h)||_{0,K}^2,
\end{align*}
where the last line follows from the definition  of $\rho_d$ in \eqref{gridparams}.
\end{proof}
\begin{lemma}\label{efficieED1}
Let $K \in \mathcal{T}_h$. The stress jump residual satisfies
\begin{align*}
{\eta_{E_K}^2} \lesssim {\sum_{E\in \partial K} \left(2\mu\, |\bm{u}-\bm{u}_h|_{1,\omega_E}^2
+(2\mu)^{-1}||p-p_h||_{0,\omega_E}^2+\Theta_{\omega_E}^2 \right)},
\end{align*}
where $\Theta_{\omega_E}^2 = \sum_{K\in \omega_E}\Theta_K^2$ is the localised data oscillation term and $\omega_{E}$ is the patch of elements that share the edge $E$.
\end{lemma}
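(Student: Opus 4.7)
The proof will follow the classical edge-bubble argument of Verf\"urth, using the fact that lowest-order $\bm{P}_1$--$P_0$ approximation makes the elementwise divergence of the discrete stress vanish. The plan is to localise on the patch $\omega_E=K_1\cup K_2$ sharing the edge $E$, test against a bubble-weighted lifting of $\bm{R}_E$, integrate by parts, and invoke the equilibrium equation satisfied by $(\bm{u},p)$ together with  Lemma~\ref{efficie12}.

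First, for each interior edge $E$, let $\psi_E$ denote the standard edge bubble supported in $\omega_E$ with $0\leq\psi_E\leq 1$ and $\psi_E|_{\partial\omega_E}=0$. Extend the edge polynomial $\bm{R}_E$ to $\omega_E$ by constant prolongation in the direction normal to $E$, and set $\bm{w}_E:=\bm{R}_E\,\psi_E\in\bm{H}^1_0(\omega_E)$. The usual scaling/inverse properties of $\psi_E$ give
\begin{align*}
\|\bm{R}_E\|_{0,E}^2 \;\lesssim\; \int_E\bm{R}_E\cdot\bm{w}_E\,ds,\qquad
\|\bm{w}_E\|_{0,K}\lesssim h_E^{1/2}\|\bm{R}_E\|_{0,E},\qquad
|\bm{w}_E|_{1,K}\lesssim h_E^{-1/2}\|\bm{R}_E\|_{0,E},
\end{align*}
for $K\in\omega_E$. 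Boundary edges contribute nothing since $\bm{R}_E=0$ there by \eqref{stress-jump-def}.

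Next, integrating by parts elementwise on $\omega_E$ and summing, the boundary terms collapse to the jump across $E$, because $\bm{w}_E$ vanishes on $\partial\omega_E$. Using the continuity of the true normal stress ($\llbracket \bm{\sigma}(\bm{u},p)\bm{n}\rrbracket_E=0$) and the crucial observation that $\nabla\cdot\bm{\sigma}(\bm{u}_h,p_h)|_K\equiv 0$ (because $\bm{u}_h\in\bm{P}_1$ and $p_h\in P_0$ on $K$, and in the Hydrostatic case $\nabla\cdot\bm{u}_h$ is also piecewise constant), this yields an identity of the form
\begin{align*}
2\int_E\bm{R}_E\cdot\bm{w}_E\,ds
 \;=\; \sum_{K\in\omega_E}\!\int_K\!\bm{\sigma}(\bm{u}-\bm{u}_h,p-p_h):\nabla\bm{w}_E
   \;+\;\sum_{K\in\omega_E}\!\int_K\!\big(\bm{R}_K+(\bm{f}-\bm{f}_h)\big)\cdot\bm{w}_E.
\end{align*}
Applying Cauchy--Schwarz, the scaling properties of $\bm{w}_E$, and the pointwise stress bound $\|\bm{\sigma}(\bm{u}-\bm{u}_h,p-p_h)\|_{0,K}\lesssim 2\mu\,|\bm{u}-\bm{u}_h|_{1,K}+\|p-p_h\|_{0,K}$ (valid in both formulations), one obtains after dividing by $\|\bm{R}_E\|_{0,E}$ and weighting by $\rho_E=h_E/(2\mu)$:
\begin{align*}
\rho_E\|\bm{R}_E\|_{0,E}^2
  \;\lesssim\; \sum_{K\in\omega_E}\!\Big(2\mu|\bm{u}-\bm{u}_h|_{1,K}^2
        +(2\mu)^{-1}\|p-p_h\|_{0,K}^2\Big)
        \;+\;\sum_{K\in\omega_E}\!\big(\eta_{R_K}^2+\Theta_K^2\big).
\end{align*}
Summing over $E\in\partial K$ and using Lemma~\ref{efficie12} to absorb each $\eta_{R_K}^2$ into the same error plus oscillation terms delivers the stated inequality, with the patch $\omega_E$ appearing naturally on the right.

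The main obstacle is purely bookkeeping rather than conceptual: one must verify that the scaling $\rho_E=h_E/(2\mu)$ on the left is the correct weight to absorb the stress contribution $2\mu|\bm{u}-\bm{u}_h|_{1,K}^2+(2\mu)^{-1}\|p-p_h\|_{0,K}^2$ symmetrically. The cancellation that makes this work comes from the factor $2\mu$ in $\bm{\sigma}$ combined with the $h_E^{-1/2}$ in the gradient scaling of $\bm{w}_E$; the pressure contribution scales cleanly because it is multiplied by $\bm{I}$ rather than $2\mu$. The Hydrostatic case introduces an additional $\mu(\nabla\cdot(\bm{u}-\bm{u}_h))\bm{I}$ term in $\bm{\sigma}(\bm{u}-\bm{u}_h,p-p_h)$, but since $\|\nabla\cdot\bm{v}\|_{0,K}\le\sqrt{2}|\bm{v}|_{1,K}$, this is absorbed by the $2\mu|\bm{u}-\bm{u}_h|_{1,K}^2$ term without altering the estimate. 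Boundary edges and data oscillation are handled exactly as in the proof of Lemma~3.7 of Ref.~\refcite{KPS}.
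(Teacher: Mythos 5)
Your proof is correct and follows essentially the same route as the paper: the paper's proof defers to Lemma 3.7 of Ref.~\refcite{KPS}, which is precisely the Verf\"urth edge-residual efficiency argument you carry out explicitly---lifting $\bm{R}_E$ via an edge bubble, exploiting $\nabla\cdot\bm{\sigma}(\bm{u}_h,p_h)|_K=0$ and the equilibrium equation, and absorbing the interior residual through Lemma~\ref{efficie12}, with the $\rho_E$ weighting balancing exactly as you verify. (The only quibble is the sign of the $\bm{R}_K+(\bm{f}-\bm{f}_h)$ term in your integration-by-parts identity, which is immaterial once Cauchy--Schwarz is applied.)
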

\begin{proof} The proof follows the same lines as that of Lemma $3.7$ in Ref.~\refcite{KPS}, but  with $\bm{R}_{E}$ defined as in \eqref{stress-jump-def}, replacing $\llbracket (p_{h}\bm{I} - 2 \mu\bm{\varepsilon}(\bm{u}_{h}))\bm{n}\rrbracket_E$ with $\llbracket -\bm{\sigma}(\bm{u}_{h}, p_{h}) \bm{n}\rrbracket_E $ 
and choosing $\Lambda=\rho_{E} \llbracket -\bm{\sigma}(\bm{u}_{h}, p_{h}) \bm{n}\rrbracket_{E}  \, \chi_{E}$. We again exploit the fact that the classical solution $(\bm{u}, p)$ satisfies $-\nabla \cdot \bm{\sigma}(\bm{u}, p) = \bm{f}$ and $\nabla \cdot \bm{\sigma}(\bm{u}_{h}, p_{h})=0.$ To obtain the upper bound for $\rho_{E} \| \bm{R}_{E}\|_{0,E}^{2}$ in the proof of Lemma $3.7$ there is an additional term to bound for each of the terms $T_{1}$ and $T_{3}$. However the same upper bounds hold.
\end{proof}
\rbl{The desired local lower bound} 
(\ref{elowerbd}) follows by consolidating the estimates from Lemma~\ref{efficie12}, Lemma~\ref{efficieR2} and Lemma~\ref{efficieED1}.

\subsection{\rbl{A Poisson problem local error estimator}}\label{SLEPE}   

Having established that the residual error estimator $\eta$ in (\ref{errest1}) is reliable and efficient, the framework established by  Verf\"urth\cite{RV} makes it  straightforward to construct equivalent {\it local problem} estimators that are equally reliable but potentially more efficient. For the Herrmann formulation with $\bm{Q}_{2}$ (biquadratic) displacement approximation, four local problem error estimators were discussed in Ref. \refcite{KPS}. Of these, the so-called Poisson problem estimator \rbl{proved to be} the most  attractive
\rbl{from a computational perspective.} 

\rbl{This strategy will be extended to cover   stabilised $\bm{P}_{1}$--$P_{0}$ approximation herein.}  
\rbl{We compute} a local estimator $\bm{e}_{P,K} \in ({\mathcal P}(K))^{2}$ for the displacement error 
that is \rbl{{\it super-quadratic}} in each component and a local estimator $\epsilon_{S,K} \in P_{1}(K)$ for the pressure error
that is linear. 
\rbl{More specifically,} for the displacement error, we define 
\begin{align}
\mathcal{P}(K) = \textrm{span}\{ \psi_{E}, \, E \in \partial K \cap \left( {\mathcal{E}}_{h} \setminus \partial \Omega\right) \}
\rbl{\>\oplus\> B_T}, 
\end{align}
where $\psi_{E}$ is a quadratic bubble function associated with an interior edge $E$ 
\rbl{and $B_T$ is the space spanned by the cubic bubble function that is zero on the three boundary edges.}
\rbl{We assume} that every triangle $K \in {\mathcal T}_{h}$ has at least two edges in the interior of $\Omega$.  
\rbl{See} Kay \& Silvester\cite{DD} (and references therein) where the same error estimation strategy 
\rbl{is applied to} Stokes problems. 
The Poisson problem estimator is now defined by
$$\eta_{P} = \sqrt{\sum_{K \in {\cal T}_{h}} \eta_{P,K}^{2}},$$
where the local contributions are given by
\begin{align}\label{poiloc1}
\eta_{P,K}^2= 2\mu \, ||\nabla \bm{e}_{P,K}||^2_{0,K} +\rho_d^{-1}||\epsilon_{P,K}||^2_{0,K},
\end{align}
and $(\bm{e}_{P,K},\epsilon_{S,K})\in (\mathcal{P}(K))^{2} \times {P}_1(K)$ is the solution to the following problem
\begin{subequations}  \label{poiloc}
\begin{align}\label{poiloc2}
2\mu \, (\nabla\bm{e}_{P,K},\nabla\bm{v})_K &=(\bm{R}_K,\bm{v})_K
-\sum_{E\in\partial K} \langle \bm{R}_E, \bm{v} \rangle_E, \quad \forall \bm{v}\in (\mathcal{P}(K))^{2}, \\
\rho_d^{-1}(\epsilon_{P,K}, q)_K &=(R_K,q)_K, \quad \forall q\in P_1(K).\label{poiloc3}
\end{align}
\end{subequations}  
Recall that $\rho_{d}$ is defined in \eqref{gridparams}, $\bm{R}_{K}$ and $R_{K}$ are defined in \eqref{elt_resid} and $\bm{R}_{E}$ is defined in \eqref{stress-jump-def}. With the exception of $\bm{R}_{K}$, these quantities are slightly different depending on which mixed formulation is used. In both cases, (\ref{poiloc2}) decouples into a
pair of local \rbl{$4 \times 4$} Poisson problems and since $R_{K} \in P_1(K)$, the
solution of (\ref{poiloc3}) is immediate: $\epsilon_{P,K} = \rho_d R_K = \rho_d (\nabla\cdot \bm{u}_h + \kappa^{-1} p_h)$. Hence, (\ref{poiloc1}) simplifies to 
$$\eta_{P,K}^2 =2\mu \, ||\nabla \bm{e}_{P,K}||^2_{0,K} + \rho_d || \nabla\cdot \bm{u}_h + \kappa^{-1} p_h ||^2_{0,K}.$$
We note that this strategy of decoupling the components of local problem error estimators in a mixed setting it not new; it was pioneered  by Ainsworth \& Oden\cite{MJ}. 
\rbl{Using the arguments that are} sketched in Ref.~\refcite{KPS}, \rbl{the equivalence result}
$$\eta_{P,K} \lesssim \eta_{K} \lesssim \eta_{P,K}, \qquad K \in {\mathcal T}_{h},$$
\rbl{is easily established.} 
%where $\eta_{K}$ is the local contribution to the residual estimator $\eta$. For brevity, we omit the details.}

%Another attractive feature is that the stability of the local problem (\ref{poiloc}) is guaranteed---there is no need to construct compatible correction spaces. 

\section{Computational results}\label{Numres}

\rbl{In this section we} 
compare the performance of the estimators $\eta$ and $\eta_P$ for the Herrmann and Hydrostatic formulations of three test problems. 
\rbl{All results} were computed using locally stabilised $\bm{P}_{1}$--$P_{0}$ approximation with software adapted from the MATLAB 
toolbox TIFISS\cite{TIFISS}. To define the stabilisation term, we group the elements in the meshes into disjoint macroelements 
consisting of four neighbouring triangles, {with a central element connected to three neighbours\cite{ks92}}.  
In some experiments we use uniform meshes and in others we use the local contributions 
$\eta_{K}$ and $\eta_{P,K}$ to drive \emph{adaptive} mesh refinement. 
More precisely, starting with an initial mesh $\mathcal{T}_{0},$ we apply the iterative refinement loop
\begin{align*}
\mbox{Solve}\rightarrow\mbox{Estimate}\rightarrow\mbox{Mark}\rightarrow\mbox{Refine}
\end{align*} 
to generate a sequence of (nested) regular meshes $\{\mathcal{T_\ell}\}$ with mesh size $h_{\ell}$. For each $\mathcal{T_\ell}$ and the associated finite element approximation, we compute $\eta_{\ell}^{2}=\sum_{K\in {\mathcal{T}}_{\ell}} \eta_{K}^{2}$ (if using the residual estimator), or else replace $\eta_{K}$ with $\eta_{P,K}$ (if using the Poisson estimator). Then, in the usual way\cite{Doerfler}, using a bulk parameter $\theta\in(0,1)$ (here $\theta=1/2$), we determine a minimal subset $\mathcal{M}_\ell$ of marked triangles such that $\sum_{K\in {\mathcal{M}}_{\ell}} \eta_{K}^{2} \ge \theta\eta_\ell^2$ (and similarly with $\eta_{P,K}$).  Mesh refinement is then done using the red-green-blue strategy\cite{RV}. We denote the number of degrees of freedom associated with the mesh $\mathcal{T}_{\ell}$ by $N_{\ell}.$ Hence, for uniform meshes we have $\mathcal{O} (N^{-r}_{\ell})\approx \mathcal{O}(h^{2r}_{\ell})$ where $r>0$.
{From Theorem \ref{apriori-thm} we know that, if the solution} $(\bm{u}, p)$ is sufficiently smooth, \rbl{then the 
energy error  $e=|||(\bm{u}-\bm{u}_{h}, p-p_{h}) |||$ will decay to zero with} rate $r=0.5$. 

\subsection{\rbl{An analytic solution}}
The first test problem is taken from Ref.~\refcite{CJ}. We choose $\Omega=(0,1)\times (0,1)$ and a zero essential boundary 
condition; \rbl{that is,}  $\bm{g}=\bm{0}$ on $\partial \Omega$. In addition, 
\begin{align*}
\bm{f}= \left(\begin{array}{c} -2\mu\pi^3\cos(\pi y)\sin(\pi y)(2\cos(2\pi x) -1) \\ 2\mu\pi^3\cos(\pi x)\sin(\pi x)(2\cos(2\pi y)-1)
\end{array} \right).
\end{align*}
The exact solution is $p=0$ and $\textbf{u}=(u_1,u_2)^{\top}$
where
\begin{align*}
u_1&=\pi\cos(\pi y)\sin^2(\pi x)\sin(\pi y),\quad
u_2= -\pi\cos(\pi x)\sin^2(\pi y)\sin(\pi x).
\end{align*}
Figures~\ref{Ex1mu100pes} and \ref{Ex1mu100res} show the convergence behaviour of the exact error $e$  as well as the estimated errors obtained with $\eta_{P}$ and $\eta$, respectively, using adaptively generated meshes.  
(The initial mesh ${\mathcal{T}}_{0}$ was \rbl{generated} with $N_{0}=1,090$ degrees of freedom.) 
Here, $\mu$ is fixed and we consider two values of the Poisson ratio $\nu$. The estimated errors converge to zero at the optimal rate ($r=0.5$). \rbl{While both estimators are obviously efficient and reliable for either formulation, the results in Figure \ref{Ex1mu100pes} 
show that the Poisson estimator is the more accurate of the two---the effectivity indices} for the Poisson estimator 
are close to unity even when $\nu \to 1/2$. 
%However, the residual estimator over-estimates the true error. 
 Identical results (not reported) were obtained when the experiments were repeated with $\mu=1$ and $\mu=0.01$.  We conclude that both \rbl{estimation strategies} are robust with respect to variations in  the parameters  $\mu$ and $\nu$.

\begin{figure}[th!]
%\subfigure[]{
\centering
\includegraphics[width=.41\textwidth]{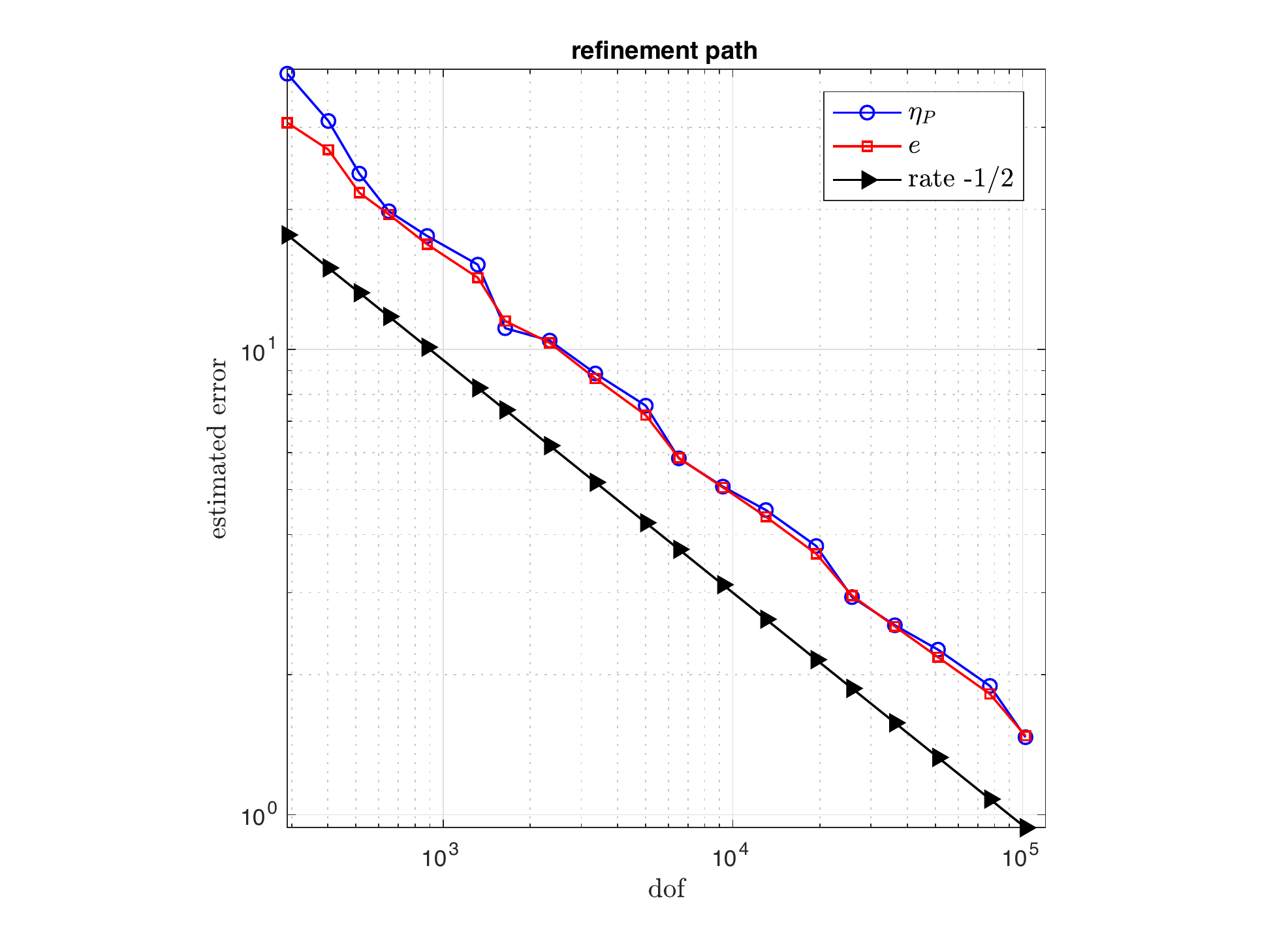}
\label{ex1mu100nu4}
\centering
\includegraphics[width=.41\textwidth]{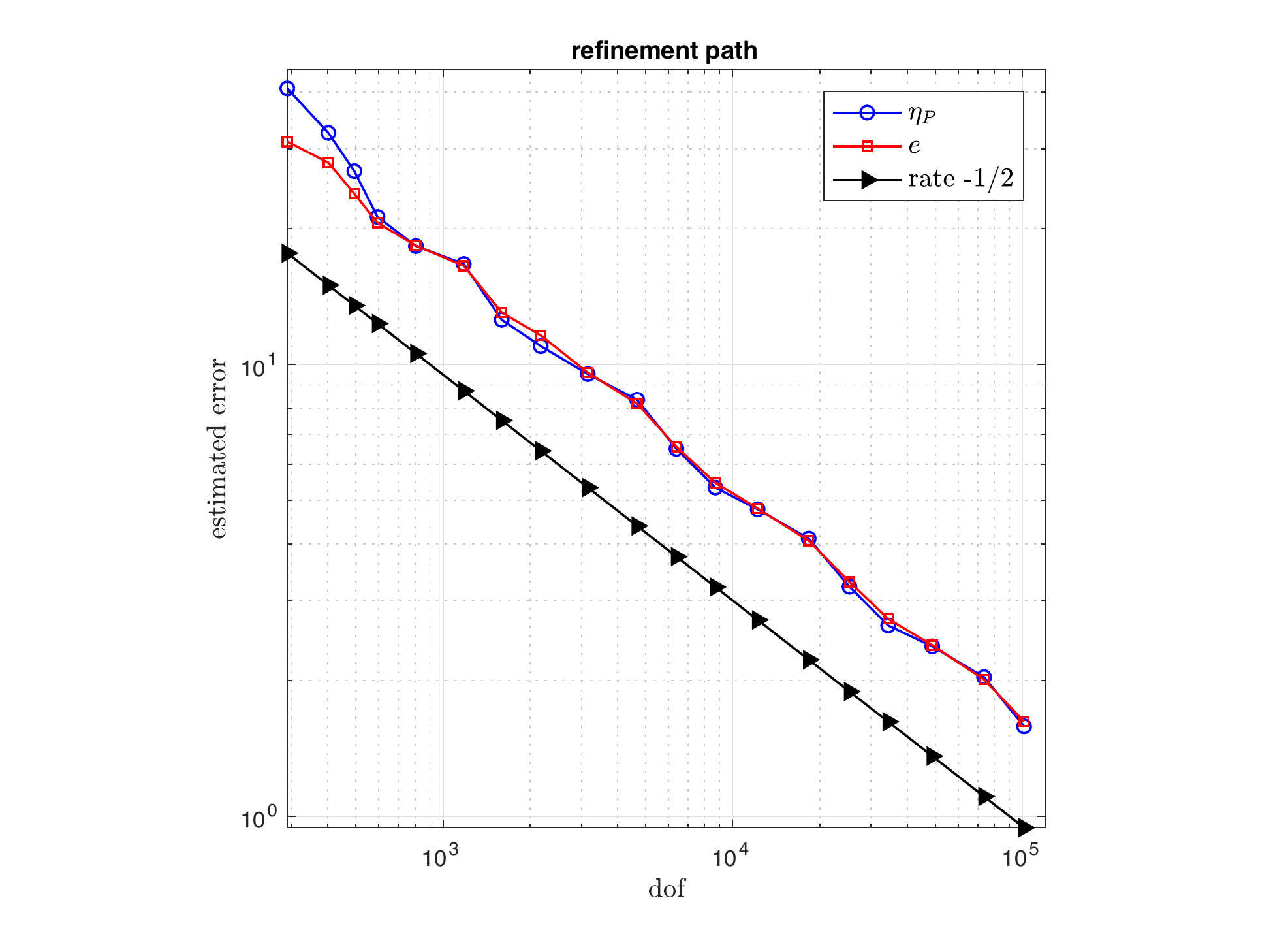}
\label{ex1mu100nu49999}
\centering
\includegraphics[width=.41\textwidth]{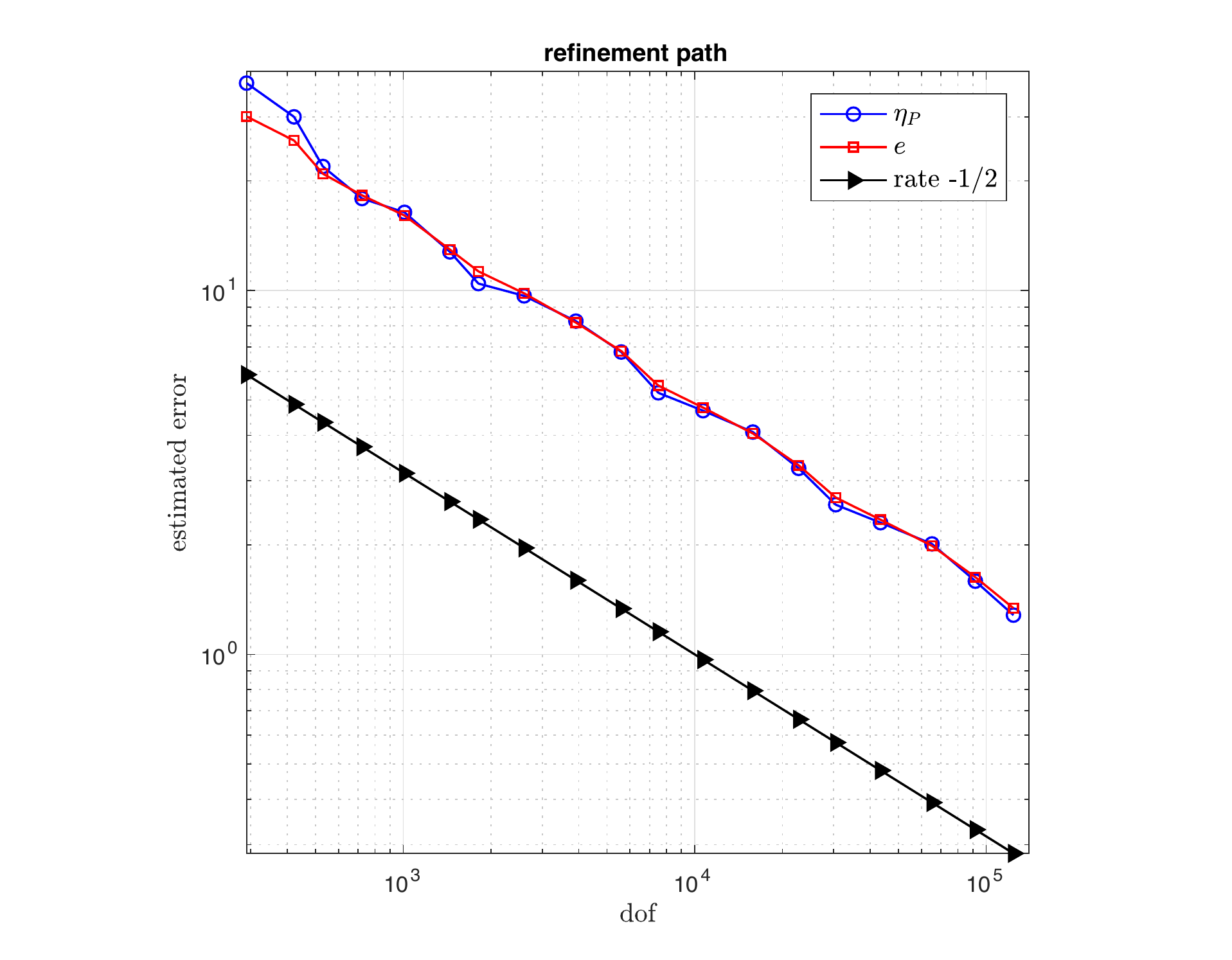}
\label{ex1mu100nu4hydro}
\centering
\includegraphics[width=.41\textwidth]{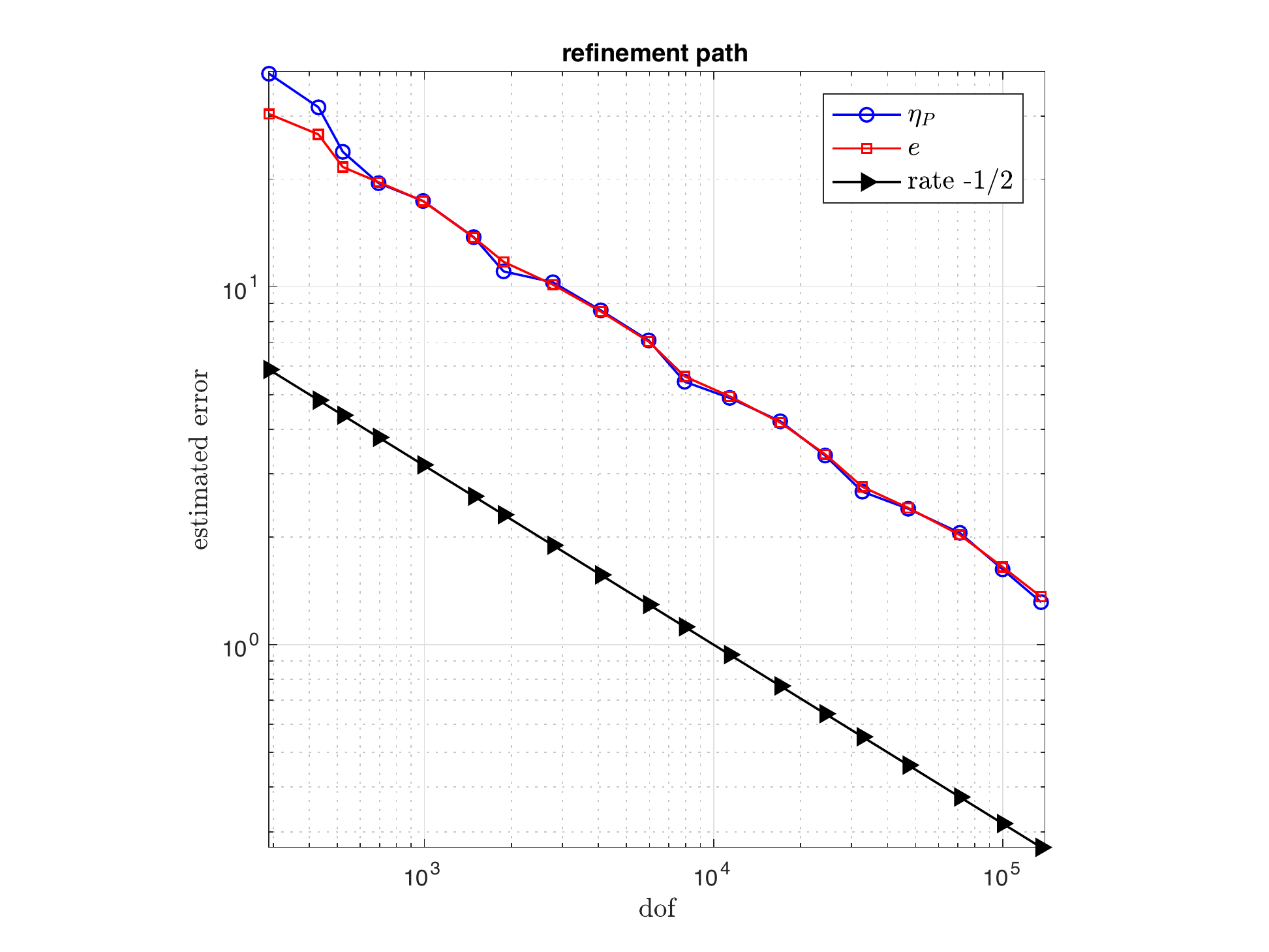}
\label{ex1mu100nu49999hydro}
\caption
{Exact ($e$) and estimated (using the local Poisson estimator $\eta_{P}$) energy errors computed using adaptive meshes, \rbl{for}  Herrmann (top) and Hydrostatic (bottom) formulations of test problem 1, 
with $\mu=100$  and $\nu=0.4$ (left);    $\mu=100$  and $\nu=0.49999$  (right).}
\label{Ex1mu100pes}

\includegraphics[width=.38\textwidth]{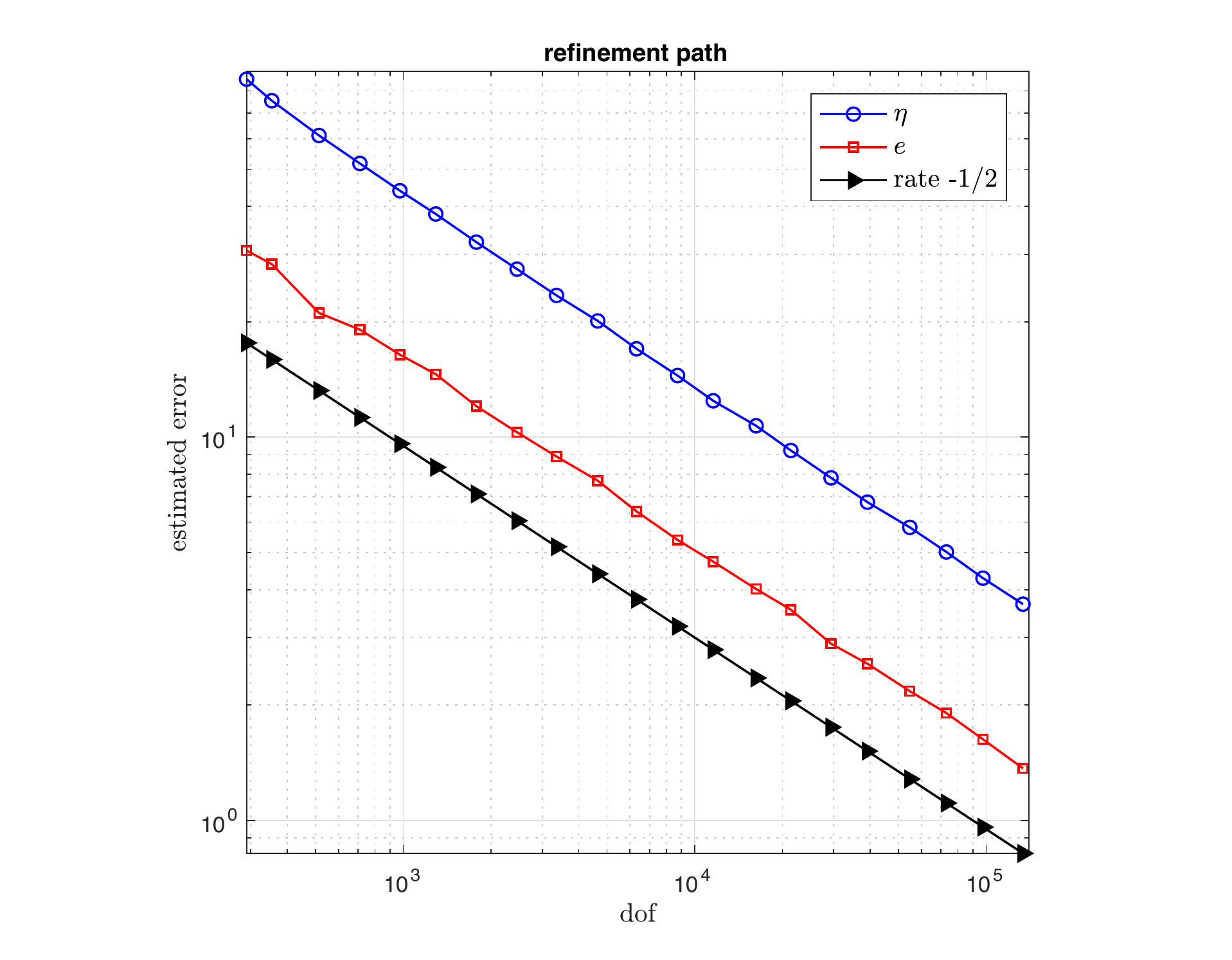}
\label{ex1mu100nu4res}
\centering
\includegraphics[width=.41\textwidth]{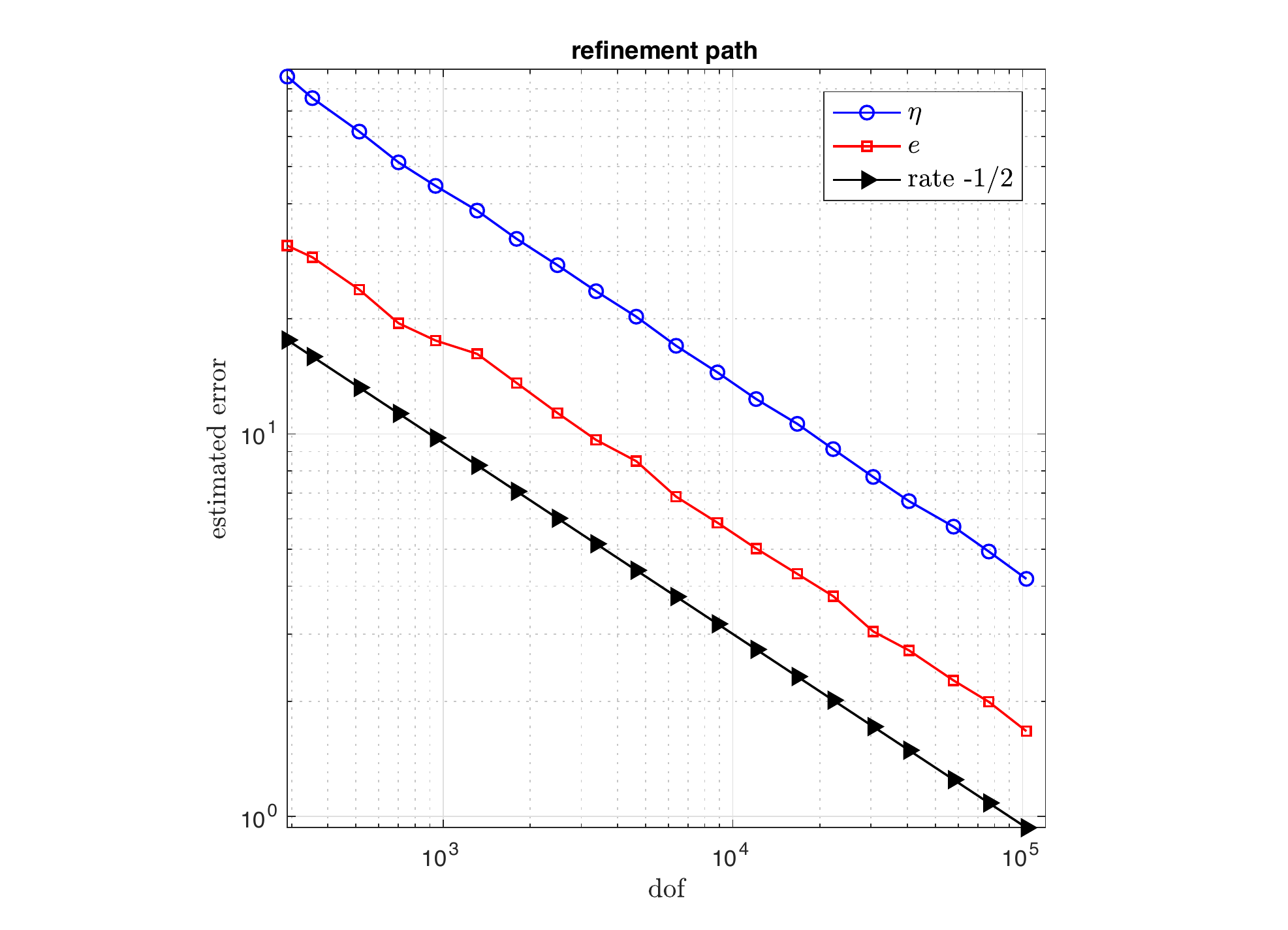}
\label{ex1mu100nu49999res}
\centering
\includegraphics[width=.41\textwidth]{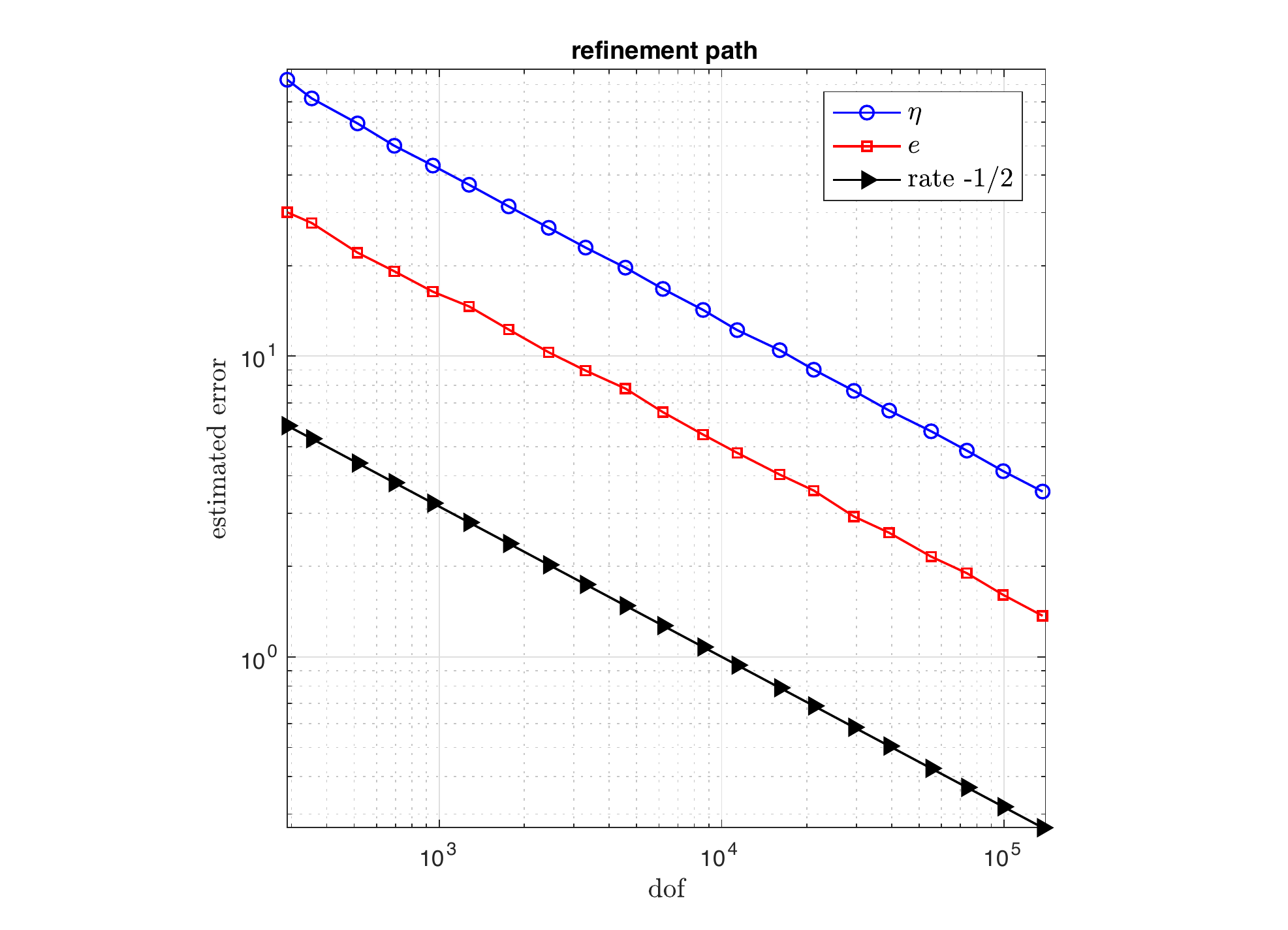}
\label{ex1mu100nu4reseff}
\centering
\includegraphics[width=.41\textwidth]{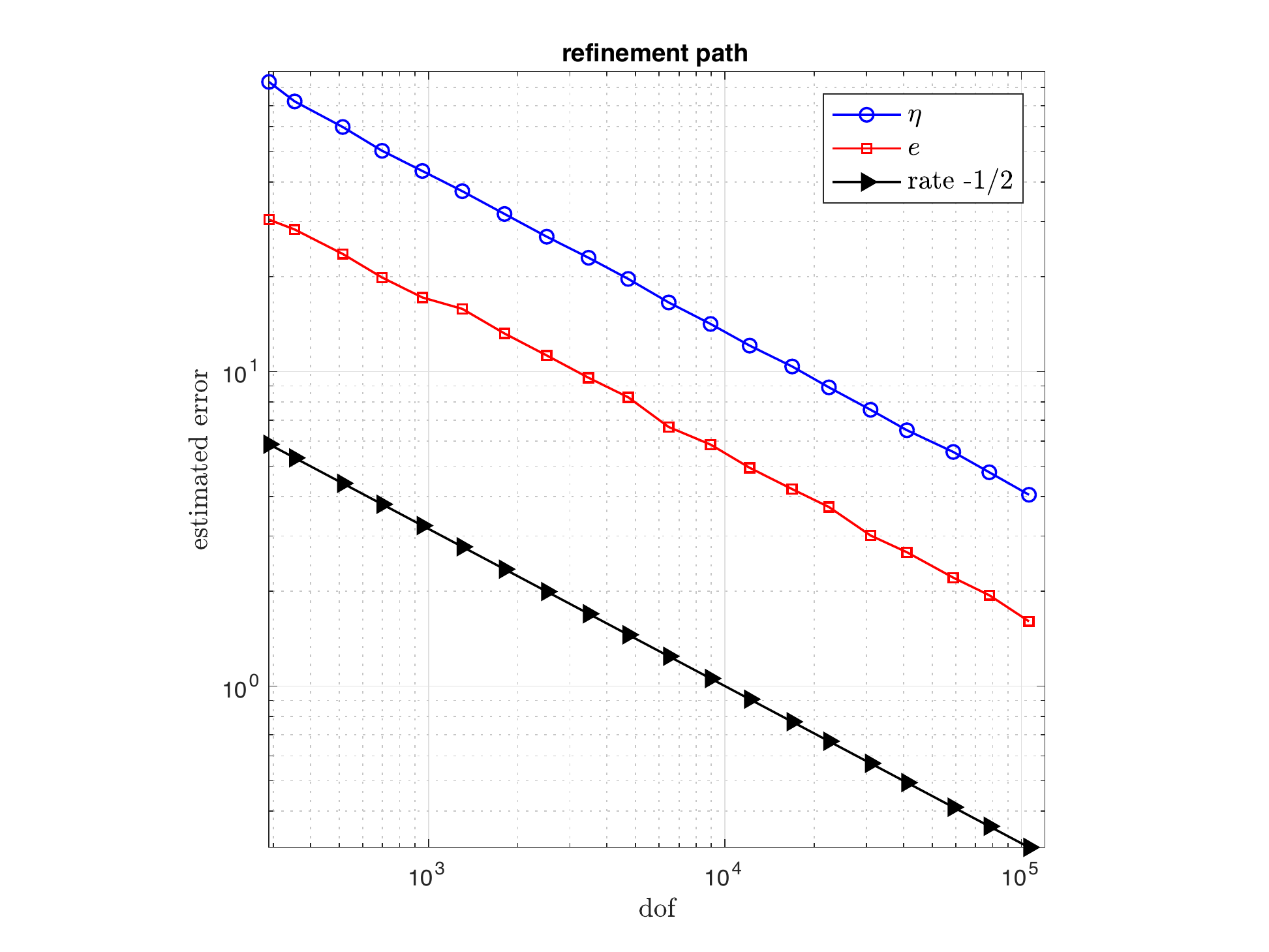}
\label{ex1mu100nu49999reseff}
\caption{
Exact ($e$) and estimated (using the residual estimator $\eta$) energy errors computed using adaptive meshes, \rbl{for} 
 Herrmann (top) and Hydrostatic (bottom) formulations of test problem 1,  
with $\mu=100$  and $\nu=0.4$ (left);    $\mu=100$  and $\nu=0.49999$  (right).}
\label{Ex1mu100res}
\end{figure}

%\begin{figure}[th!]
%\centering
%%\includegraphics[width=.38\textwidth]{./P1P0_mu100nu4_Resterreff}
%%\label{ex1mu100nu4reshydro}
%%\centering
%\includegraphics[width=.45\textwidth]{./P1P0_mu100nu49999_Resterreff}
%\label{ex1mu100nu49999reshydro}
%%\centering
%%\includegraphics[width=.38\textwidth]{./P1P0_mu100nu4_Resterreffhydro}
%%\label{ex1mu100nu4reseffhydro}
%\centering
%\includegraphics[width=.45 \textwidth]{./P1P0_mu100nu49999_Resterreffhydro}
%\label{ex1mu100nu49999reseffhydro}
%\caption
%{Effectivity indices for the estimator $\eta$ computed using adaptive meshes for the Herrmann (left) and Hydrostatic (right) formulations of test problem 1, with $\mu=100$  and $\nu=0.49999$.  }
%\label{Ex1mu100reseff}
%\end{figure}

\subsection{\rbl{A nonsmooth solution}}
The second \rbl{test problem is taken from} Ref.~\refcite{wihler2004locking}. 
Again, $\Omega=(0,1)\times (0,1)$ but now $\bm{f}=\bm{0}$ and we impose the 
condition  $\bm{u}=(g,0)^{\top}$ on $\partial \Omega$, where
\begin{align*}
 g=\Big\{\begin{array}{lc}
(1-4(x-\frac{1}{2})^2)^{\frac{1}{2}+\alpha}, &\mbox{on }\; [0,1] \times \{1\},\\
 0,& \hbox{elsewhere on } \partial \Omega.
\end{array} 
\end{align*}
If $\alpha\in(0,\frac{1}{2})$, then the displacement exhibits $H^{\frac{3}{2}+\alpha}$--regularity. Specifically, there are 
singularities at the {top two corners of the domain}. We set the specific value $\alpha=0.1$ so that $\bm{u} \in \rbl{\bm H^{1.6}}(\Omega)$. 
This lack of smoothness is reflected in the convergence behaviour of the estimated energy error. 
Results obtained with the Poisson estimator $\eta_P$  on \emph{uniformly} refined meshes are shown in Figure~\ref{Ex2nu4uni}. 
 Our results suggest that for both the Herrmann and Hydrostatic formulations, the error converges to zero at \rbl{the anticipated}
 suboptimal   rate ($\rbl{r= 0.3}$). However, when we use \emph{adaptively} refined meshes, for both the Herrmann and Hydrostatic 
 formulations, we recover the optimal convergence rate \rbl{of  $r=1/2$}, \rbl{as shown in Figure \ref{Ex2c2}.} 
\rbl{Starting from an  initial mesh with  $N_{0}=1,090$ degrees of freedom, 
the singular solution behaviour is detected  and strong refinement occurs near the top corners. 
Figure \ref{Ex2c2adamesh} shows the meshes that are generated at the first refinement step where  $N_{\ell} \ge 10^{4}$.}

\begin{figure}
\centering
\includegraphics[width=.47\textwidth]{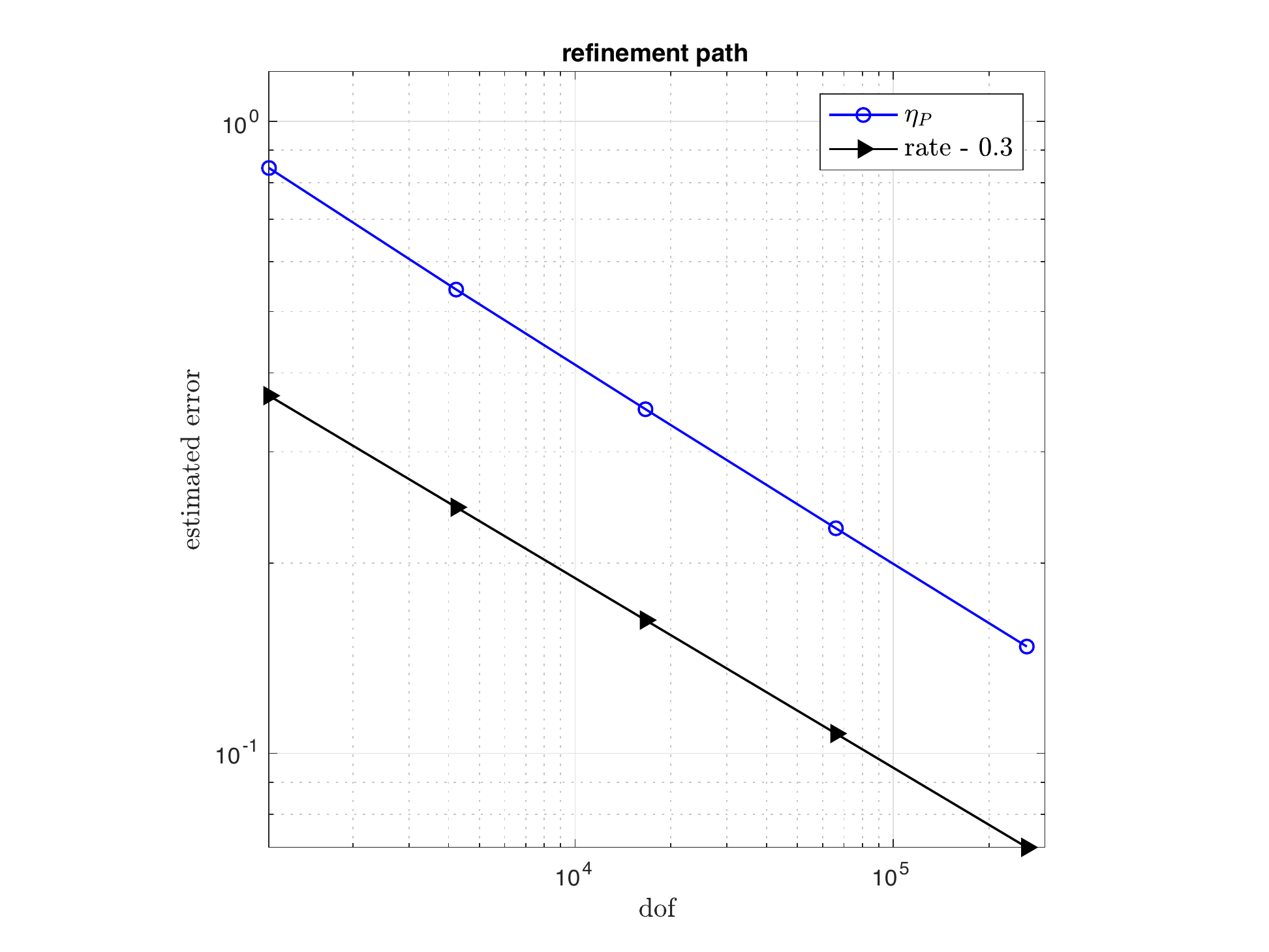}
\label{exc2nu4PEEuni}
\includegraphics[width=.46\textwidth]{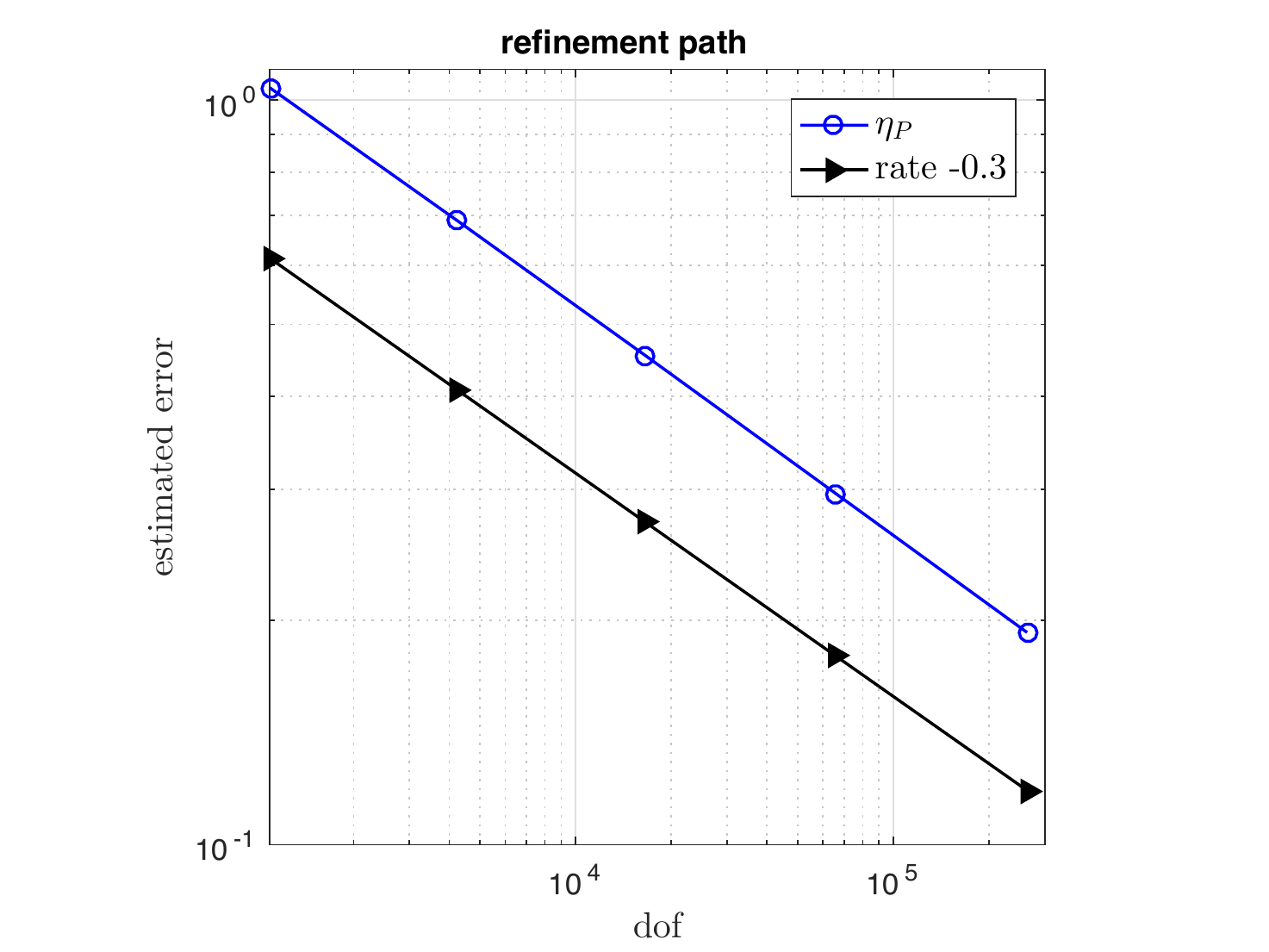}
\label{exc2nu49999PEEuni}
\caption
{Estimated energy errors (using the estimator $\eta_{P}$) computed with \emph{uniform} meshes, \rbl{for}
 Herrmann (left) and Hydrostatic (right) formulations of test problem 2 with  $\mu=1, \nu=0.4$.}
%(top); $\nu=0.4999$ (bottom)}
\label{Ex2nu4uni}
%\end{figure}

%\begin{figure}
%\centering
\includegraphics[width=.44\textwidth]{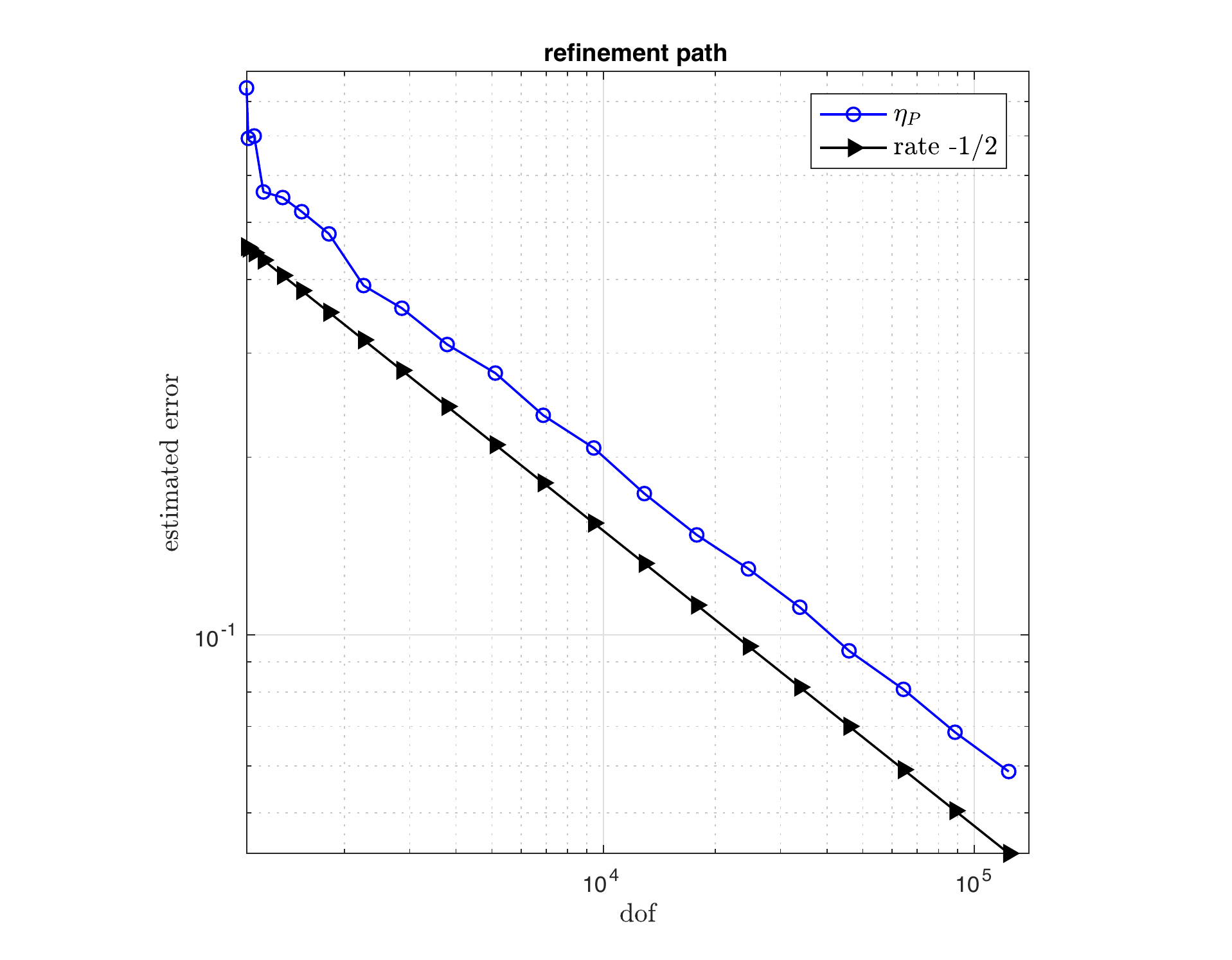}
%\label{exc2nu4}
%\centering
\includegraphics[width=.43\textwidth]{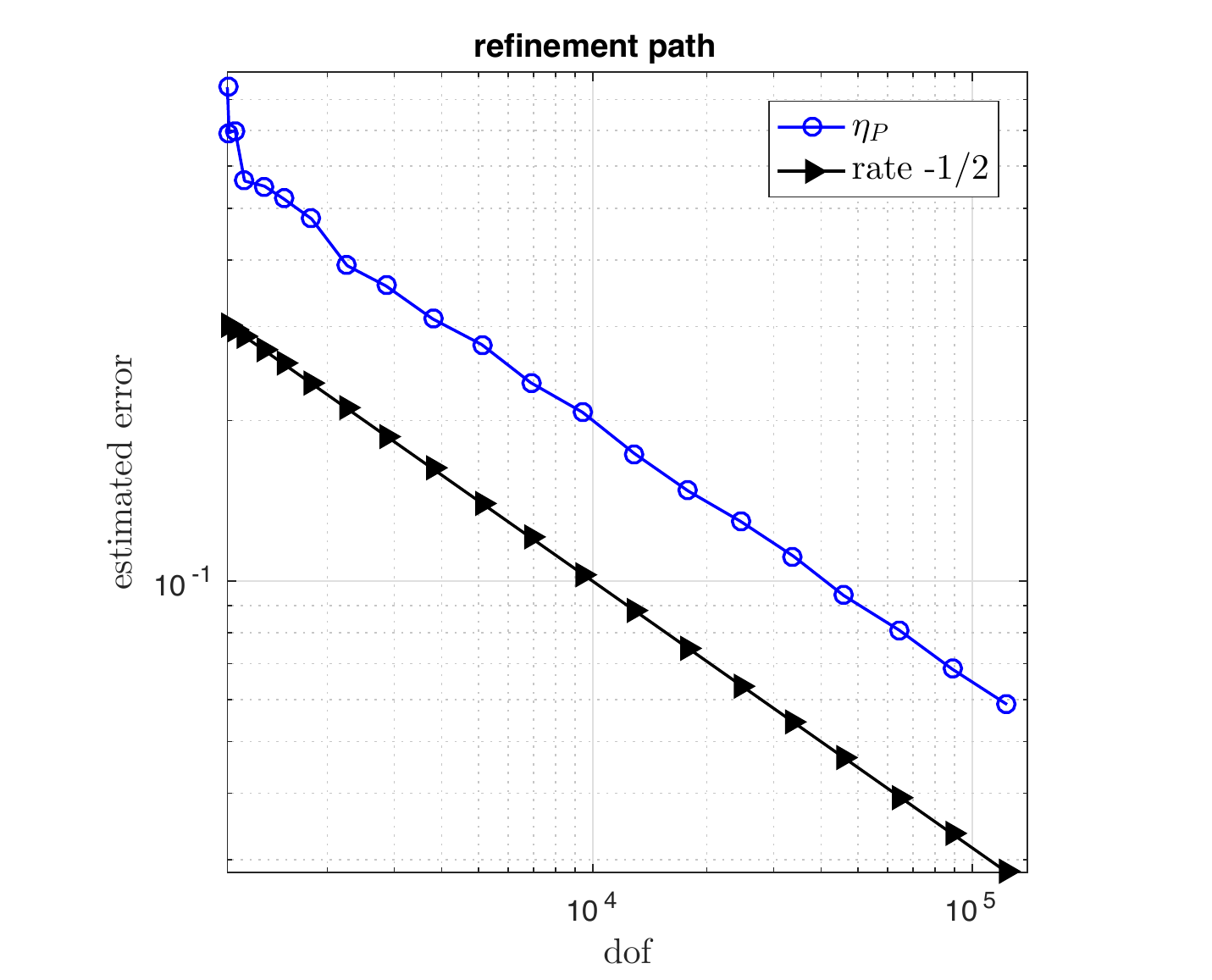}
%\label{exc2nu49999}
\centering
\includegraphics[width=.45\textwidth]{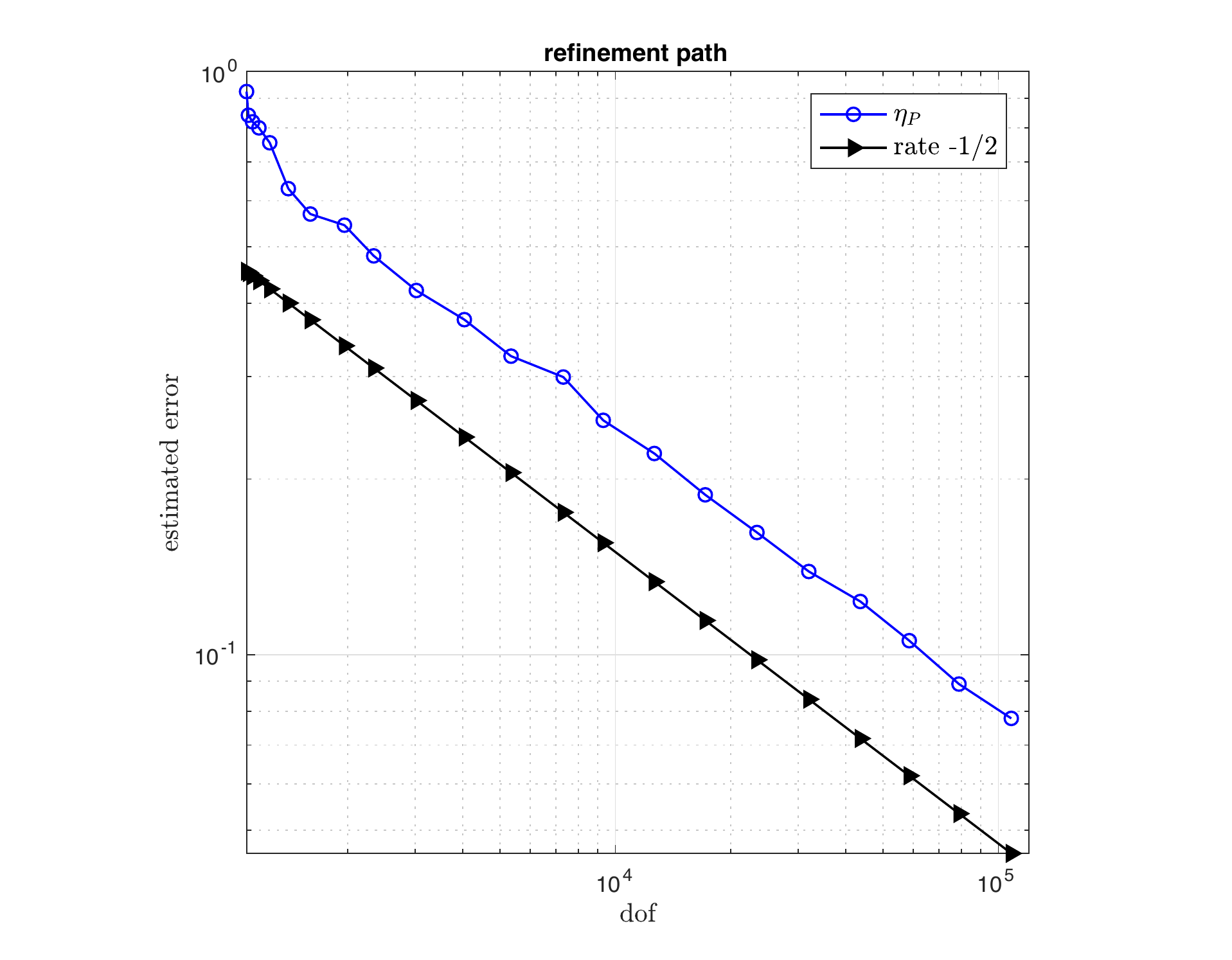}
%\label{exc2nu4hyd}
%\centering
\includegraphics[width=.45\textwidth]{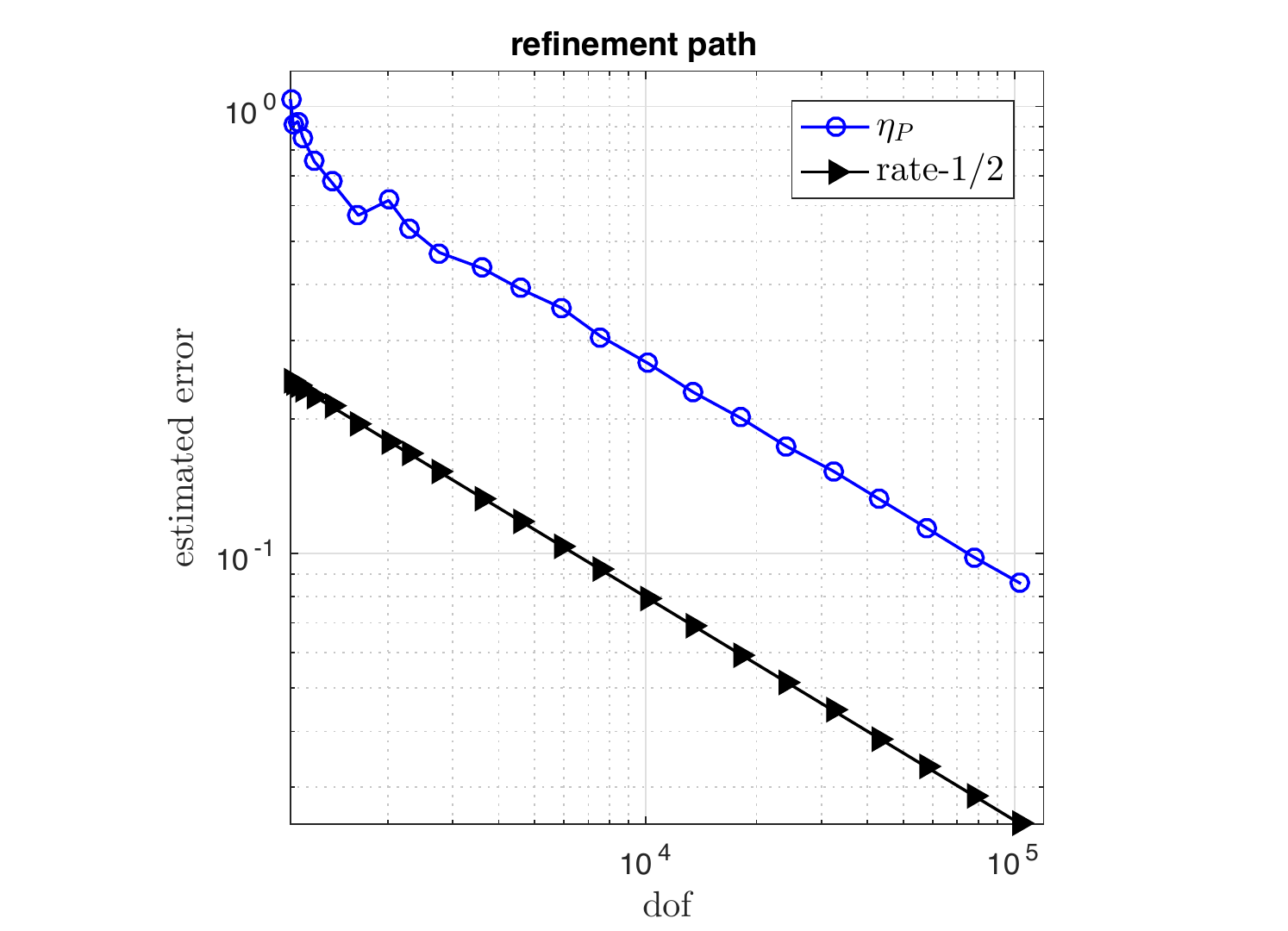}
%\label{exc2nu49999hyd}
\caption{Estimated energy errors (using $\eta_{P}$) computed using \emph{adaptive} meshes, \rbl{for}
Herrmann (top) and Hydrostatic (bottom) formulations of test problem 2 with $\mu=1$  and $\nu=0.4$ (left);  $\mu=1$  
and $\nu=0.49999$ (right).}
\label{Ex2c2}
\end{figure}

\begin{figure}[ht!]
\centering
%\includegraphics[width=.36\textwidth]{./P1P0_mu1nu4_Pestmeshdissing10146}
%\label{exc2nu4ada}
%\centering
\includegraphics[width=.46\textwidth]{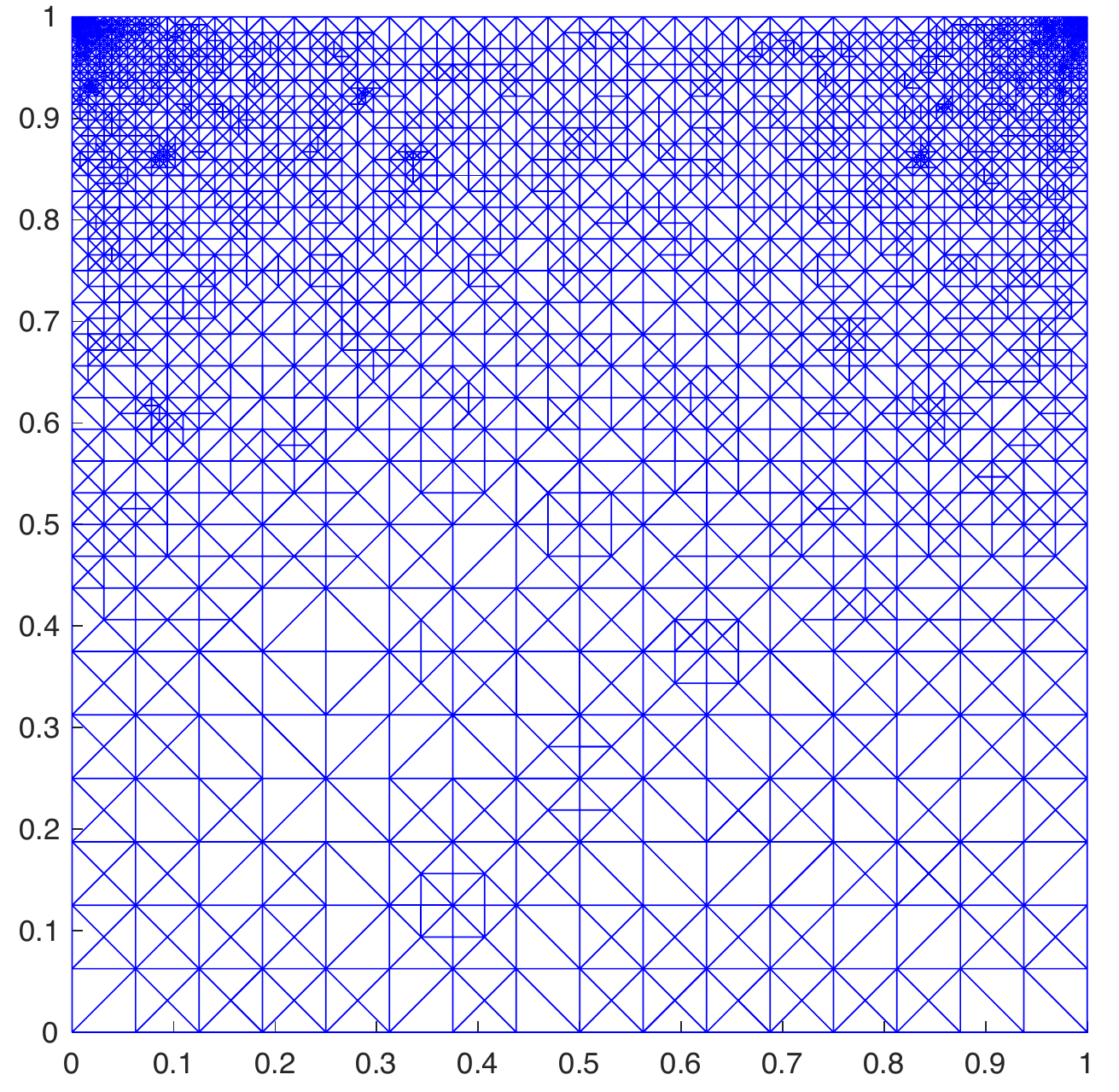}
\label{exc2nu49999ada}
\centering
%\includegraphics[width=.36\textwidth]{./P1P0_mu1nu4_Pestmeshdissing12401hydro}
%\label{exc2nu4hydada}
%\centering
\includegraphics[width=.45\textwidth]{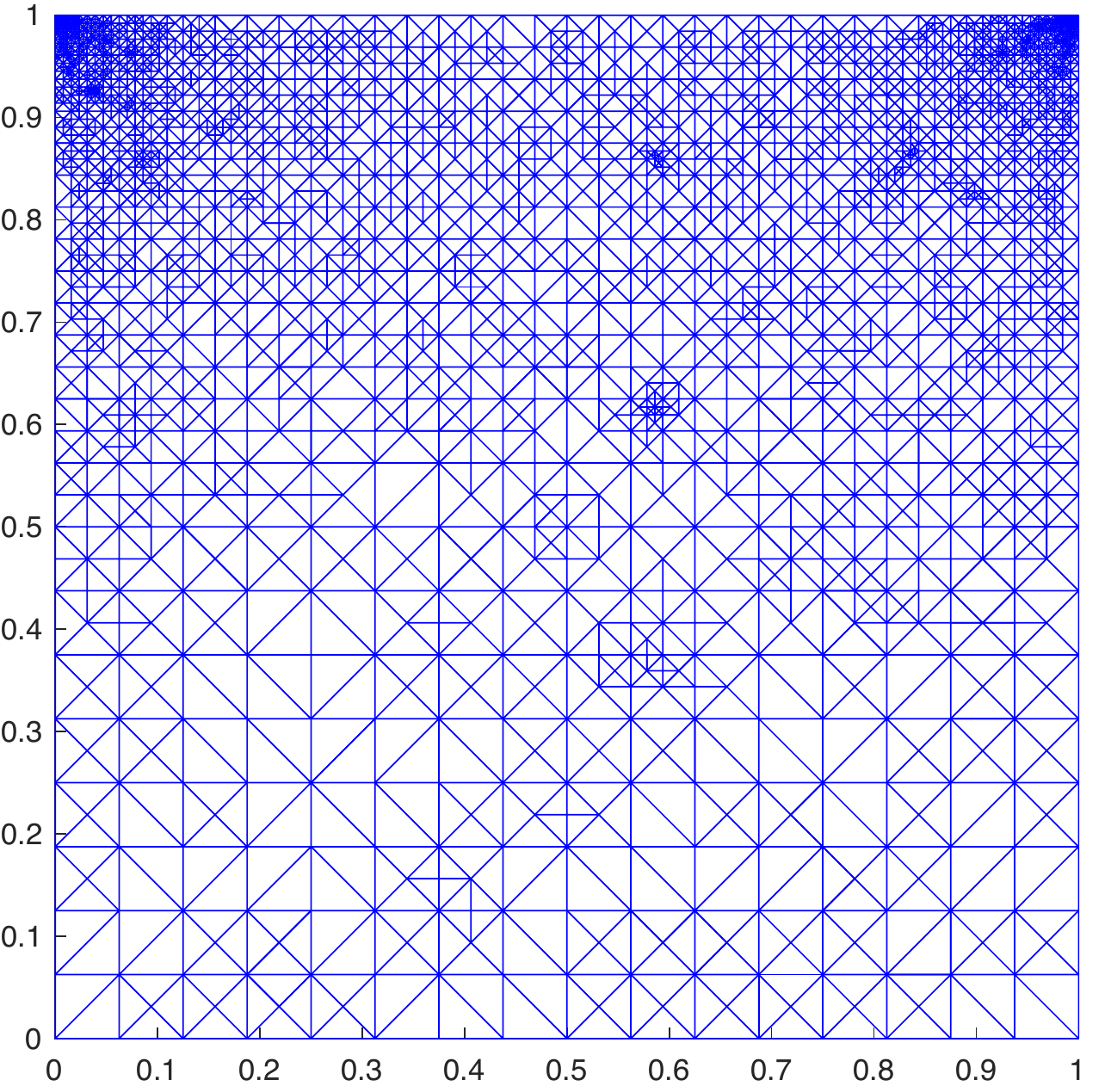}
\label{exc2nu49999hydada}
\caption
{Adaptive meshes generated using $\eta_{P}$ \rbl{for} 
Herrmann (left) and Hydrostatic (right) formulations of test problem 2 with $\nu=0.49999$.
 $N_{\ell}=12,762$ (left), and $N_{\ell}=11,846$ (right).}
\label{Ex2c2adamesh}
\end{figure}

\subsection{\rbl{A singular solution}}
\rbl{To conclude, we discuss a test problem that is considered} in Refs.~\refcite{CJ} and \refcite{wihler2004locking}. 
\rbl{The problem is posed in}  an L-shaped domain $\Omega =(-1,1)\times (-1,1)\setminus (-1,0]\times(-1,0]$. In polar coordinates, the exact displacement is 
\begin{align*}
\bm{u} = \frac{r^\alpha}{2\mu} \left( \begin{array}{c} -(\alpha+1)\cos((\alpha+1)\phi)+(C_2-\alpha+1)C_1\cos((\alpha-1)\phi) \\
(\alpha+1)\sin((\alpha+1)\phi)+(C_2+\alpha-1)C_1\sin((\alpha-1)\phi) \end{array} \right),
\end{align*}
 where $\alpha=0.544483736782$ is a positive solution of $\alpha\sin(\omega)+\sin(2\omega\alpha)=0$ with 
 \begin{align*}
\omega=\frac{3\pi}{4}, \quad  C_1=-\frac{\cos((\alpha+1)\omega)}{\cos((\alpha-1)\omega)},\quad C_2=\frac{2(\lambda+2\mu)}{\lambda+\mu}.
 \end{align*}
The body force is $\bm{f}=\bm{0}$ and the \rbl{nonzero} essential boundary data $\bm{g}$ \rbl{is represented by the piecewise linear interpolant of the given solution}. \rblx{Note that $\int_{\partial \Omega}  \bm{g} \cdot  \bm{n} \, ds \neq 0$.}
To compute the Lam\'{e} constants $\lambda$ and $\mu$, we choose $E=10^5$ and \rbl{set $\nu=0.4$ or $0.49999$}. 
Note that the exact displacement $\bm{u}$ is analytic inside the domain $\Omega$ but $\nabla \bm{u}$ is singular at the origin, so $\bm{u}\notin \bm{H}^2(\Omega)$.  This lack of smoothness is  reflected in the convergence behaviour of the estimated energy error. Results computed with the Poisson estimator $\eta_P$ on \emph{uniform} meshes are shown in Figure~\ref{LshapeE105uni}.  
As \rbl{in the second test problem, we observe the estimated errors converge at a suboptimal rate (here
$r\approx 0.27$)}.
{Moreover, when we use \emph{adaptively} refined meshes, we recover the optimal rate of convergence of $r=0.5$.
This is shown in Figure~\ref{LshapeE105ada}.} 
The singular solution behaviour is detected and strong refinement is \rbl{generated around the re-entrant corner}.
 {While the effectivity indices in Figure~\ref{LshapeE105ada} 
 are not  quite as impressive as those in Figure~\ref{Ex1mu100pes} they remain close to unity 
 (approximately 1.35 when  $\nu=0.4$ and  1.6  when $\nu=0.49999$).}
{We infer from these results  that  $\eta_P$ provides \rblx{an} efficient  and reliable error estimate for both 
Herrmann and Hydrostatic  formulations.}
%{The effectivity indices are close to unity even when $\nu \to 1/2$.

%Figure \ref{LshapeE105adamesh} shows the adaptive meshes generated with $\eta_{P}$. The singular solution behaviour is detected and strong refinement occurs near the re-entrant corner. \rrd{The initial mesh $\mathcal{T}_{0}$ was chosen to have $N_{0}=834$ degrees of freedom and the meshes shown in Figure \ref{LshapeE105adamesh} relate to the first iteration where we observe $N_{\ell} \ge 10^{4}.$} 
\begin{figure}
\centering
\includegraphics[width=.4\textwidth]{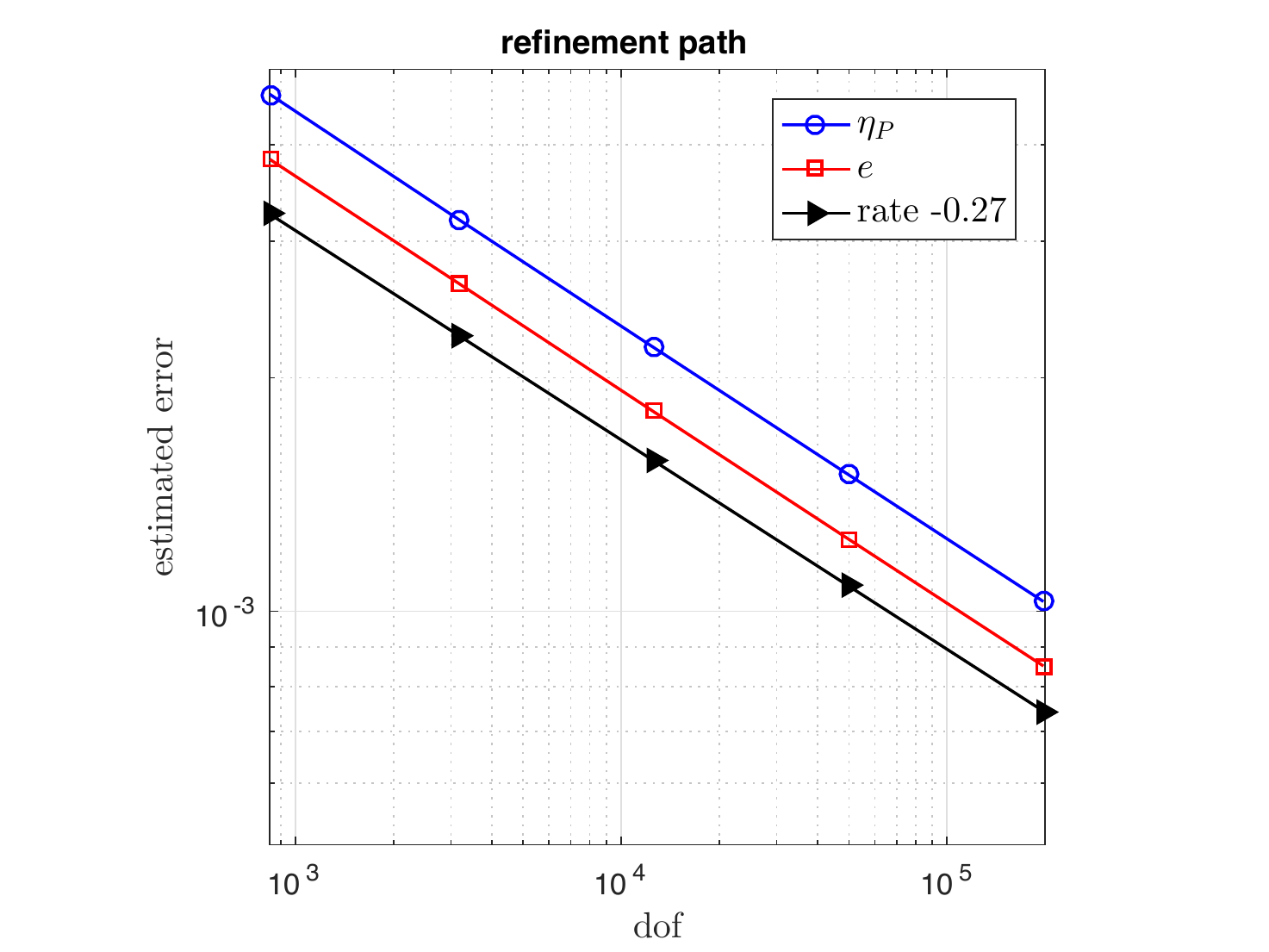}
\includegraphics[width=.4\textwidth]{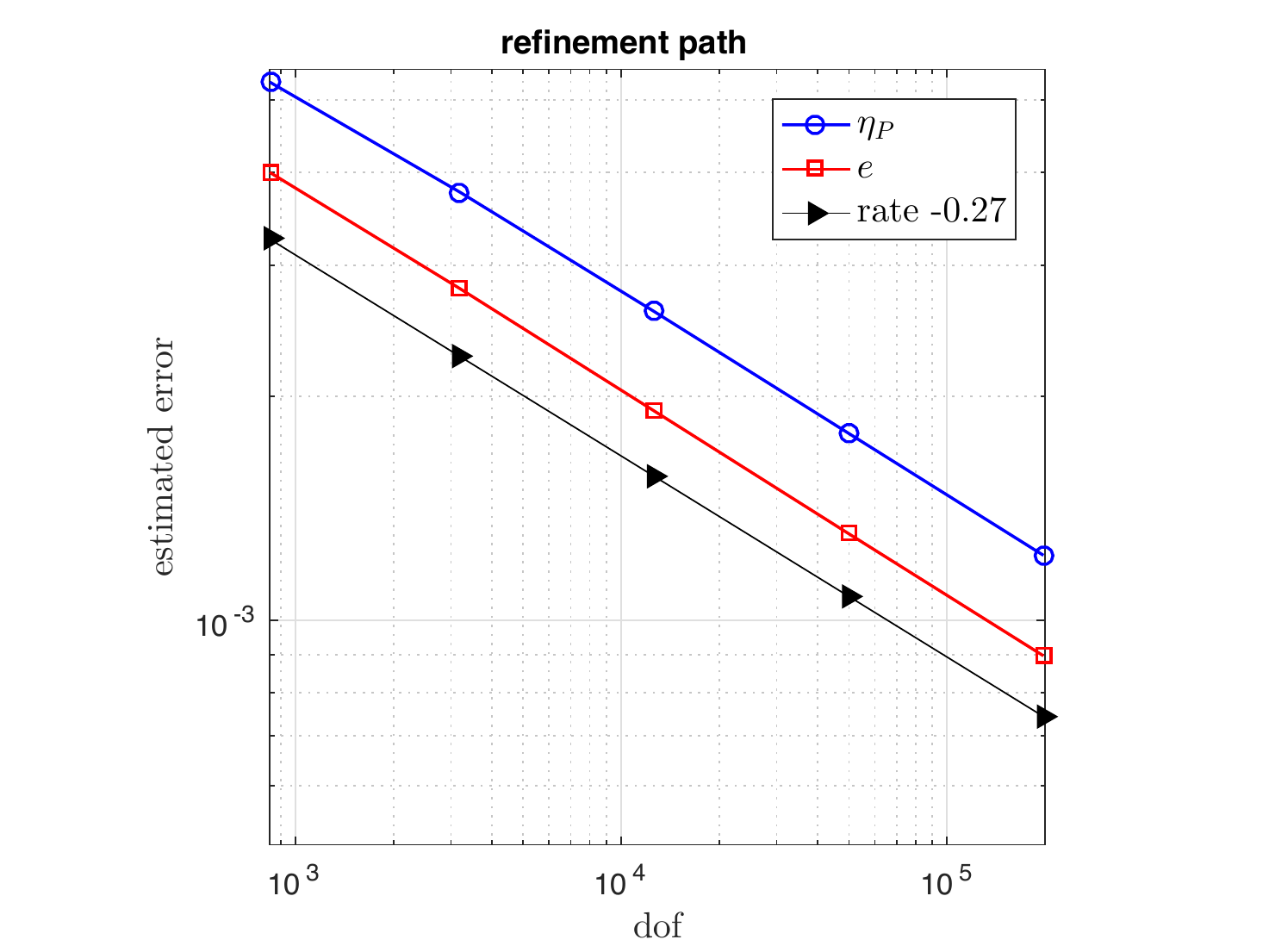}
\includegraphics[width=.4\textwidth]{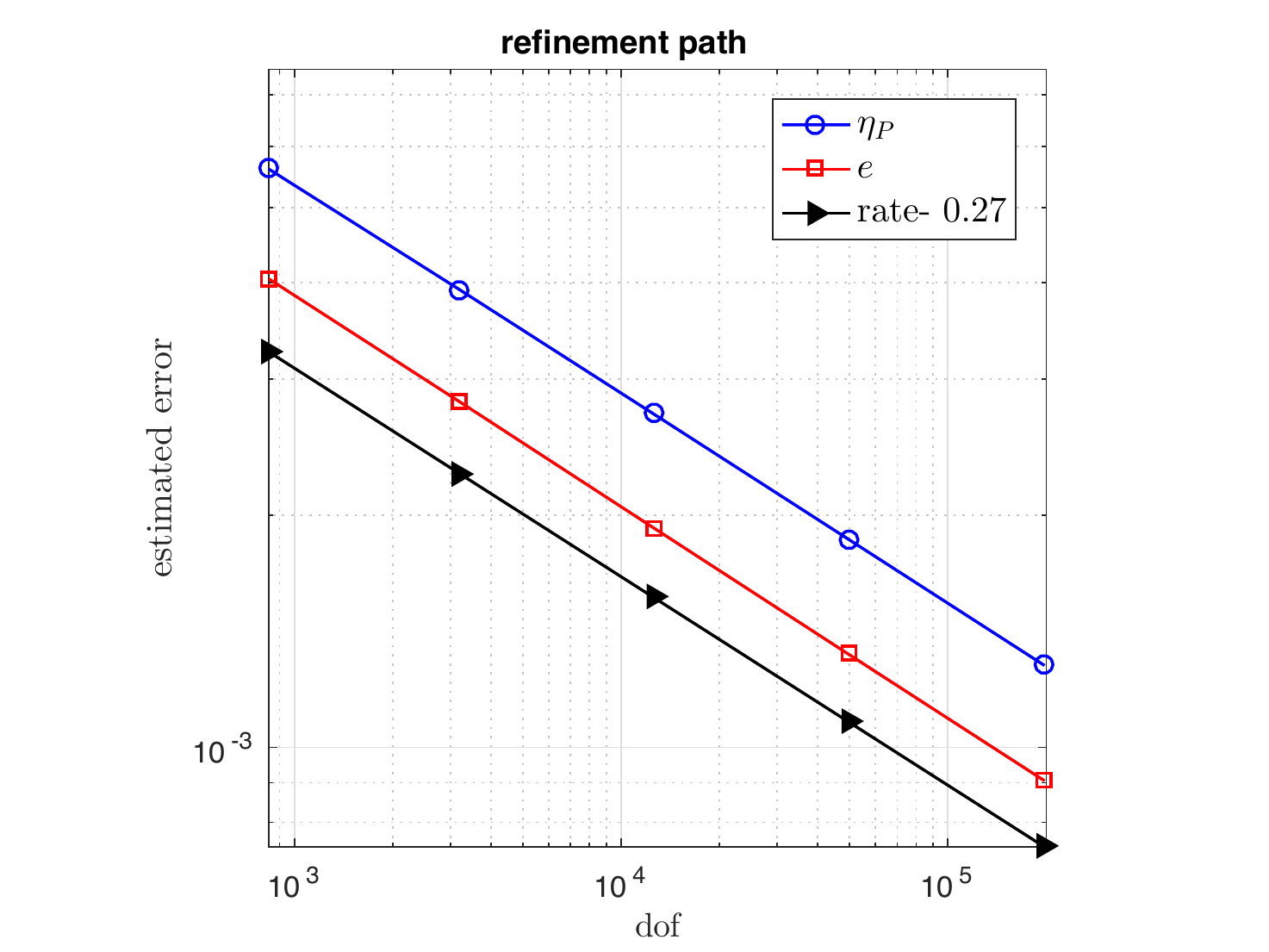}
\includegraphics[width=.4\textwidth]{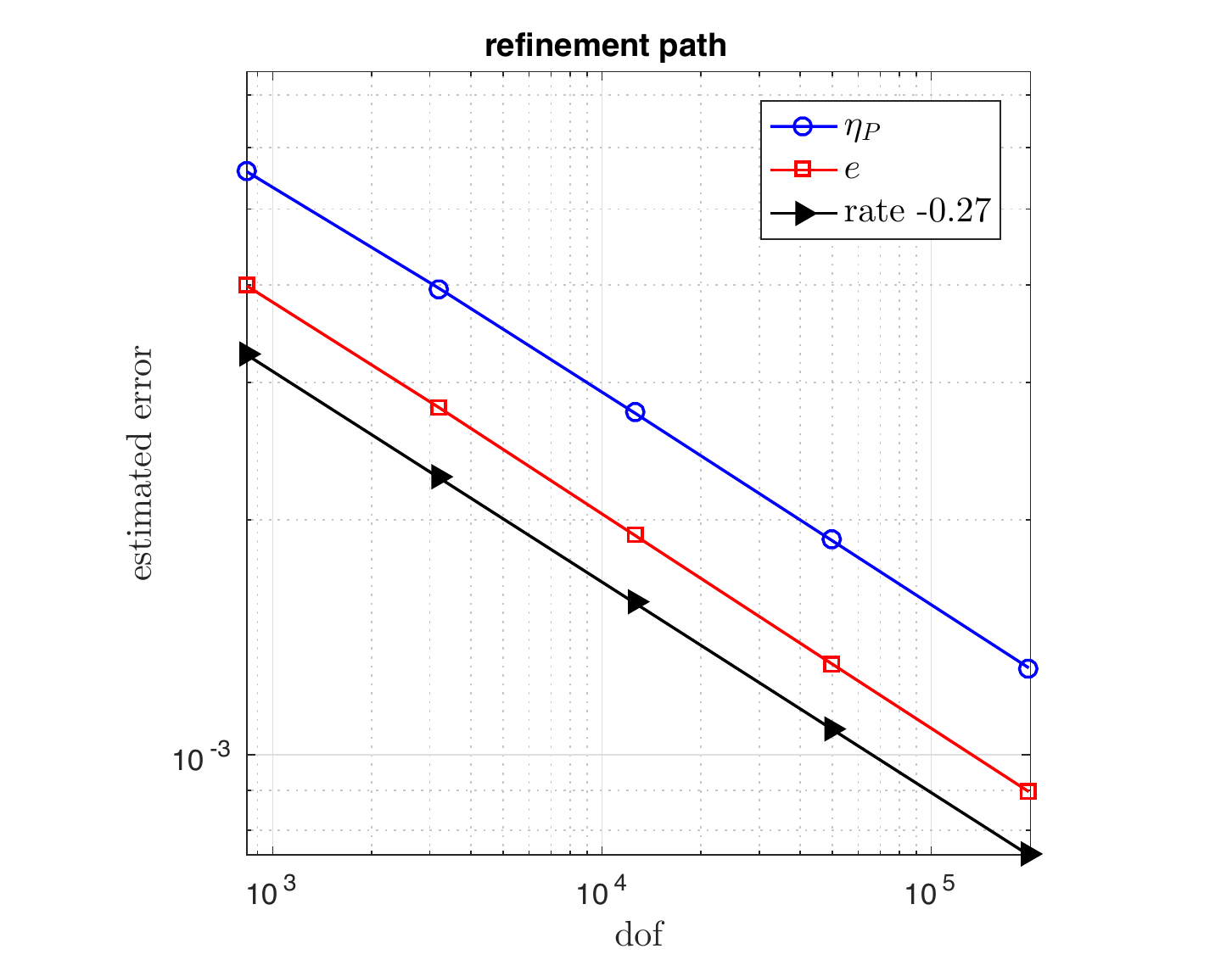}
\label{LshapeE105nu49999hydro}
\caption
{Estimated energy errors (using $\eta_{P}$) computed with \emph{uniform} meshes \rbl{for}
 Herrmann (top) and Hydrostatic (bottom) formulations of test problem 3 with $\nu=0.4$ (left);  $\nu=0.49999$ (right).}
\label{LshapeE105uni}
%\end{figure}

%\begin{figure}[!th]
%\centering
\includegraphics[width=.42\textwidth]{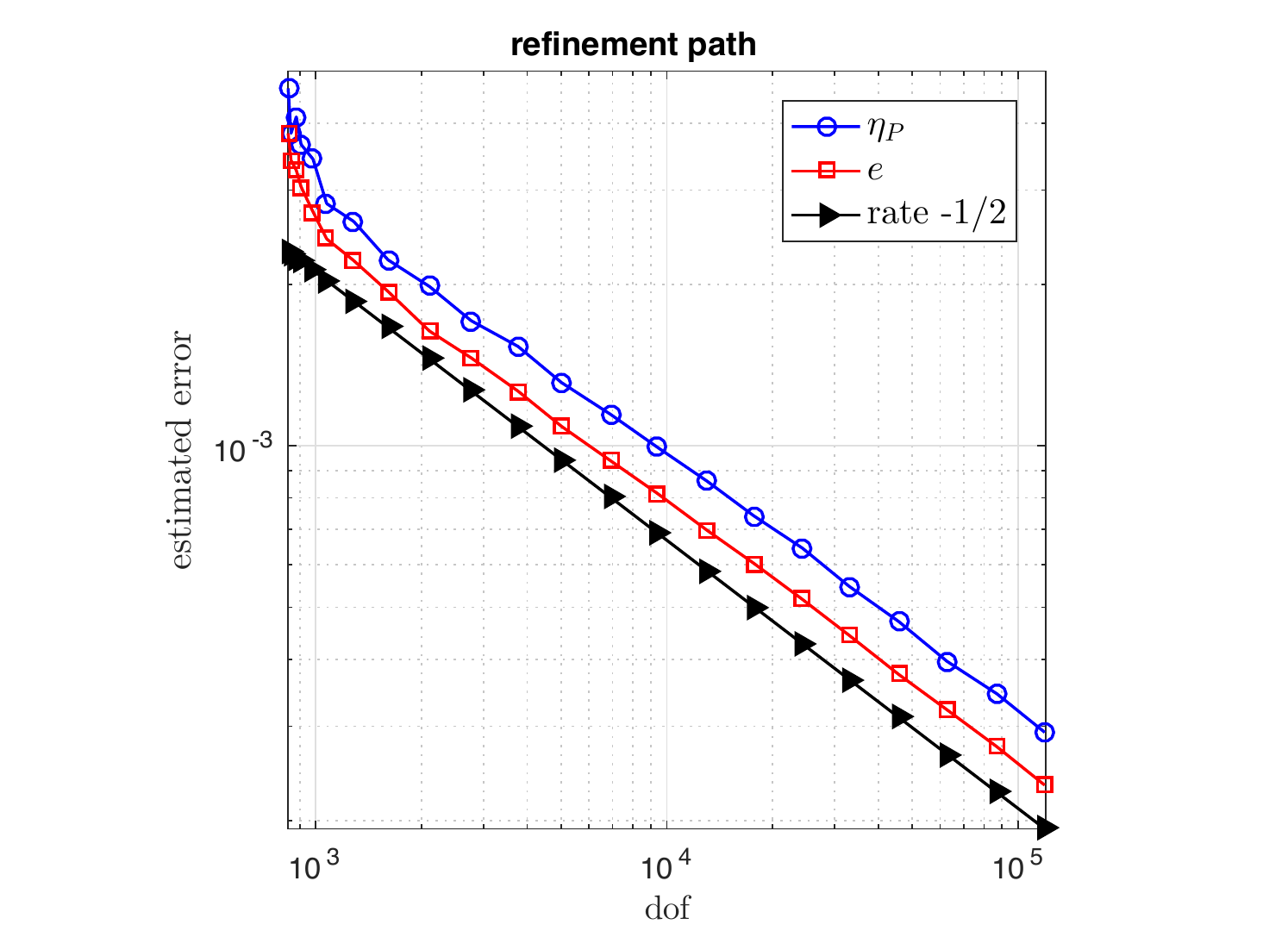}
\includegraphics[width=.42\textwidth]{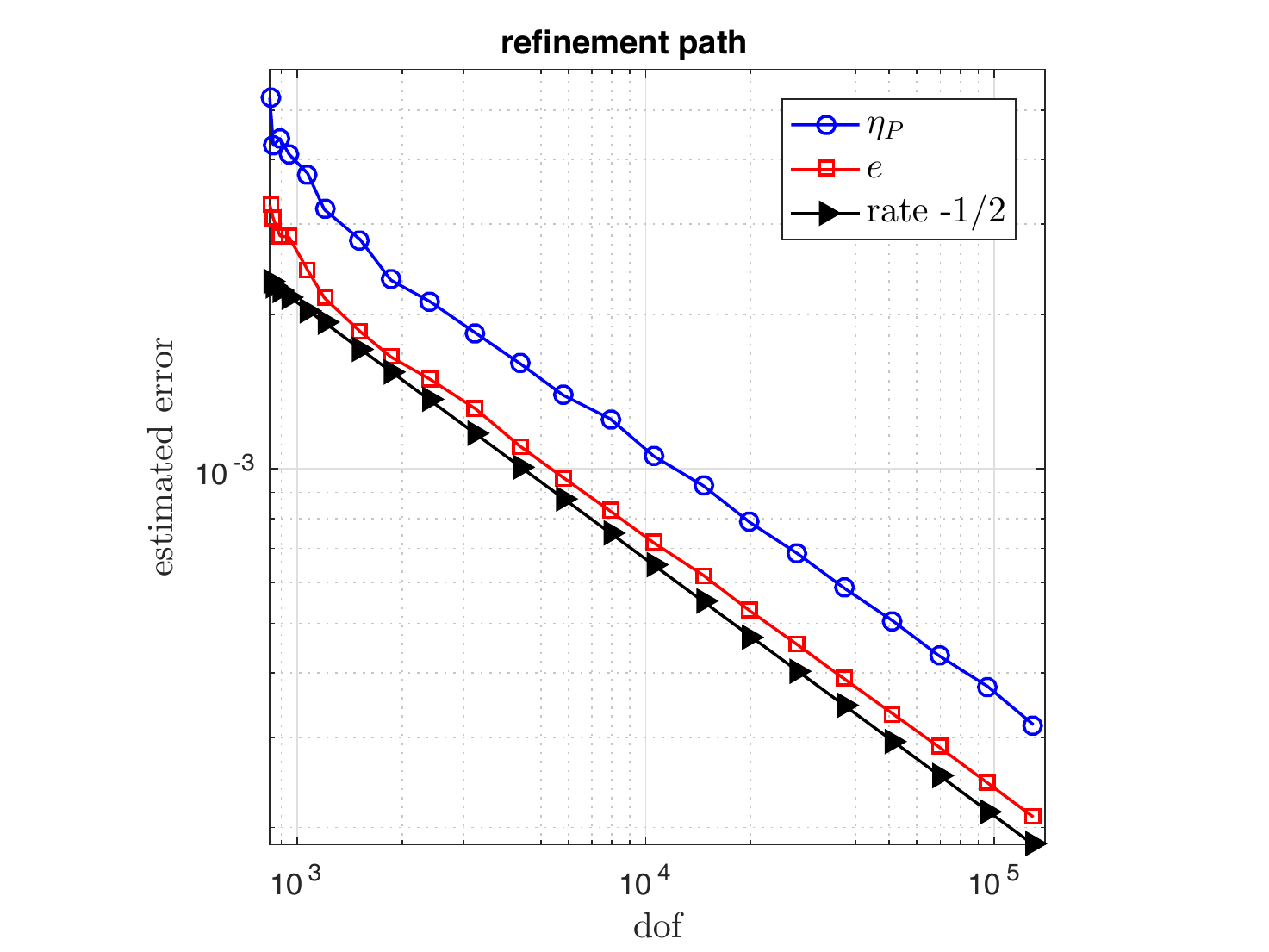}
\includegraphics[width=.42\textwidth]{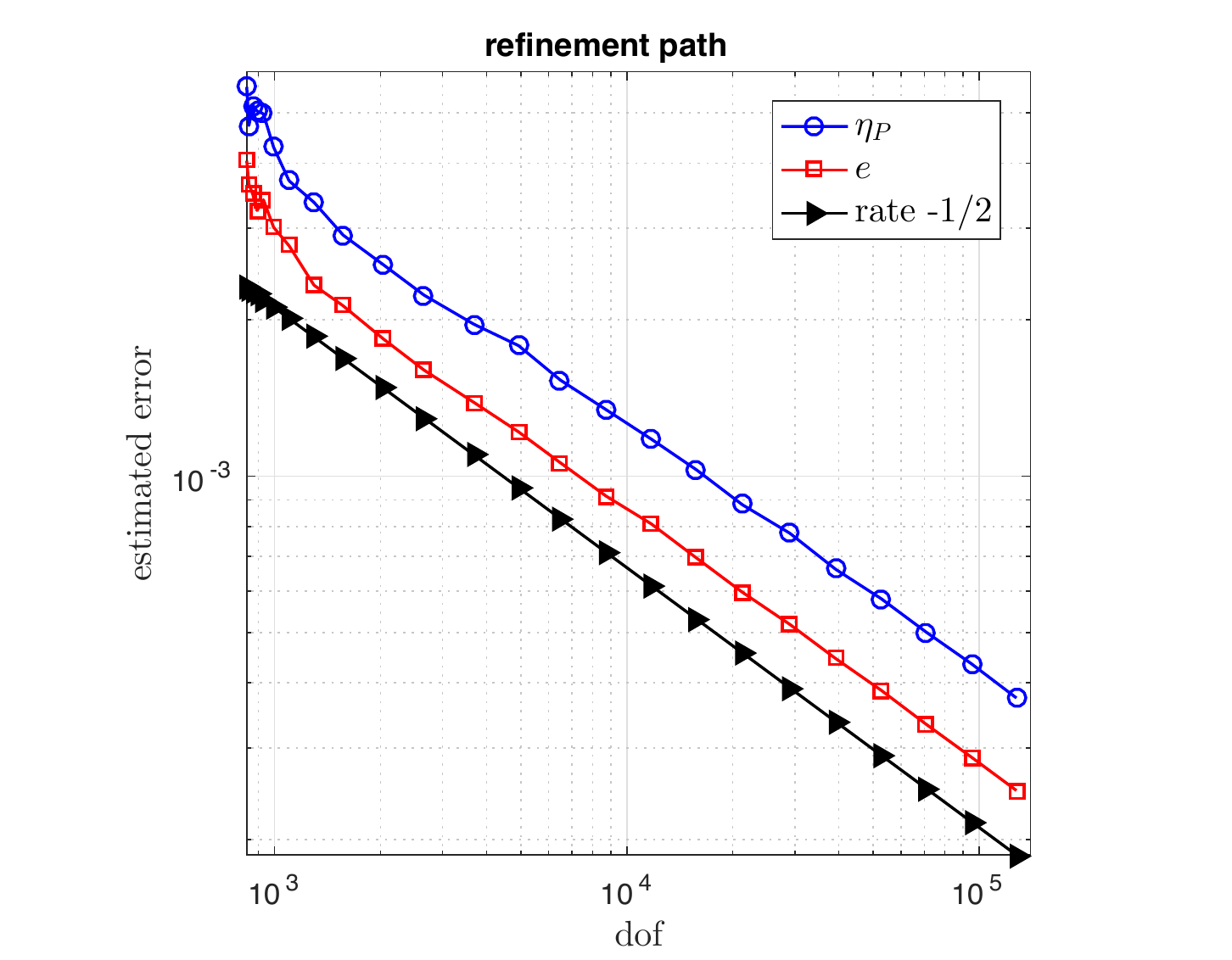}
\includegraphics[width=.42\textwidth]{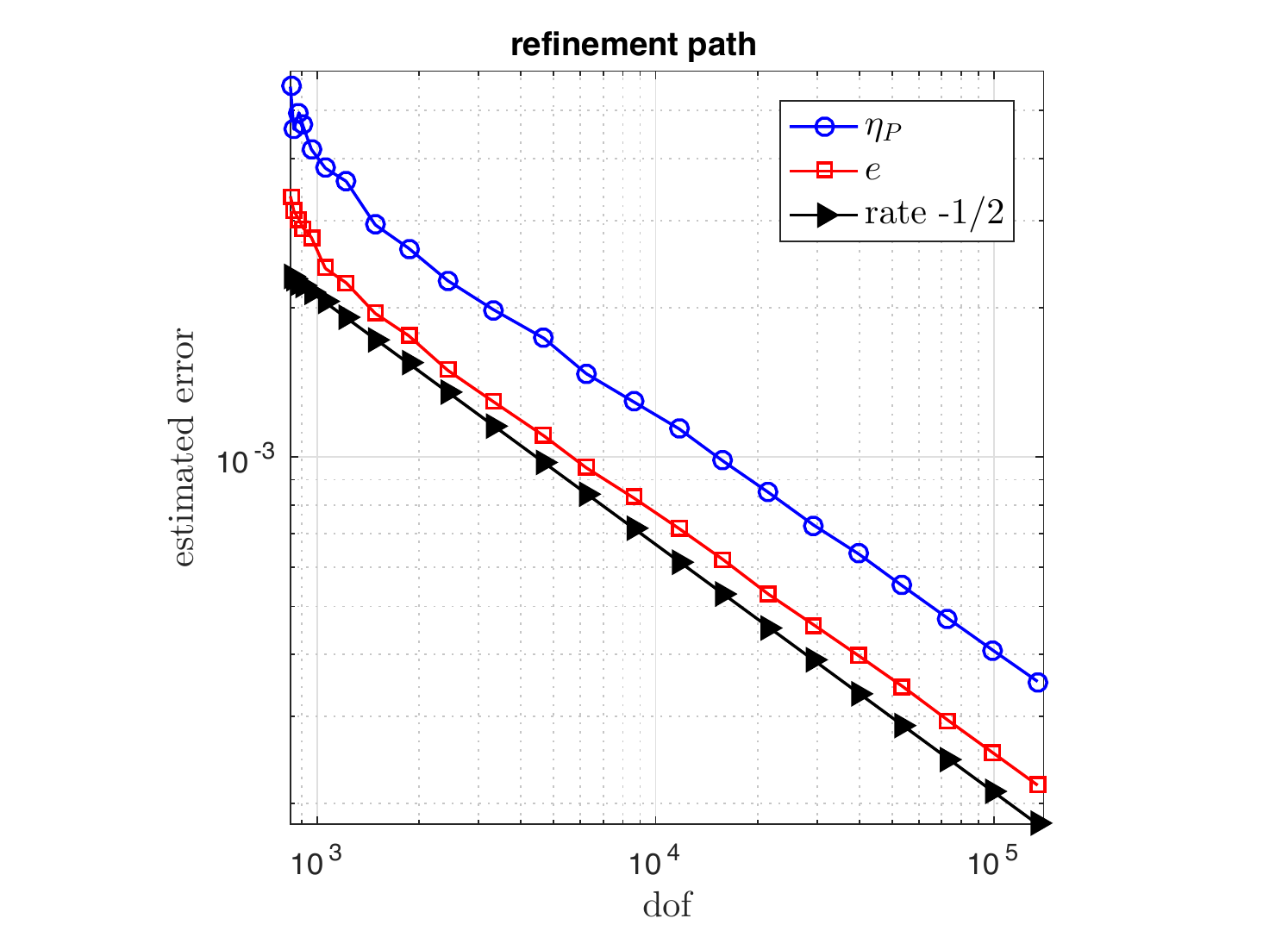}
\caption
{Estimated energy errors (using $\eta_{P}$) computed with \emph{adaptive} meshes \rbl{for}
 Herrmann (top) and Hydrostatic (bottom) formulations of test problem 3 with $\nu=0.4$ (left);  $\nu=0.49999$ (right).}
\label{LshapeE105ada}
\end{figure}

%\begin{figure}[!th]
%\centering
%\includegraphics[width=.35\textwidth]{./P1P0_E1000000nu4_Pesteffada}
%\label{LshapeE105nu4effada}
%\centering
%\includegraphics[width=.35\textwidth]{./P1P0_E1000000nu49999_Pesterreffada}
%\label{LshapeE105nu49999effada}
%\centering
%\includegraphics[width=.35\textwidth]{./P1P0_E1000000nu4_Pesteffhydroada}
%\label{LshapeE105nu4effhydroada}
%\centering
%\includegraphics[width=.35\textwidth]{./P1P0_E1000000nu49999_Pesteffhydroada}
%\label{LshapeE105nu49999effhydroada}
%\caption
%{Effectivity indices for $\eta_{P}$ computed using adaptive meshes for the Herrmann (left) and Hydrostatic (right) formulations of test problem 3, with $\nu=0.4$ (left) and  $\nu=0.49999$ (right).}
%\label{LshapeE105adaeff}
%\end{figure}

%\begin{figure}[!th]
%\centering
%\includegraphics[width=.35\textwidth]{./P1P0_E1000000nu4_Pestmesh13042}
%\label{LshapeE105nu4meshada}
%\centering
%\includegraphics[width=.48\textwidth]{./P1P0_E1000000nu49999_Pestmesh10538}
%\label{LshapeE105nu49999meshada}
%\centering
%\includegraphics[width=.35\textwidth]{./P1P0_E1000000nu4_Pestmesh11735hydro}
%\label{LshapeE105nu4meshhydroada}
%\centering
%\includegraphics[width=.50\textwidth]{./P1P0_E1000000nu49999_Pestmesh11732hydro}
%\label{LshapeE105nu49999meshhydroada}
%\caption
%{Adaptive meshes generated using $\eta_{P}$ for the Herrmann (left) and Hydrostatic (right) formulations of test problem 3 with $\nu=0.49999$. $N_{\ell}=10,538$ (left) and $N_{\ell}=11,732$ (right).}
%\label{LshapeE105adamesh}
%\end{figure}

%\vfill\eject
\section{\rbl{Concluding remarks}}\label{conclusions}
\rbl{There are two important contributions in this paper.
First, we have developed a low-order mixed finite element method 
for computing locking-free approximations of linear elasticity problems. The method is computationally cheap 
and challenges the conventional wisdom that it is necessary to start from an inf-sup stable pair of 
finite element spaces. The stabilisation term is weighted by the  problem specific factor {of $1/2\mu$} 
but is otherwise parameter-free.   {Our} a priori error analysis shows that the method provides a 
robust approximation of the energy error. 
That is, the constants in the error bounds do not depend on the Lam\'e coefficients. 
Second, we have described a practical error estimation strategy---based on
solving uncoupled Poisson problems for each displacement component---that
give effectivity indices that are close to unity in all cases that have been tested.
Ensuring robustness in the error estimation process is fundamentally important
when solving problems with large variability in the measurement of material
parameters. Extending this work  to enable the adaptive solution of elasticity problems
with {\it uncertain} material parameters is  the subject of ongoing research.}

%First, we presented a stabilised $\bm{P}_1$-$P_0$ mixed finite element method for computing locking-free approximations of linear elasticity problems. The method is computationally cheap and challenges the conventional wisdom that it is necessary to start from an inf-sup stable pair of finite element spaces. The suggested stabilisation is weighted by the factor $1/2\mu$ (which is problem-specific) and is otherwise parameter-free.  Second, we presented a priori error analysis showing that the method provides a robust approximation of the energy error. That is, the constants in the error bounds do not depend on the Lam\'e coefficients. Third, we identified a practical a posteriori error estimation strategy---based on solving uncoupled Poisson problems for each displacement component---\rrd{that yields effectivity indices that are close to unity in all cases that have been tested}. Computational experiments show that the method outperforms the standard residual-based error estimation approach.  Our a posteriori analysis also proves that both the residual and local Poisson problem estimators yield efficient and reliable estimates of the energy error, even when working  arbitrarily close to the incompressible limit. Finally, we demonstrated that it is possible to use the local Poisson problem estimator to enable the efficient adaptive solution of linear elasticity problems.

%----------------------------------------- end of text
%\newpage
\bibliographystyle{plain}
\bibliography{kps}

\begin{thebibliography}{10}

\bibitem{MJ}
Mark Ainsworth and J.~Tinsley Oden.
\newblock {\em A Posteriori Error Estimation in Finite Element Analysis}.
\newblock Wiley, 2000.

\bibitem{BV}
Gabriel Barrenechea and Fr{\' e}d{\' e}ric Valentin.
\newblock Consistent local projection stabilized finite element methods.
\newblock {\em SIAM J. Numer. Anal}, 48(5):1801--1825, 2010.

\bibitem{DFM}
Daniele Boffi, Franco Brezzi, and Michel Fortin.
\newblock {\em Mixed Finite Element Methods and Applications}.
\newblock Springer, Heidelberg, 2013.

\bibitem{DR}
Daniele Boffi and Rolf Stenberg.
\newblock A remark on finite element schemes for nearly incompressible
  elasticity.
\newblock {\em Computers and Mathematics with Applications}, 74(9):2047--2055,
  2017.

\bibitem{BF}
Erik Burman and Miguel Fern{\' a}ndez.
\newblock Galerkin finite element methods with symmetric pressure stabilization
  for the transient {S}tokes equations: stability and convergence analysis.
\newblock {\em SIAM J. Numer. Anal}, 47(1):409--439, 2008.

\bibitem{CJ}
Carsten Carstensen and Joscha Gedicke.
\newblock Robust residual-based a posteriori {A}rnold--{W}inther mixed finite
  element analysis in elasticity.
\newblock {\em Comput. Methods Appl. Mech. Engrg}, 300:245--264, 2016.

\bibitem{CLA}
P.~Cl\'{e}ment.
\newblock Approximation by finite element functions using local regularization.
\newblock {\em R.A.I.R.O. Anal. Num\'{e}r.}, 2:77--84, 1975.

\bibitem{CP}
Clark Dohrmann and Pavel Bochev.
\newblock A stabilized finite element method for the {S}tokes problem based on
  polynomial pressure projections.
\newblock {\em Int. J. Numer. Meth. Fluids}, 46:183--201, 2004.

\bibitem{Doerfler}
Willy D\"orfler.
\newblock A convergent adaptive algorithm for {P}oisson's equation.
\newblock {\em SIAM J. Numer. Anal.}, 33(3):1106--1124, 1996.

\bibitem{HDA}
Howard Elman, David Silvester, and Andy Wathen.
\newblock {\em Finite Elements and Fast Iterative Solvers: with Applications in
  Incompressible Fluid Dynamics}.
\newblock Oxford University Press, Oxford, UK, 2014.
\newblock Second Edition, xiv+400 pp. ISBN: 978-0-19-967880-8.

\bibitem{RLH}
Leonard~R. Herrmann.
\newblock Elasticity equations for incompressible and nearly incompressible
  materials by a variational theorem.
\newblock {\em AIAA J.}, 3:1896--1900, 1965.

\bibitem{PDT}
Paul Houston, Dominik Sch{\" o}tzau, and Thomas~P. Wihler.
\newblock An hp-adaptive mixed discontinuous {G}alerkin {FEM} for nearly
  incompressible linear elasticity.
\newblock {\em Comput. Methods Appl. Mech. Engrg}, 195:3224--3246, 2006.

\bibitem{TJH}
Thomas J.~R. Hughes.
\newblock {\em The Finite Element Method}.
\newblock Prentice-Hall, New Jersey, 1987.

\bibitem{DD}
David Kay and David Silvester.
\newblock A posteriori error estimation for stabilized mixed approximations of
  the {S}tokes equations.
\newblock {\em SIAM J. Sci. Comput.}, 21:1321--1336, 1999.

\bibitem{ks92}
Nasserdine Kechkar and David Silvester.
\newblock Analysis of locally stabilized mixed finite element methods for the
  {S}tokes problem.
\newblock {\em Math. Comput.}, 58:1--10, 1992.

\bibitem{KPS}
A.~{Khan}, C.~E. {Powell}, and D.~J. {Silvester}.
\newblock {Robust a posteriori error estimators for mixed approximation of
  nearly incompressible elasticity}.
\newblock {\em arXiv eprint}, October 2017.
\newblock {\tt https://arxiv.org/abs/1710.03328}.

\bibitem{KS}
Reijo Kouhia and Rolf Stenberg.
\newblock A linear nonconforming finite element method for nearly
  incompressible elasticity and {S}tokes flow.
\newblock {\em Comput. Methods Appl. Mech. Engrg}, 124(3):195--212, 1995.

\bibitem{ns98}
Sean Norburn and David Silvester.
\newblock Stable vs. stabilised mixed methods for incompressible flow.
\newblock {\em Comput. Methods Appl. Mech. Engrg}, 166:131--141, 1998.

\bibitem{TIFISS}
David Silvester, Alex Bespalov, Qifeng Liao, and Leonardo Rocchi.
\newblock Triangular {IFISS} ({TIFISS}) version 1.1., March 2017.
\newblock {\tt http://www.manchester.ac.uk/ifiss/tifiss}.

\bibitem{RV}
Rudiger Verf{\" u}rth.
\newblock {\em A Posteriori Error Estimation Techniques for Finite Element
  Methods}.
\newblock Oxford University Press, Oxford, 2013.

\bibitem{wihler2004locking}
Thomas~P Wihler.
\newblock Locking-free {DGFEM} for elasticity problems in polygons.
\newblock {\em IMA J. Numer. Anal.}, 24(1):45--75, 2004.

\end{thebibliography}

\end{document}